\newtheorem{theorem}{Theorem}[section]
\newtheorem{lemma}[theorem]{Lemma}
\newtheorem{proposition}[theorem]{Proposition}
\newtheorem{corollary}[theorem]{Corollary}
\newtheorem{definition}{Definition}
\definecolor{Red}{rgb}{1,0,0}
\definecolor{Blue}{rgb}{0,0,1}
\definecolor{Olive}{rgb}{0.41,0.55,0.13}
\definecolor{Green}{rgb}{0,1,0}
\definecolor{MGreen}{rgb}{0,0.8,0}
\definecolor{DGreen}{rgb}{0,0.55,0}
\definecolor{Yellow}{rgb}{1,1,0}
\definecolor{Cyan}{rgb}{0,1,1}
\definecolor{Magenta}{rgb}{1,0,1}
\definecolor{Orange}{rgb}{1,.5,0}
\definecolor{Violet}{rgb}{.5,0,.5}
\definecolor{Purple}{rgb}{.75,0,.25}
\definecolor{Brown}{rgb}{.75,.5,.25}
\definecolor{Grey}{rgb}{.5,.5,.5}
\definecolor{Pink}{rgb}{1,0,1}
\definecolor{DBrown}{rgb}{.5,.34,.16}
\definecolor{Black}{rgb}{0,0,0}
\def\projpar{{\rm P}_{\parallel}}
\def\projperp{{\rm P}_{\perp}}
\def\Apar{A_{\parallel}}
\def\Aperp{A_{\perp}}
\def\im{{\rm im}}
\def\cP{{\cal P}}
\def\tA{\tilde{A}}
\def\normal{{\sf N}}
\def\ve{\varepsilon}
\def\prob{{\mathbb P}}
\def\Sco{\overline{S}}
\def\ind{{\mathbb I}}
\def\hx{\widehat{x}}
\def\argmin{{\rm argmin}}
\def\seF{{\sf F}}
\def\cost{{\cal C}}
\def\sign{\mbox{\rm sign}}
\def\subg{\mbox{\rm sg}}
\def\smaxA{\sigma_{\max}(A)}
\def\la{\lambda}
\def\supp{{\rm supp}}
\def\eps{\varepsilon}
\def\dxparal{r^{\parallel}}
\def\MSE{{\rm MSE}}
\def\reals{{\mathbb R}}
\def\naturals{{\mathbb N}}
\def\<{\langle}
\def\>{\rangle}
\def\E{{\mathbb E}}
\def\identity{{\mathbf I}}
\def\finite{{\sf B}}
\def\AMP{{\textrm{\rm AMP}}}
\def\onsager{\omega}
\def\tauinf{{\tau_*}}
\def\de{{\rm d}}
\def\LASSO{{\rm LASSO}}
\def\dx{r}
\def\covz{{\sf R}}
\def\cE{{\cal E}}
\def\ra{{\sf r}}
\def\G{{\sf G}}
\def\cA{{\cal A}}
\def\ed{\stackrel{{\rm d}}{=}}
\def\tZ{\tilde{Z}}
\def\tcovz{\tilde{\covz}}
\def\va{\vec{a}}
\def\sX{{\sf X}}
\newcommand{\empr}{\hat{p}}
\newcommand{\tQ}{\tilde{Q}}
\newcommand{\f}{\frac}
\newcommand{\pl}{\parallel}
\newcommand{\deq}{\stackrel{\text{\rm d}}{=}}
\newcommand{\sigal}{\mathfrak}
\newcommand{\almostsurely}{{\rm a.s.}}
\newcommand{\asequal}{\stackrel{\almostsurely}{=}}
\newcommand{\pW}{p_{W}}
\newcommand{\eprooft}{\hfill$\Box$}
\newcommand{\order}{\vec{o}}
\newcommand{\tM}{\tilde{M}}
\def\lbq{\rho}
\def\lbm{\varsigma}
\def\hsigmamin{\hat{\sigma}_{\rm min}}
\begin{document}

\title{The LASSO risk for gaussian matrices}

\author{Mohsen Bayati\thanks{Department of Electrical Engineering,
Stanford University} \;\;\; and\;\;\;
Andrea Montanari${}^{*,}$\thanks{Department of Statistics,
Stanford University}}

\date{}

\maketitle

%\vspace{-1cm}
\begin{abstract}
We consider the problem of learning a coefficient vector $x_0\in\reals^N$
from noisy linear observation $y=Ax_0+w\in\reals^n$. In many contexts
(ranging from model selection to image processing)  it is desirable
to construct a sparse estimator $\hx$. In this case, a popular
approach consists in solving an $\ell_1$-penalized least squares problem
known as the LASSO or Basis Pursuit DeNoising (BPDN).

For sequences of matrices $A$ of increasing dimensions, with
independent gaussian entries, we prove that the normalized
risk of the LASSO converges to a limit, and we obtain
an explicit expression for this limit.
Our result is the first rigorous
derivation of an explicit formula for the asymptotic
mean squared error of the LASSO for random instances.
The proof technique is based on the analysis of $\AMP$, a recently
developed efficient algorithm, that is inspired from graphical models ideas.

Simulations on real data matrices suggest that our results can be
relevant in a broad array of practical applications.
\end{abstract}
%
%**************************************************************
%
\section{Introduction}\label{sec:intro}

Let $x_0\in\reals^N$ be an unknown vector, and assume that
a vector $y\in\reals^n$ of noisy linear measurements of $x_0$ is available.
The problem of reconstructing $x_0$ from such measurements
arises in a number of disciplines, ranging from statistical learning
to signal processing.
In many contexts the measurements are modeled by
\begin{eqnarray}
y = Ax_0+w\, ,
\end{eqnarray}
where $A\in\reals^{n\times N}$ is a known measurement matrix,
and $w$ is a noise vector.

The LASSO or Basis Pursuit Denoising (BPDN) is a method
for reconstructing the unknown vector $x_0$ given $y$, $A$,
and is particularly useful when one seeks sparse solutions.
For given $A$, $y$, one  considers the cost functions
$\cost_{A,y}:\reals^{N}\to\reals$ defined by
\begin{eqnarray}
\cost_{A,y}(x) = \frac{1}{2}\,\|y-Ax\|^2 + \,\lambda \|x\|_1\, ,\label{eq:LASSO-Problem}
\end{eqnarray}
with $\lambda>0$.
The original signal is estimated by
\begin{eqnarray}
\hx(\lambda;A,y) = \argmin_x\, \cost_{A,y}(x)\, .\label{eq:LassoOPT}
\end{eqnarray}
In what follows we shall often omit the arguments $A,y$
(and occasionally $\lambda$) from the above
notations. We will also use $\hx(\lambda;N)$ to emphasize the $N$-dependence.
Further $\|v\|_p \equiv (\sum_{i=1}^mv_i^p)^{1/p}$
denotes the $\ell_p$-norm of a vector $v\in \reals^m$
(the subscript $p$ will often be omitted if $p=2$).

A large and rapidly growing literature is devoted to
developing fast algorithms for solving the optimization
problem (\ref{eq:LassoOPT}) and characterizing the performances
and optimality of the estimator $\hx$. We refer to Section \ref{sec:Related} for an unavoidably incomplete overview.

Despite such substantial effort, and many remarkable achievements,
our understanding of (\ref{eq:LassoOPT}) is not even comparable to
the one we have of more classical topics in statistics and estimation
theory. For instance, the best bound on the mean squared error ($\MSE$)
of the estimator (\ref{eq:LassoOPT}), i.e. on
the quantity $N^{-1}\|\hx-x_0\|^2$, was proved by  Candes, Romberg and Tao
\cite{CandesStable}
(who in fact did not consider the LASSO but a related optimization problem).
Their result estimates the
mean squared error only up to an unknown numerical multiplicative factor.
Work by Candes and Tao \cite{Dantzig} on the analogous
\emph{Dantzig selector}, upper bounds
the mean squared error up to a factor $C\log N$,
under somewhat different assumptions.

The objective of this paper is to complement this type of
`rough but robust' bounds by proving
\emph{asymptotically exact} expressions for the mean square
error. Our asymptotic result holds almost
surely for sequences of random matrices $A$ with fixed aspect
ratio and independent gaussian entries. While this setting
is admittedly specific, the careful study of such matrix ensembles
has a long tradition both in statistics and communications theory and
has spurred many insights \cite{JohnstoneICM,Telatar}.
Further, we carried out simulations on real data matrices
with continuous entries (gene expression data) and binary feature matrices
(hospital medical records). The results appear to be quite encouraging.

Although our rigorous results
are asymptotic in the problem dimensions,
numerical simulations have shown that they are accurate
already on problems with a few hundreds of variables.
Further, they seem to enjoy a remarkable \emph{universality}
property and to hold for a fairly broad family of matrices
\cite{NSPT}.
Both these phenomena are analogous to ones in random matrix theory,
where delicate asymptotic properties of gaussian ensembles
were subsequently proved to hold for much broader classes of random
matrices. Also, asymptotic statements
in random matrix theory have been replaced over time
by concrete probability bounds in finite dimensions.
Of course the optimization problem (\ref{eq:LASSO-Problem})
is not immediately related to spectral properties of the random matrix $A$.
As a consequence, universality and non-asymptotic results in random matrix
theory cannot be directly exported to the present problem. Nevertheless,
we expect such developments to be foreseeable.

Our proofs are based on the analysis of an efficient iterative
algorithm first proposed by \cite{DMM09}, and called AMP,
for approximate message passing.
The algorithm is inspired by  belief-propagation on graphical models;
although the resulting iteration is significantly simpler
(and scales linearly in the number of nodes).
Extensive simulations \cite{NSPT} showed
that, in a number of settings, $\AMP$ performances are statistically
indistinguishable to the ones of LASSO,
while its complexity is essentially as low as the one of the simplest
greedy algorithms.

The proof technique just described is new. Earlier literature analyzes
the convex optimization problem (\ref{eq:LassoOPT}) --or similar problems--
by a clever construction of an approximate optimum, or of a dual witness.
Such constructions are largely explicit. Here instead we
prove an asymptotically exact characterization of a rather non-trivial
iterative algorithm. The algorithm is then proved to converge to the
exact optimum.
%
%**************************************************************
%
\subsection{Definitions}

In order to define the $\AMP$ algorithm,
we denote by $\eta:\reals\times\reals_+\to\reals$
the soft thresholding function
\begin{eqnarray}
\label{eq:eta-def}
\eta(x;\theta) = \left\{
\begin{array}{ll}
x-\theta & \mbox{if $x>\theta$,}\\
0 &  \mbox{if $-\theta\le x\le\theta$,}\\
x+\theta & \mbox{otherwise.}
\end{array}\right.
\end{eqnarray}
The  algorithm constructs a sequence of estimates
$x^t\in\reals^N$, and residuals $z^t\in\reals^n$,
according to the iteration
\begin{align}
x^{t+1}&=\eta(A^*z^t+x^t;\theta_t) ,\label{eq:dmm}\\
z^t &= y - Ax^t+\f{1}{\delta} z^{t-1}
\left\<\eta'(A^*z^{t-1}+x^{t-1};\theta_{t-1})\right\>\, ,\nonumber
\end{align}
initialized with $x^0=0\in \reals^N$. Here $A^*$ denotes the transpose of matrix $A$, $\delta\equiv n/N$,
and $\eta'(\,\cdot\,;\,\cdot\,)$ is the derivative of
the soft thresholding function with respect to its first argument.
Given a scalar function $f$ and a vector $u\in\reals^m$, we
let $f(u)$ denote the vector $(f(u_1),\dots,f(u_m))\in\reals^m$
obtained by applying $f$ componentwise. Finally
$\<u\> \equiv m^{-1}\sum_{i=1}^m u_i$ is the average of the vector
$u\in\reals^m$.
%For simplicity, we denote the coefficient
%$\<\eta'(A^*z^{t-1}+x^{t-1},\theta_{t-1})\>/\delta$ by $\edeta_t$.

As already mentioned, we will consider sequences of instances
of increasing sizes, along which the LASSO behavior has a non-trivial limit.
\begin{definition}
The sequence of instances $\{x_0(N), w(N), A(N)\}_{N\in\naturals}$
indexed by $N$ is said to be a \emph{converging sequence}
if
$x_0(N)\in\reals^{N}$, $w(N)\in\reals^n$, $A(N)\in\reals^{n\times N}$
with $n=n(N)$ is such that $n/N\to\delta\in(0,\infty)$,
and in addition the following conditions hold:
\begin{itemize}
\item[$(a)$] The empirical distribution of the entries of $x_0(N)$
converges weakly to a probability measure $p_{X_0}$ on $\reals$
with bounded second moment. Further
$N^{-1}\sum_{i=1}^Nx_{0,i}(N)^2\to \E_{p_{X_0}}\{X_0^{2}\}$.
\item[$(b)$] The empirical distribution of the entries of $w(N)$
converges weakly to a probability measure $p_{W}$ on $\reals$
with bounded second moment. Further
$n^{-1}\sum_{i=1}^nw_{i}(N)^2\to \E_{p_{W}}\{W^{2}\}$.
\item[$(c)$]If $\{e_i\}_{1\le i\le N}$, $e_i\in\reals^N$ denotes the standard
basis, then $\max_{i\in [N]}\|A(N)e_i\|_2$, $\min_{i\in [N]}\|A(N)e_i\|_2\to 1$,
as $N\to\infty$ where $[N]\equiv\{1,2,\ldots,N\}$.
\end{itemize}
\end{definition}
Let us stress that our proof only applies to a subclass of
converging sequences, namely for gaussian measurement matrices
$A(N)$. The notion of converging sequences is however important
since it defines a class of problem instances to which the
ideas developed below might be generalizable.
Also, while the  measurement matrices $A(N)$ will be random, the
signal $x_0(N)$, and noise vectors $w(N)$ will be deterministic.

For a converging sequence of instances,
and an arbitrary sequence of thresholds $\{\theta_t\}_{t\ge 0}$
(independent of $N$), the asymptotic behavior of the recursion
(\ref{eq:dmm}) can be characterized as follows.

Define the sequence $\{\tau_t^2\}_{t\ge 0}$
by setting  $\tau_{0}^2 =\sigma^2+\E\{X_0^2\}/\delta$
(for $X_0\sim p_{X_0}$ and $\sigma^2\equiv \E\{W^2\}$, $W\sim p_W$)
and letting, for all $t\ge 0$:
\begin{eqnarray}
\tau_{t+1}^2 & = & \seF(\tau_t^2,\theta_t)\, ,\label{eq:1-dim-SE}\\
\seF(\tau^2,\theta) &\equiv &\sigma^2+\frac{1}{\delta}\,
\E\{\,[\eta(X_0+\tau Z;\theta)-X_0]^2\}\,,
\end{eqnarray}
where $Z\sim\normal(0,1)$ is independent of $X_0$. Notice that the
function $\seF$ depends implicitly on the law $p_{X_0}$. We will see
later that the quantity $A^*z^t+x^t$ has the same distribution as
$X_0+\tau_tZ$. In other words, $\tau_t^2$ is the MSE
of the estimator $A^*z^t+x^t$ for $x_0$.

We say a function $\psi:\reals^2\to\reals$ is \emph{pseudo-Lipschitz} if there exist a constant $L>0$ such that for all $x,y\in\reals^2$: $|\psi(x)-\psi(y)|\le L(1+\|x\|_2+\|y\|_2)\|x-y\|_2$.
(This is a special case of the definition used
in \cite{BM-MPCS-2010} where such a function is called pseudo-Lipschitz
\emph{of order 2}.)

The next proposition that was conjectured in \cite{DMM09} and proved in
\cite{BM-MPCS-2010} shows that the behavior of $\AMP$ can be tracked by the
above one dimensional recursion. We
often refer to this prediction by \emph{state evolution}.

\begin{theorem}[\cite{BM-MPCS-2010}]\label{prop:state-evolution} Let $\{x_0(N), w(N), A(N)\}_{N\in\naturals}$ be a converging sequence of
instances with the entries of $A(N)$ iid normal with mean $0$ and variance
$1/n$ and let $\psi:\reals\times\reals\to \reals$ be a pseudo-Lipschitz function. Then, almost surely
\begin{eqnarray}\label{eq:state-evolution}
\lim_{N\to\infty}\frac{1}{N}\sum_{i=1}^N\psi
\big(x_{i}^{t+1},x_{0,i}\big) = \E\Big\{\psi\big(\eta(X_0+\tau_t Z;\theta_t),X_0\big)\Big\}\, ,
\end{eqnarray}
where $Z\sim\normal(0,1)$ is independent of $X_0\sim p_{X_0}$.
\end{theorem}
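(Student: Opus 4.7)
The plan is to prove Theorem 1.1 by the conditioning technique introduced by Bolthausen for the TAP equations in spin glasses and adapted to $\AMP$. The key point is that since $A$ has i.i.d.\ Gaussian entries, conditioning on any linear functional of $A$ leaves the rest of $A$ Gaussian with an explicitly modified law, which permits producing a ``fresh'' Gaussian vector at each iteration that is effectively independent of the past.

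First I would symmetrize the recursion (\ref{eq:dmm}) by introducing two coupled sequences $b^t = A q^t$ and $h^{t+1} = A^{*} m^t$, where $q^t = x^t - x_0$ and $m^t = -z^t$, together with scalar Onsager coefficients $\lambda_t, \xi_t$ that absorb the $\frac{1}{\delta} z^{t-1}\langle \eta'(\cdot)\rangle$ correction. In these variables the iteration takes the form $q^{t+1} = f_t(h^{t+1}, x_0)$ with $f_t$ a Lipschitz function (a shifted soft threshold) and similarly $m^t = g_t(b^t, w)$, giving a symmetric ``general first-order recursion'' that is amenable to induction on $t$ and for which the targeted limit statement (\ref{eq:state-evolution}) is equivalent to a statement about $h^{t+1}$.

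Second I would establish the conditional distribution lemma: conditional on the $\sigma$-algebra $\mathfrak{S}_t$ generated by $\{x_0, w, q^{0},\dots,q^{t}, m^{0},\dots,m^{t-1}\}$, the matrix $A$ is distributed as an explicit deterministic part $E_t$ (built from the past iterates via the inverses of the Gram matrices of $\{q^0,\dots,q^{t-1}\}$ and $\{m^0,\dots,m^{t-1}\}$) plus an independent Gaussian matrix $\widetilde{A}$. Applying this decomposition to $b^t$ and $h^{t+1}$ shows that, up to a deterministic shift determined by the past, each new vector is a fresh Gaussian of the correct variance plus a negligible remainder; the Onsager coefficients $\lambda_t, \xi_t$ are tuned precisely so that the deterministic shift vanishes in the $N\to\infty$ limit. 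Then I would run an induction on $t$ with hypothesis that $N^{-1}\sum_i \psi(h^{t+1}_i, x_{0,i}) \to \E\{\psi(\tau_t Z, X_0)\}$ almost surely for every pseudo-Lipschitz $\psi$; Gaussian concentration (Borel--TIS) upgrades convergence in probability to almost sure, and the induction closes by applying $\eta(\cdot;\theta_t)$ componentwise and recognizing $\seF(\tau_t^2,\theta_t)$ as the new variance.

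The main obstacle will be carrying through the induction an additional invariant, namely that the normalized Gram matrix of $\{q^0,\dots,q^{t-1}\}$ (and of $\{m^0,\dots,m^{t-1}\}$) converges to a strictly positive definite limit as $N\to\infty$. Nondegeneracy is what permits inverting these Grams in the definition of $E_t$, controls $\|E_t\|$ so that the remainder stays lower-order in the pseudo-Lipschitz sense, and prevents asymptotic collinearity between successive iterates. Proving it is itself a nontrivial consequence of the state evolution equations together with the assumption that $p_{X_0}$ is not concentrated at zero in a trivial way, and it must be proved in lock-step with the main limit, which is the subtle part of the argument.
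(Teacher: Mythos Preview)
The present paper does not prove this theorem; it is quoted from \cite{BM-MPCS-2010}, with the key technical lemma reproduced as Lemma~\ref{lem:elephant} in Appendix~\ref{sec:FromMPCS}. Your outline---symmetrize to the general recursion (\ref{eq:mpMain}), compute the conditional law of $A$ given $\sigal{S}_t$ via (\ref{eq:Conditional_A})--(\ref{eq:Et}), and run an induction on $t$ carrying along nondegeneracy of the Gram matrices of $\{q^i\}$ and $\{m^i\}$---is exactly the structure of that proof, and you have correctly identified the Gram nondegeneracy (Lemma~\ref{lem:elephant}(g)) as the delicate invariant that must be propagated in lockstep.

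One correction, however: the Onsager term does \emph{not} make the deterministic shift vanish. As Lemma~\ref{lem:elephant}(a) records, the conditional law of $h^{t+1}$ is a nontrivial linear combination $\sum_{i=0}^{t-1}\alpha_i h^{i+1}$ of all previous iterates plus the fresh Gaussian $\tA^* m^t_\perp$ plus a $\order_{t+1}(1)$ remainder. The role of the Onsager coefficient $\xi_t$ is narrower: it cancels a specific term proportional to $q^t$ that would otherwise survive with an $O(1)$ coefficient and destroy the asymptotic Gaussianity. The surviving combination $\sum_i\alpha_i h^{i+1}$ is not zero; it is absorbed by the induction hypothesis, which asserts that $(h^1,\dots,h^{t})$ is already asymptotically jointly Gaussian, so adding an independent Gaussian $\tA^* m^t_\perp$ preserves joint Gaussianity for $(h^1,\dots,h^{t+1})$. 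If you attempt the proof expecting the deterministic part to drop out, you will be stuck at the first nontrivial step.
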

In order to establish the connection with the LASSO,
a specific policy has to be chosen for the thresholds
$\{\theta_t\}_{t\ge 0}$. Throughout this paper we
will take $\theta_t = \alpha\tau_t$ with $\alpha$ is fixed.
In other words, the sequence $\{\tau_t\}_{t\ge 0}$ is
given by the recursion
\begin{eqnarray}
\tau_{t+1}^2 = \seF(\tau_t^2,\alpha\tau_t)\, .
\end{eqnarray}
This choice enjoys several convenient properties \cite{DMM09}. In
particular the sequence $\{\tau_t\}$ always converges to the largest
solution of the fixed point equation $\tau^2=\seF(\tau^2,\alpha\tau)$.
Further, it is a very natural choice from an intuitive point of view.
Consider indeed the AMP recursion (\ref{eq:dmm}). At each step we
construct a vector of `effective observations' $y^t=x^t+A^*z^t\in\reals^N$. This
can be regarded as a noisy version of the signal $x_0$, whereby each
entry of $x_0$ has been corrupted by Gaussian noise with mean $0$ and
variance $\tau^2_t$. Indeed, as
witnessed by Theorem \ref{prop:state-evolution}, $y^t$ is
asymptotically distributed as $x_0+w_t$ with $w^t\sim
\normal(0,\tau_t\identity_{N\times N})$ (this statement holds in the sense
of finite-dimensional marginals).
Hence, it is very natural to obtain a refined estimate by applying
the soft thresholding denoiser $\eta(\,\cdot\,;\theta_t)$
componentwise to $y_t$, which is exactly what happens in the first
equation in (\ref{eq:dmm}).  This denoiser shrinks component $y^t_{i}$
to $0$ if $|y^t_{i}|\le \theta_t$. The interpretation is that any
entry above $\theta_t$ is regarded as pure noise. Obviously this
suggests to choose $\theta_t$ proportional to the standard deviation
of the effective noise, $\tau_t$. This is indeed confirmed by a
careful mathematical analysis: choosing $\theta_t = \alpha\tau_t$ is
minimax optimal, for a suitable choice of the proportionality constant
$\alpha$ \cite{DJ94a,DJ98,DMM09}.

Let us finally discuss why there should be any relation at all between
the AMP algorithm (\ref{eq:dmm}) and the solution of the LASSO.
Assume that $\theta_t\to\theta$, and that  $(x,z)$ is a fixed point of
the corresponding AMP iteration. Let $\onsager =
\delta^{-1}\<\eta'(x+A^*z;\theta)\>$. Then the fixed point condition
reads
\begin{eqnarray}
x & = & \eta(x+A^*z;\theta)\, ,\\
z &=& y-Ax+\onsager\, z\, .
\end{eqnarray}
Notice that $x = \eta(r;\theta)$ if and only if there exists
$v(x)\in\partial\|x\|_1$  such that $x+\theta v(x) = r$ (here
$\partial f$ denotes the subgradient of the function $f$). It follows
that the fixed point condition can be rewritten as
\begin{eqnarray}
A^*(y-Ax) =\theta (1-\omega)\, v\, ,\;\;\;\;\;\;\;
v\in\partial\|x\|_1\, .
\end{eqnarray}
Comparing with the stationarity condition for the LASSO cost function
(\ref{eq:LASSO-Problem}) we obtain the following.
\begin{lemma}\label{lemma:Correspondence}
Any fixed point $x^t=x$ of the AMP iteration with
$\theta_t=\theta$ is a minimizer of
the LASSO cost function with
\begin{eqnarray}
\lambda = \theta\Big\{1-
\frac{1}{\delta}\<\eta'(x+A^*z;\theta)\>\Big\}\, .
\end{eqnarray}
\end{lemma}
%
%**************************************************************
%
\subsection{Main result}\label{sec:Results}

Before stating our results,
we have to describe a \emph{calibration} mapping between $\alpha$
and $\lambda$ that was introduced in \cite{NSPT}. This mapping is necessary since in the analysis of AMP $\alpha$ plays the role of $\lambda$. In other words, it can be viewed as regularization parameter and controls sparsity of AMP estimates. In particular, we will show that there exist a one-to-one (monotone) function between values of $\alpha$ and $\lambda$.

\subsubsection{Calibration between $\alpha$ and $\lambda$}
Let us start by stating some convenient properties of
the state evolution recursion.
\begin{proposition}[\cite{DMM09}]\label{propo:UniqFP}
Let $\alpha_{\rm min}= \alpha_{\rm min}(\delta)$ be the unique
non-negative solution of the equation
\begin{eqnarray}
(1+\alpha^2)\Phi(-\alpha)-\alpha\phi(\alpha) = \frac{\delta}{2}\, ,
\label{eq:AlphaMin}
\end{eqnarray}
with $\phi(z) \equiv e^{-z^2/2}/\sqrt{2\pi}$ the standard gaussian density
and $\Phi(z) \equiv\int_{-\infty}^{z}\phi(x)\,\de x$.

For any $\sigma^2>0$, $\alpha>\alpha_{\rm min}(\delta)$,
the  fixed point equation
$\tau^2 = \seF(\tau^2,\alpha\tau)$ admits a unique solution.
Denoting by $\tau_*=\tau_*(\alpha)$ this solution, we
have $\lim_{t\to\infty}\tau_t=\tau_*(\alpha)$. Further the convergence takes
place for any initial condition and is monotone. Finally
$\left|\frac{\de \seF}{\de\tau^2}(\tau^2,\alpha\tau)\right|<1$
at $\tau=\tau_*$.
\end{proposition}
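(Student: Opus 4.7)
My plan is to recast the vector fixed-point equation as a scalar equation $F(u)=u$ in $u=\tau^2$ by exploiting the scale-invariance $\eta(\tau x;\alpha\tau)=\tau\eta(x;\alpha)$, and to treat $\alpha_{\min}$ by a separate one-variable monotonicity analysis.

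For $\alpha_{\min}$, set $g(\alpha) := (1+\alpha^2)\Phi(-\alpha)-\alpha\phi(\alpha)$. Using $\phi'(z)=-z\phi(z)$, direct computation gives $g'(\alpha) = 2[\alpha\Phi(-\alpha)-\phi(\alpha)]$ and $g''(\alpha) = 2\Phi(-\alpha) > 0$. Mills' ratio ($\alpha\Phi(-\alpha) < \phi(\alpha)$ for all $\alpha \ge 0$) gives $g' < 0$; together with $g(0)=1/2$ and $g(+\infty)=0$, $g$ is a strictly decreasing bijection $[0,\infty) \to (0,1/2]$, so for each $\delta \in (0,1]$ there is a unique $\alpha_{\min}(\delta) \ge 0$ with $g(\alpha_{\min})=\delta/2$.

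For the fixed point, define $F(u) := \seF(u,\alpha\sqrt u)$; scale-invariance rewrites this as
\begin{equation*}
F(u) = \sigma^2 + \frac{u}{\delta}\,\E\big\{[\eta(X_0/\sqrt u + Z;\alpha) - X_0/\sqrt u]^2\big\}.
\end{equation*}
Hence $F(0)=\sigma^2>0$, and since $X_0/\sqrt u \to 0$ in $L^2$, we obtain $F(u)/u \to \E\{\eta(Z;\alpha)^2\}/\delta = 2g(\alpha)/\delta < 1$ for $\alpha > \alpha_{\min}$; continuity then yields at least one positive $u_*$ with $F(u_*)=u_*$. Both uniqueness and the derivative bound follow from the identity
\begin{equation*}
F(u) = \sigma^2 + uF'(u) + R(u),\qquad R(u) := \frac{1}{\delta}\E\big\{X_0^2\,\ind_{|X_0+\sqrt u Z|\le \alpha\sqrt u}\big\} \ge 0,
\end{equation*}
obtained by splitting the expectation according to whether $\eta$ fires: on $\{|X_0+\sqrt u Z|>\alpha\sqrt u\}$ one has $|\eta-X_0|^2=u[Z-\alpha\sign(X_0+\sqrt u Z)]^2$, and differentiation under the integral (with $\partial_1\eta=\ind_{|\cdot|>\theta}$ and $\partial_2\eta=-\sign(\cdot)\ind_{|\cdot|>\theta}$) identifies this piece with $uF'(u)$. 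At any positive fixed point $u_*$, the identity rearranges to $u_*(1-F'(u_*)) = \sigma^2 + R(u_*) > 0$, so $F'(u_*) \in [0,1)$; since $F'\ge 0$ as well, this is the claimed derivative bound. If there were two distinct positive fixed points $u_1 < u_2$, $F(u)-u$ would have strictly negative derivative at each, so it would be negative just right of $u_1$ and positive just left of $u_2$, producing an intermediate zero $u_3$ at which $F'(u_3) \ge 1$, contradicting the derivative bound.

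Finally, $F' \ge 0$ makes $F$ nondecreasing, and $F(u)>u$ on $(0,u_*)$, $F(u)<u$ on $(u_*,\infty)$; thus the scalar iterates $u_{t+1}=F(u_t)$ are monotone and bounded, hence converge to $u_*$, giving $\tau_t \to \tau_* = \sqrt{u_*}$ from any initial condition. The main technical obstacle is justifying the identity $F(u)=\sigma^2+uF'(u)+R(u)$: although $\eta$ is only piecewise smooth, its $1$-Lipschitz property combined with the finite-second-moment hypothesis on $X_0$ allows dominated convergence on difference quotients to interchange the derivative with the expectation.
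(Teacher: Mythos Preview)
Your proof is correct and takes a genuinely different route from the paper. The paper's argument goes through \emph{concavity}: it computes the second total derivative $\de^2\seF/\de(\tau^2)^2$ explicitly, shows it is nonpositive (strictly negative when $\alpha>0$ and $X_0\not\equiv 0$), and then uniqueness, monotone convergence, and the bound $|F'(\tau_*^2)|<1$ all follow from concavity together with the computation $\lim_{\tau^2\to\infty}F'(\tau^2)=2g(\alpha)/\delta<1$. You instead exploit the scale-invariance of $\eta$ to obtain the structural identity $F(u)=\sigma^2+uF'(u)+R(u)$ with $R\ge 0$, which immediately yields $F'(u_*)<1$ at \emph{every} fixed point without ever computing a second derivative; uniqueness then follows from the general principle that a differentiable map with only attracting fixed points can have at most one. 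Your identity is equivalent to the paper's first-derivative formula (indeed $F'(u)=\delta^{-1}\E\{(Z-\alpha\,\sign(X_0+\sqrt u Z))^2\ind_{|X_0+\sqrt u Z|>\alpha\sqrt u}\}$ matches Eq.~(\ref{eq:deF}) after integrating in $Z$), but packages it in a way that makes the role of $\sigma^2>0$ transparent. The paper's approach buys a global shape result (concavity) that is slightly stronger than needed; yours is more economical but the uniqueness step requires a touch more care---to make your sentence ``producing an intermediate zero $u_3$ at which $F'(u_3)\ge 1$'' airtight, take $u_3$ to be the \emph{smallest} zero of $F-\mathrm{id}$ in $(u_1,u_2)$, so that $F-\mathrm{id}<0$ on $(u_1,u_3)$ forces the one-sided derivative at $u_3$ to be nonnegative.
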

For greater convenience of the reader, a proof of this statement is
provided in Appendix \ref{app:UniqFP}.

We then define the function $\alpha\mapsto \lambda(\alpha)$
on $(\alpha_{\rm min}(\delta),\infty)$, by
\begin{eqnarray}
\lambda(\alpha) \equiv \alpha\tauinf\left[1 - \frac{1}{\delta}
\E\big\{\eta'(X_0+\tauinf Z;\alpha\tauinf)\big\}\right]\, .\label{eq:calibration}
\end{eqnarray}
This function defines a correspondence (calibration) between the threshold $\alpha\tau_*$ and the regularization parameter
$\lambda$. It should be intuitively clear that larger $\lambda$
corresponds to larger thresholds and hence larger $\alpha$
since both cases yield smaller estimates of $x_0$.
The specific choice in Eq.~(\ref{eq:calibration}) is motivated by
Lemma \ref{lemma:Correspondence}.

In the following we will need to invert this function.
We thus define $\alpha:(0,\infty)\to(\alpha_{\rm min},\infty)$
in such a way that
\begin{eqnarray}
\alpha(\lambda) \in \big\{\, a\in (\alpha_{\rm min},\infty)\, :\,
\lambda(a) =\lambda\big\}\, .\label{eq:AlphaOfLambda}
\end{eqnarray}
The next result implies that the set on the right-hand side
is non-empty and therefore the function $\lambda\mapsto\alpha(\lambda)$
is well defined.
\begin{proposition}[\cite{NSPT}]\label{propo:Lambda}
The function  $\alpha\mapsto\lambda(\alpha)$ is
continuous on the interval
$(\alpha_{\rm min},\infty)$ with $\lambda(\alpha_{\rm min}+)= -\infty$
and $\lim_{\alpha\to\infty}\lambda(\alpha) = \infty$.

Therefore the function $\lambda\mapsto\alpha(\lambda)$
satisfying Eq.~(\ref{eq:AlphaOfLambda}) exists.
\end{proposition}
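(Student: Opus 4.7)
The plan is to establish continuity of $\alpha\mapsto\tau_*(\alpha)$ first, then deduce continuity of $\lambda(\alpha)$ as a composition of smooth operations, and finally analyze the two boundary limits; existence of $\alpha(\lambda)$ then follows from the intermediate value theorem.

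First I would prove that $\tau_*(\alpha)$ is continuous on $(\alpha_{\min},\infty)$. Define $G(\tau^2,\alpha)=\seF(\tau^2,\alpha\tau)-\tau^2$. By Proposition \ref{propo:UniqFP}, $G(\tau_*(\alpha)^2,\alpha)=0$ and $\partial_{\tau^2}\seF(\tau^2,\alpha\tau)|_{\tau=\tau_*}$ has absolute value strictly less than one, hence $\partial_{\tau^2}G(\tau_*^2,\alpha)\neq 0$. Differentiating under the integral in the definition of $\seF$ (which is justified by dominated convergence, since $\eta$ is $1$-Lipschitz and $X_0,W$ have bounded second moments) shows $G$ is $C^1$ jointly in $(\tau^2,\alpha)$, so the implicit function theorem gives continuity (in fact, smoothness) of $\alpha\mapsto\tau_*(\alpha)^2$. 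Continuity of $\alpha\mapsto\E\{\eta'(X_0+\tau_*Z;\alpha\tau_*)\}=\prob(|X_0+\tau_* Z|>\alpha\tau_*)$ then follows because the pair $(X_0,\tau_*Z)$ has an absolutely continuous distribution (thanks to the gaussian $Z$), so the event probability depends continuously on $(\alpha,\tau_*)$. Consequently $\lambda(\alpha)$ is continuous.

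For the limit as $\alpha\downarrow\alpha_{\min}$, the main claim is that $\tau_*(\alpha)\to\infty$ while the bracket in Eq.~(\ref{eq:calibration}) tends to a strictly \emph{negative} constant. A scaling analysis of $\seF$ for large $\tau$ gives
\begin{equation*}
\seF(\tau^2,\alpha\tau)=\sigma^2+\frac{\tau^2}{\delta}\E\{\eta(Z;\alpha)^2\}+\frac{1}{\delta}\E\{X_0^2\}\,\prob(|Z|\le\alpha)+o(\tau^2),
\end{equation*}
so the fixed-point equation forces $\tau_*^2\sim[\sigma^2+\text{const}]/[1-\delta^{-1}\E\{\eta(Z;\alpha)^2\}]$, and the denominator vanishes as $\alpha\downarrow\alpha_{\min}$ by the very definition \eqref{eq:AlphaMin} (since $\E\{\eta(Z;\alpha)^2\}=2[(1+\alpha^2)\Phi(-\alpha)-\alpha\phi(\alpha)]$). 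At the same time $\E\{\eta'(X_0+\tau_*Z;\alpha\tau_*)\}\to 2\Phi(-\alpha_{\min})$ because $X_0/\tau_*\to 0$. It then suffices to check $2\Phi(-\alpha_{\min})>\delta$; using Eq.~(\ref{eq:AlphaMin}) this is equivalent to $\phi(\alpha_{\min})>\alpha_{\min}\Phi(-\alpha_{\min})$, which is the standard Mills' ratio inequality. Hence the bracket converges to a negative number while $\alpha\tau_*\to\infty$, yielding $\lambda(\alpha)\to-\infty$.

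For $\alpha\to\infty$, the threshold $\alpha\tau$ dominates, so $\eta(X_0+\tau Z;\alpha\tau)\to 0$ in $L^2$ by dominated convergence; substituting into the definition of $\seF$ gives $\seF(\tau^2,\alpha\tau)\to\sigma^2+\E\{X_0^2\}/\delta$, so $\tau_*(\alpha)^2\to\sigma^2+\E\{X_0^2\}/\delta$, a strictly positive constant (assuming the problem is nondegenerate; the edge case where both vanish can be handled directly). Simultaneously $\E\{\eta'(X_0+\tau_* Z;\alpha\tau_*)\}=\prob(|X_0+\tau_*Z|>\alpha\tau_*)\to 0$, so the bracket tends to $1$ and $\lambda(\alpha)\sim\alpha\tau_*\to\infty$. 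Combining the two limits with continuity, the intermediate value theorem shows $\lambda((\alpha_{\min},\infty))=\reals$, so for every $\lambda\in(0,\infty)$ the preimage is nonempty and $\alpha(\lambda)$ is well defined. I expect the main obstacle to be the asymptotic expansion of $\seF$ near $\alpha_{\min}$ with enough uniformity in $\tau$ to conclude $\tau_*\to\infty$ rigorously; the Mills' ratio step that pins down the sign is short but essential and easy to overlook.
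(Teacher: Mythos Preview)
Your proof is correct and follows essentially the same architecture as the paper's: implicit function theorem for continuity of $\tau_*(\alpha)$, composition for continuity of $\lambda(\alpha)$, the Mills' ratio inequality $\alpha\Phi(-\alpha)<\phi(\alpha)$ to pin down the sign of the bracket at $\alpha_{\min}$, and the intermediate value theorem to conclude.

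The one substantive difference is how you show $\tau_*(\alpha)\to\infty$ as $\alpha\downarrow\alpha_{\min}$. You derive an asymptotic expansion of $\seF(\tau^2,\alpha\tau)$ for large $\tau$ and read off that the fixed point blows up as the leading coefficient $\delta^{-1}\E\{\eta(Z;\alpha)^2\}$ approaches $1$; as you yourself flag, this requires some uniformity bookkeeping to be airtight. The paper instead exploits the concavity of $\tau^2\mapsto\seF(\tau^2,\alpha\tau)$ (established in the proof of Proposition~\ref{propo:UniqFP}): concavity gives the global lower bound $\seF(\tau^2,\alpha\tau)\ge\seF(0,0)+\seF'_\infty\cdot\tau^2$ with $\seF'_\infty=\lim_{\tau^2\to\infty}\frac{\de\seF}{\de\tau^2}$, hence $\tau_*^2\ge\seF(0,0)/(1-\seF'_\infty)$, and one just checks $\seF'_\infty\uparrow 1$ as $\alpha\downarrow\alpha_{\min}$ (this limit equals $\frac{2}{\delta}[(1+\alpha^2)\Phi(-\alpha)-\alpha\phi(\alpha)]$, the same quantity appearing in your expansion). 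This concavity route is shorter and avoids the uniformity issue entirely, so it is worth knowing; your expansion-based argument gives slightly more quantitative information but at the cost of a more delicate justification.
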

A proof of this statement is provided in Section \ref{app:Lambda}.
We will denote by $\cA = \alpha((0,\infty))$ the image of the function
$\alpha$. Notice that the definition of $\alpha$ is \emph{a priori}
not unique. We will see that uniqueness follows from our main theorem.

Examples of the mappings $\tau^2\mapsto\seF(\tau^2,\alpha\tau)$,
$\alpha\mapsto\tau_*(\alpha)$ and $\alpha\mapsto\lambda(\alpha)$
are presented in Figures \ref{fig:tau2->F(tau2)}, \ref{fig:alpha2taustar}, and \ref{fig:alpha2lam} respectively.

\subsubsection{Main results}

We can now state our main result.
\begin{theorem}\label{thm:Risk}
Let $\{x_0(N), w(N), A(N)\}_{N\in\naturals}$ be a converging sequence of
instances with the entries of $A(N)$ iid normal with mean $0$ and variance
$1/n$. Denote by $\hx(\lambda;N)$ the \LASSO\, estimator for
instance $(x_0(N), w(N), A(N))$, with $\sigma^2,\lambda>0$,
$\prob\{X_0\neq 0\}>0$ and
let $\psi:\reals\times\reals\to \reals$ be a pseudo-Lipschitz function.
Then, almost surely
\begin{eqnarray}\label{eq:asymptotic-result}
\lim_{N\to\infty}\frac{1}{N}\sum_{i=1}^N\psi
\big(\hx_{i},x_{0,i}\big) = \E\Big\{\psi\big(\eta(X_0+\tau_* Z;\theta_*),X_0\big)\Big\}\, ,
\end{eqnarray}
where $Z\sim\normal(0,1)$ is independent of $X_0\sim p_{X_0}$,
$\tau_*=\tau_*(\alpha(\lambda))$ and $\theta_*=\alpha(\lambda)
\tau_*(\alpha(\lambda))$.
\end{theorem}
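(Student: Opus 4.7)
My plan is to transfer the state evolution result of Theorem~\ref{prop:state-evolution}, which describes the AMP iterates $x^t$, onto the LASSO minimizer $\hx$ by proving that the two agree asymptotically, i.e.\ that
\[
\lim_{t\to\infty}\limsup_{N\to\infty}\frac{1}{N}\|x^t-\hx\|_2^2 \;=\; 0\quad\text{almost surely.}
\]
Given this, any pseudo-Lipschitz test function $\psi$ can be transported from $(x^t_i,x_{0,i})$ to $(\hx_i,x_{0,i})$ by a Cauchy--Schwarz type estimate, combined with the uniform second moment control furnished by Theorem~\ref{prop:state-evolution} and by the LASSO first-order conditions. Letting $t\to\infty$ in \eqref{eq:state-evolution} and using Proposition~\ref{propo:UniqFP} (so that $\tau_t\to\tauinf$ and $\theta_t=\alpha\tau_t\to\alpha\tauinf$) then yields \eqref{eq:asymptotic-result}.

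The role of the calibration \eqref{eq:calibration} is exactly to make this transfer consistent. Any fixed point $(x^*,z^*)$ of the AMP recursion \eqref{eq:dmm} satisfies $x^*=\eta(A^*z^*+x^*;\theta)$ and $z^*=y-Ax^*+\delta^{-1}z^*\<\eta'(A^*z^*+x^*;\theta)\>$. The first identity rewrites as a subgradient inclusion $A^*z^*\in\theta\,\partial\|x^*\|_1$; solving the second for $z^*$ and substituting gives the KKT system for the LASSO at regularization
\[
\lambda \;=\; \theta\Bigl[\,1-\tfrac{1}{\delta}\,\<\eta'(A^*z^*+x^*;\theta)\>\Bigr],
\]
which, evaluated along the state evolution trajectory, is precisely the definition of $\lambda(\alpha)$ in \eqref{eq:calibration}. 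Thus the choice $\theta=\alpha\tauinf$ is forced if AMP fixed points are to be LASSO optima, which is also the reason the inversion $\alpha(\lambda)$ of Proposition~\ref{propo:Lambda} is the right one.

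To produce the quantitative closeness $\|x^t-\hx\|_2^2/N\to 0$, plain convexity of $\cost_{A,y}$ is not enough, since the quadratic part $\tfrac12\|y-Ax\|_2^2$ is not strongly convex when $n<N$. My approach is to run the argument on an $\err$-regularized surrogate, using exactly the $\AMPe$ and $\LASSOe$ objects hinted at by the paper's macros: replace the cost by $\cost_{A,y}(x)+\tfrac{\err}{2}\|x\|_2^2$, with unique minimizer $\hxerr$, and modify AMP correspondingly by applying the proximal map of $\lambda\|\cdot\|_1+\tfrac{\err}{2}\|\cdot\|^2$ instead of the plain soft-thresholder $\eta$. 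Strong convexity then gives $\|x^t_\err-\hxerr\|_2^2\le \tfrac{2}{\err}\bigl(\cost_{\err}(x^t_\err)-\cost_{\err}(\hxerr)\bigr)$, and the right-hand side can be bounded by the squared norm of the AMP residual in the fixed-point equation, which is a pseudo-Lipschitz functional of $(x^t_\err,x^{t-1}_\err,x_0)$ accessible to the state evolution prediction. Taking $N\to\infty$ first, then $t\to\infty$, the corresponding state-evolution fixed-point equation for $\tauerr$ forces this residual to $0$, so $\|x^t_\err-\hxerr\|_2^2/N\to 0$.

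The main obstacle, and the place where one has to work hardest, is the final step: removing the regularization by letting $\err\downarrow 0$ while maintaining all the above asymptotics. What is needed is that (i) the regularized state-evolution fixed point $\tauerr$ and the calibration $\lambda_\err(\alpha)$ are continuous in $\err$ at $\err=0$, with limits equal to $\tauinf$ and $\lambda(\alpha)$, using the nondegeneracy $|\partial_{\tau^2}\seF|<1$ at $\tauinf$ from Proposition~\ref{propo:UniqFP}; and (ii) the LASSO minimizer is stable under this perturbation, $\|\hxerr-\hx\|_2^2/N\to 0$ almost surely. Step (ii) requires a restricted-eigenvalue/invertibility estimate for $A$ on the (effectively sparse) support of $\hx$, which is available with probability one for iid gaussian $A$ of aspect ratio $\delta$, together with a deterministic bound on $\|\hxerr-\hx\|_1$ in terms of $\err$. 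Assembling the three limits $N\to\infty$, $t\to\infty$, $\err\to 0$ in this order, and checking that the error terms we accumulate are indeed pseudo-Lipschitz-controlled so that the gaussian state evolution applies to each of them, is the technical heart of the proof.
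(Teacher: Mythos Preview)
Your high-level strategy---establish $\lim_{t\to\infty}\lim_{N\to\infty}N^{-1}\|x^t-\hx\|_2^2=0$ and then transfer via the pseudo-Lipschitz bound and Cauchy--Schwarz---is exactly what the paper does (Theorem~\ref{thm:FiniteTime} followed by the short argument at the top of Section~\ref{sec:MainProof}). Your calibration discussion is also correct.

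Where you diverge is in the mechanism for proving closeness. You propose an $\err$-regularization (elastic-net) route: obtain strong convexity, show $\|x^t_\err-\hxerr\|_2^2/N\to 0$, then send $\err\downarrow 0$. The paper does \emph{not} do this. (The macros $\AMPe$, $\LASSOe$, $\hxerr$, $\tauerr$ you spotted are defined but never used in the body; they are leftovers, not hints.) Instead, the paper proves a deterministic structural lemma (Lemma~\ref{lemma:smallcost2smallmse}): if $x$ has a small subgradient of $\cost$ \emph{and} a restricted minimum-singular-value condition holds on the set $S(c_2)$ of near-active coordinates at $x$, then any $r$ with $\cost(x+r)\le\cost(x)$ must be small. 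This is applied with $x=x^t$ and $r=\hx-x^t$. The small-subgradient hypothesis is verified from the AMP fixed-point equations together with $\|x^t-x^{t-1}\|,\|z^t-z^{t-1}\|\to 0$ (Lemmas~\ref{lemma:small-subgradient} and~\ref{lemma:Convergence}); the singular-value hypothesis via Lemma~\ref{lemma:MinS} and Proposition~\ref{propo:PSD}.

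The gap in your route is precisely your step~(ii). You write that $\|\hxerr-\hx\|_2^2/N\to 0$ ``requires a restricted-eigenvalue/invertibility estimate for $A$ on the (effectively sparse) support of $\hx$, which is available with probability one for iid gaussian $A$''. This is where the real difficulty hides, and it is \emph{not} available off the shelf: $\supp(\hx)$ is a function of $A$, so one cannot invoke a bound that holds for a fixed index set of the right cardinality; conditioning on $\supp(\hx)$ destroys the independence structure of $A$. The paper's key device is to avoid $\supp(\hx)$ altogether and instead use the set $S_t(\gamma)$ of near-active coordinates of the AMP iterate, which is measurable with respect to the $\sigma$-algebra $\sigal{S}_t$ generated by the first $t$ iterations. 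Conditional on $\sigal{S}_t$, the law of $A$ is still explicit (Eq.~\eqref{eq:Conditional_A}), and this is what makes the restricted singular-value bound (Lemma~\ref{lemma:MinS}) provable. Your $\err$-regularization detour does not sidestep this issue: after $N\to\infty$ and $t\to\infty$ you have computed $\lim_N N^{-1}\sum_i\psi(\hx_{\err,i},x_{0,i})$ for each fixed $\err>0$, but passing to $\lim_N N^{-1}\sum_i\psi(\hx_i,x_{0,i})$ still requires controlling $\|\hxerr-\hx\|_2/\sqrt{N}$ uniformly in $N$ as $\err\downarrow 0$, and for that you are back to needing exactly the same kind of restricted-invertibility argument on an $A$-dependent set.
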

Let us emphasize oonce more that the vectors $x_0(N)$, $w(N)$ are
deterministic in this statement, and `almost surely' is understood
with respect to the choice of $A(N)$.

As a corollary, using function $\psi(a,b)\equiv (a-b)^2$ we obtain:
\begin{corollary}\label{coro:lasso-risk}
Assume the hypothesis of Theorem \ref{thm:Risk}. Let $\hx(\lambda;N)$ be the \LASSO\, estimator for instance $(x_0(N), w(N), A(N))$. Then, almost surely
\begin{eqnarray*}
\lim_{N\to\infty}\frac{1}{N}\|x_0-\hx(\lambda;N)\|^2 = \E\Big\{\left[\eta(X_0+\tau_* Z;\theta_*)-X_0\right]^2\Big\}=\delta(\tau_*^2-\sigma^2)\, ,
\end{eqnarray*}
where $Z\sim\normal(0,1)$ is independent of $X_0\sim p_{X_0}$,
$\tau_*=\tau_*(\alpha(\lambda))$ and $\theta_*=\alpha(\lambda)
\tau_*(\alpha(\lambda))$.
\end{corollary}

As a second corollary of Theorem \ref{thm:Risk}, the function $\lambda\mapsto\alpha(\lambda)$ is
indeed uniquely defined.
\begin{corollary}\label{coro:AlphaUnique}
For any $\lambda,\sigma^2>0$ there exists a unique
$\alpha>\alpha_{\rm min}$ such that
$\lambda(\alpha) = \lambda$ (with the function $\alpha
\to\lambda(\alpha)$ defined as in Eq.~(\ref{eq:calibration}).

Hence the function $\lambda\mapsto\alpha(\lambda)$ is continuous non-decreasing
with $\alpha((0,\infty)) \equiv\cA = (\alpha_0,\infty)$.
\end{corollary}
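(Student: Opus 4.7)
The overall plan is to extract the corollary from Theorem \ref{thm:Risk}, relegating the topological part to Proposition \ref{propo:Lambda}. Indeed, by Proposition \ref{propo:Lambda} the function $\alpha\mapsto\lambda(\alpha)$ is continuous with $\lambda(\alpha_{\min}+)=-\infty$ and $\lim_{\alpha\to\infty}\lambda(\alpha)=\infty$, so the intermediate value theorem already delivers at least one $\alpha\in(\alpha_{\min},\infty)$ with $\lambda(\alpha)=\lambda$ for every $\lambda>0$. The substantive step is uniqueness; once that is established, continuity and monotonicity of $\alpha(\cdot)$, and the form $\cA=(\alpha_0,\infty)$, will fall out of elementary real-analysis.

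For uniqueness I would suppose two distinct $\alpha_1,\alpha_2\in(\alpha_{\min},\infty)$ satisfy $\lambda(\alpha_i)=\lambda$ and derive a contradiction using two different pseudo-Lipschitz test functions in Theorem \ref{thm:Risk}. Fix a converging sequence with the given $p_{X_0},p_W$ and Gaussian $A(N)$. Applying the theorem with $\psi(x,y)=(x-y)^2$ yields
\begin{eqnarray*}
\lim_{N\to\infty}\frac{1}{N}\|\hx(\lambda;N)-x_0(N)\|^2 = \E\big\{[\eta(X_0+\tau_*(\alpha_i)Z;\alpha_i\tau_*(\alpha_i))-X_0]^2\big\},\quad i=1,2.
\end{eqnarray*}
The left-hand side depends only on $\lambda$, and each right-hand side reduces to $\delta(\tau_*(\alpha_i)^2-\sigma^2)$ by the fixed-point identity $\tau_*^2=\seF(\tau_*^2,\alpha\tau_*)$ from Proposition \ref{propo:UniqFP}; hence $\tau_*(\alpha_1)=\tau_*(\alpha_2)=:\tau_*$. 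A second application of Theorem \ref{thm:Risk}, now with the Lipschitz (hence pseudo-Lipschitz) function $\psi(x,y)=|x|$, gives
\begin{eqnarray*}
\lim_{N\to\infty}\frac{1}{N}\|\hx(\lambda;N)\|_1 = \E\big\{(|X_0+\tau_*Z|-\alpha_i\tau_*)_+\big\},\quad i=1,2,
\end{eqnarray*}
and again the two right-hand sides must agree. Since $\theta\mapsto\E\{(|X_0+\tau_*Z|-\theta)_+\}$ has derivative $-\prob\{|X_0+\tau_*Z|>\theta\}<0$ for every finite $\theta$ (the condition $\tau_*>0$ makes $X_0+\tau_*Z$ fully supported on $\reals$), this map is strictly decreasing, so $\alpha_1\tau_*=\alpha_2\tau_*$ and finally $\alpha_1=\alpha_2$.

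With uniqueness in hand, let $S=\{\alpha>\alpha_{\min}:\lambda(\alpha)>0\}$, which is open by continuity of $\lambda$. Uniqueness translates into injectivity of $\lambda$ on $S$, and continuous injectivity on an open subset of $\reals$ forces strict monotonicity on each connected component. The limits at $\alpha_{\min}+$ and $+\infty$ single out a distinguished component containing some tail $(M,\infty)$, on which $\lambda$ must be strictly increasing; any other bounded component $(c,d)$ would have $\lambda$ continuous and positive in its interior with boundary values $\le 0$, forcing $\lambda$ to attain its positive maximum twice and violating injectivity. Hence $S=(\alpha_0,\infty)$ for a unique $\alpha_0>\alpha_{\min}$ with $\lambda(\alpha_0)=0$, and the restriction $\lambda:(\alpha_0,\infty)\to(0,\infty)$ is a strictly increasing continuous bijection; its inverse $\alpha(\cdot)$ is therefore continuous and strictly increasing, and $\cA=(\alpha_0,\infty)$. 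The chief obstacle is the uniqueness step, because a single scalar statistic of $\hx$ such as $\MSE$ pins down only $\tau_*(\alpha)$ and not $\alpha$ itself; the resolution is to exploit the fact that Theorem \ref{thm:Risk} gives the limit for every pseudo-Lipschitz $\psi$ and to take a second test function that separates $\alpha\tau_*$ from $\tau_*$, for which $\psi(x,y)=|x|$ is the natural choice.
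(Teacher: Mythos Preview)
Your proof is correct and follows essentially the same approach as the paper: existence via Proposition~\ref{propo:Lambda}, then uniqueness by applying Theorem~\ref{thm:Risk} with the two test functions $\psi(x,y)=(x-y)^2$ and $\psi(x,y)=|x|$ to pin down first $\tau_*$ and then $\alpha\tau_*$. Your treatment of the topological conclusions (that $\lambda$ restricted to $\{\lambda>0\}$ is a strictly increasing bijection onto $(\alpha_0,\infty)$) is in fact more careful than the paper's, which simply asserts these consequences.
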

The proof of this corollary (which uses Theorem \ref{thm:Risk})
is provided in Appendix \ref{app:LambdaBis}.

The assumption of a converging problem-sequence is
important for the result to hold, while the hypothesis of gaussian
measurement matrices $A(N)$ is necessary for the proof technique
to be correct. On the other hand,
the restrictions $\lambda,\sigma^2>0$, and $\prob\{X_0\neq 0\}>0$
(whence $\tau_*\neq0$ using Eq.~\eqref{eq:calibration})
are made in order to avoid technical complications due
to degenerate cases. Such cases can be resolved by continuity arguments.

We prove Theorem \ref{thm:Risk} by proving the following result in Section \ref{sec:MainProof}.
\begin{theorem}\label{thm:FiniteTime}
Assume the hypotheses of Theorem \ref{thm:Risk}.
Let $\hx(\lambda;N)$ be the LASSO estimator for
instance $(x_0(N), w(N), A(N))$, and denote by $\{x^t(N)\}_{t\ge 0}$
the sequence of estimates produced by $\AMP$. Then
\begin{eqnarray}
\lim_{t\to\infty}\lim_{N\to\infty} \frac{1}{N}\|x^t(N)-\hx(\lambda;N)\|_2^2 = 0\, ,
\end{eqnarray}
almost surely.
\end{theorem}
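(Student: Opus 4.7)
My plan is to prove closeness in two conceptually distinct stages: first, use the AMP recursion together with state evolution to show that the iterate $x^{t+1}$ approximately satisfies the KKT conditions of the LASSO, with residual vanishing as first $N\to\infty$ and then $t\to\infty$; second, convert this approximate optimality into closeness in normalized $\ell_2$ using a restricted strong convexity property of $\cost_{A,y}$. The whole strategy rests on the observation (visible from the calibration \eqref{eq:calibration}) that fixed points of AMP are LASSO optima.

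\textbf{Step 1 (approximate KKT for the AMP iterate).} Because $\eta(\,\cdot\,;\theta)$ is the proximal operator of $\theta|\cdot|$, the identity $x^{t+1}=\eta(A^*z^t+x^t;\theta_t)$ gives the exact inclusion $A^*z^t-(x^{t+1}-x^t)=\theta_t v^{t+1}$ for some $v^{t+1}\in\partial\|x^{t+1}\|_1$. Substituting the recursion $z^t=y-Ax^t+b_tz^{t-1}$, where $b_t=\delta^{-1}\<\eta'(A^*z^{t-1}+x^{t-1};\theta_{t-1})\>$, and using $A^*(y-Ax^{t+1})=A^*(y-Ax^t)-A^*A(x^{t+1}-x^t)$, I would rearrange to obtain
\begin{equation*}
A^*(y-Ax^{t+1})-\theta_t(1-b_t)\,v^{t+1}=s^{t+1},
\end{equation*}
where $s^{t+1}$ collects the terms $(I-A^*A)(x^{t+1}-x^t)$ and a contribution involving $A^*z^{t-1}$. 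By Theorem \ref{prop:state-evolution} applied to suitable pseudo-Lipschitz test functions, $b_t\to\delta^{-1}\E\{\eta'(X_0+\tau_t Z;\theta_t)\}$, so combined with Proposition \ref{propo:UniqFP} and the calibration \eqref{eq:calibration} one has $\theta_t(1-b_t)\to\lambda$. The crucial technical claim is then $N^{-1}\|s^{t+1}\|_2^2\to 0$ (first in $N$, then in $t$), which reduces via state evolution to the already-established convergence of second-moment quantities like $N^{-1}\|x^{t+1}-x^t\|^2$ to quantities that vanish as $t\to\infty$ by $\tau_{t+1}-\tau_t\to 0$.

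\textbf{Step 2 (from approximate KKT to $\ell_2$-closeness).} Let $\xi^{t+1}=x^{t+1}-\hx$. Combining convexity of the smooth part $\frac{1}{2}\|y-Ax\|^2$ and of $\lambda\|x\|_1$ with the approximate KKT relation from Step 1, and with the exact KKT condition $A^*(y-A\hx)\in\lambda\partial\|\hx\|_1$, standard manipulations yield
\begin{equation*}
\tfrac{1}{2}\|A\xi^{t+1}\|_2^2\;\le\;\langle s^{t+1},\,\xi^{t+1}\rangle+\epsilon_t\|\xi^{t+1}\|_1,
\end{equation*}
where $\epsilon_t=|\theta_t(1-b_t)-\lambda|\to 0$ in the same limit sense. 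Cauchy–Schwarz then gives $\|A\xi^{t+1}\|_2^2\lesssim(\|s^{t+1}\|_2+\epsilon_t\sqrt{N})\|\xi^{t+1}\|_2$, using $\|\xi^{t+1}\|_1\le\sqrt{N}\|\xi^{t+1}\|_2$.

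\textbf{Step 3 (main obstacle: restricted strong convexity).} The hard part is converting $\|A\xi\|_2$ into $\|\xi\|_2$, since for $n<N$ the matrix $A$ has a nontrivial kernel and $\cost_{A,y}$ is not strictly convex globally. The plan is to exploit the near-sparsity of both $x^{t+1}$ (forced by the soft thresholding in AMP, with support fraction quantified by state evolution as $\prob(|X_0+\tau_t Z|>\theta_t)$) and $\hx$ (forced by the $\ell_1$ penalty, with an analogous fraction). Standard compatibility/restricted eigenvalue results for i.i.d.\ Gaussian matrices with $n/N\to\delta$ give, almost surely, a constant $c>0$ such that $\|A\xi\|_2^2\ge c\|\xi\|_2^2$ on the cone of vectors whose $\ell_1$ mass is not too concentrated off the joint effective support of $x^{t+1}$ and $\hx$. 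Verifying that $\xi^{t+1}$ lies in this cone is the main technical hurdle, and requires tracking the support structure of $x^{t+1}$ through the iteration via state evolution and comparing with that of $\hx$; a convenient route is to first establish a weaker bound $N^{-1}\|\xi^{t+1}\|_1\le C$ and then use the fact that for Gaussian $A$ the $\ell_1$-restricted minimum singular value is bounded below. Combining Steps 1–3 yields $N^{-1}\|x^{t+1}(N)-\hx(\lambda;N)\|_2^2\to 0$ as $N\to\infty$ then $t\to\infty$, which is the claim.
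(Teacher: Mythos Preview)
Your Steps 1 and 2 are on target and parallel the paper: Lemma~\ref{lemma:small-subgradient} is exactly the approximate-KKT statement you describe, and the opening of the proof of Lemma~\ref{lemma:smallcost2smallmse} carries out the analogue of your Step~2 to reach $\|A\xi\|_2^2\le N\xi_1(\eps)$.

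The genuine gap is Step~3, and it is more serious than you indicate. First, the support of $x^{t+1}$ is \emph{not independent of $A$}: it is produced by $t$ rounds of an iteration that reads $A$. You therefore cannot invoke ``standard compatibility/restricted eigenvalue results,'' which either fix the support independently of $A$ or take a union bound over all supports of a given size; the latter costs $e^{Nh(s)}$ against a per-set failure probability that is only $e^{-cN}$, and you do not control the gap $\delta-|S|/N$ tightly enough to balance these. The paper's Lemma~\ref{lemma:MinS} resolves this by computing the conditional law $A|_{\sigal{S}_t}=E_t+P_{M_t}^\perp\tA P_{Q_t}^\perp$ and showing that the residual Gaussian piece $P_{M_t}^\perp\tA P_{Q_t}^\perp$ still supplies a minimum-singular-value bound on $A_{S\cup S'}$ for $S$ measurable on $\sigal{S}_t$ with $|S|\le N(\delta-c)$ and \emph{arbitrary} $|S'|\le a_1N$. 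Second, your plan also uses the support of $\hx$, but nothing is known about $\hx$ at this stage (Theorem~\ref{thm:Risk} is downstream of what you are proving). The paper's structural Lemma~\ref{lemma:smallcost2smallmse} avoids this entirely: it uses only the near-active set $S(c_2)$ determined by the dual variable at $x^t$, and handles the unknown part of $r=\hx-x^t$ via the Cand\`es--Tao sorting/partitioning trick, which is precisely why the singular-value hypothesis there is stated over $S(c_2)\cup S'$ for all small $S'$.

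There is one more missing ingredient: even granting the above, the constant $a_2$ in Lemma~\ref{lemma:MinS} depends on $t$. The paper needs Lemma~\ref{lemma:ConvergenceSupport} (stability of the near-active sets $S_t(\gamma)$ as $t$ grows, proved via the two-time state evolution of Theorem~\ref{prop:state-evolution-2times}) to upgrade this to a $t$-uniform bound in Proposition~\ref{propo:PSD}. Your sketch contains no analogue of this step.
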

Let us emphasize that the statement of Theorem \ref{thm:FiniteTime}
requires taking the limit of infinite dimensions $N\to\infty$ \emph{before} the
limit of an infinite number of iterations $t\to\infty$. In this sense it is
(informally speaking) a statement about the high-dimensional limit
behavior, for a large-but-finite number of iterations. Although this
is not a common setting within mathematical optimization, we think
that it is particularly compelling from a compressed sensing point of
view. It implies that, for any finite tolerance $\ve>0$, there exists a
finite number of iterations $t_*(\ve)$ such that for any fixed $t\ge
t_*(\ve)$,  AMP has mean squared error at most $\ve$ larger
than the LASSO, \emph{with high probability as $N\to\infty$}.
Further, closer analysis of the state evolution recursion \cite{DMM09,NSPT}
implies that $t_*(\eps) \le C\, \log(1/\ve)$ for some  constant $C$
independent of the dimension, and the signal $x_0$, provided the
under-sampling ratio $\delta$ is larger than a phase transition value
$\delta_c$.
Notice that taking the high dimensional point of view yields us a
considerably faster convergence than the optimum rate at fixed
dimension, namely $t_*(\ve) \le C/\sqrt{\ve}$ \cite{Beck}.

\begin{figure}
\centering
  \includegraphics[width=4.7in]{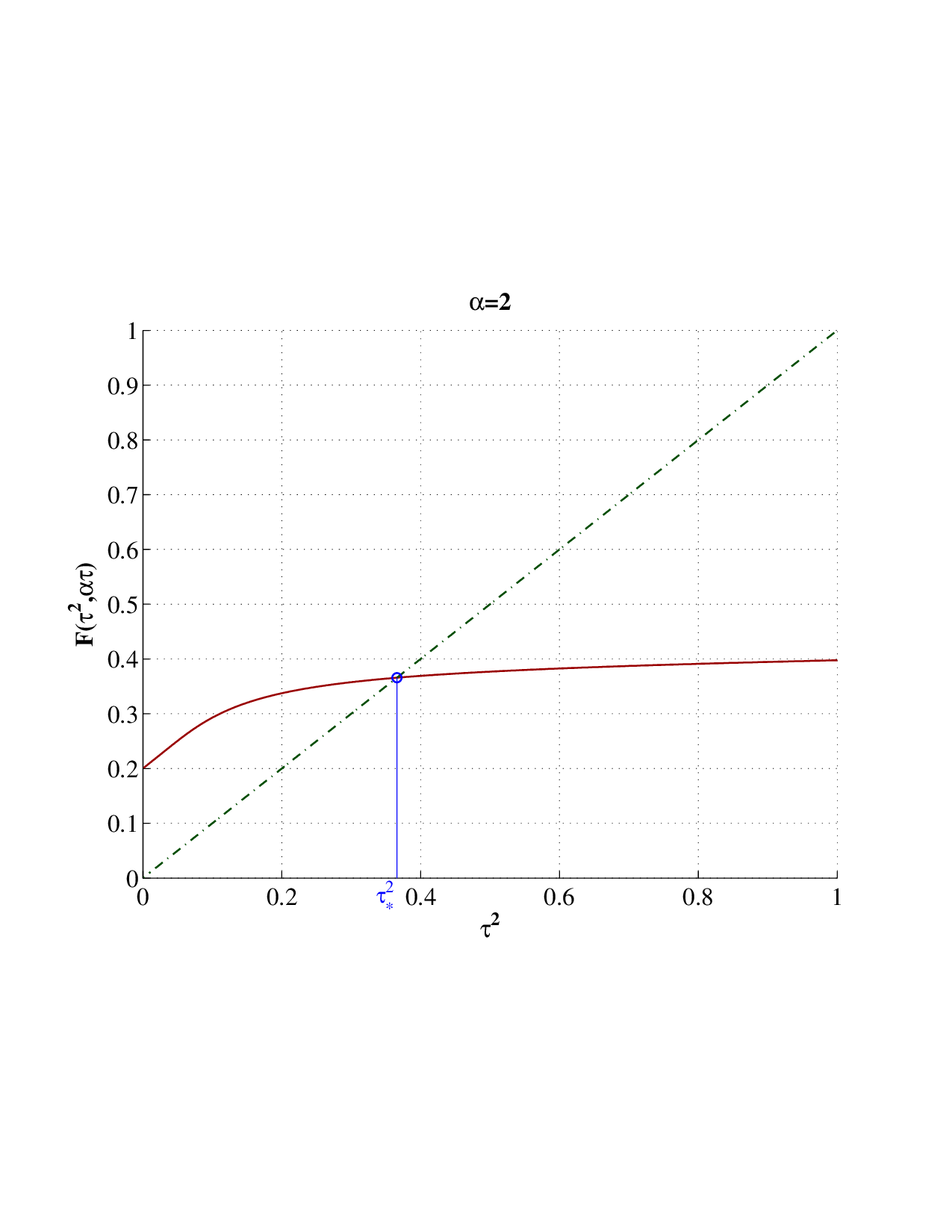}
  \caption{Mapping $\tau^2\mapsto\seF(\tau^2,\alpha\tau)$ for $\alpha=2$, $\delta=0.64$, $\sigma^2=0.2$,  $p_{X_0}(\{+1\}) = p_{X_0}(\{-1\}) = 0.064$ and
$p_{X_0}(\{0\}) = 0.872$. }
  \label{fig:tau2->F(tau2)}
\end{figure}

\begin{figure}
\centering
  \includegraphics[width=4.7in]{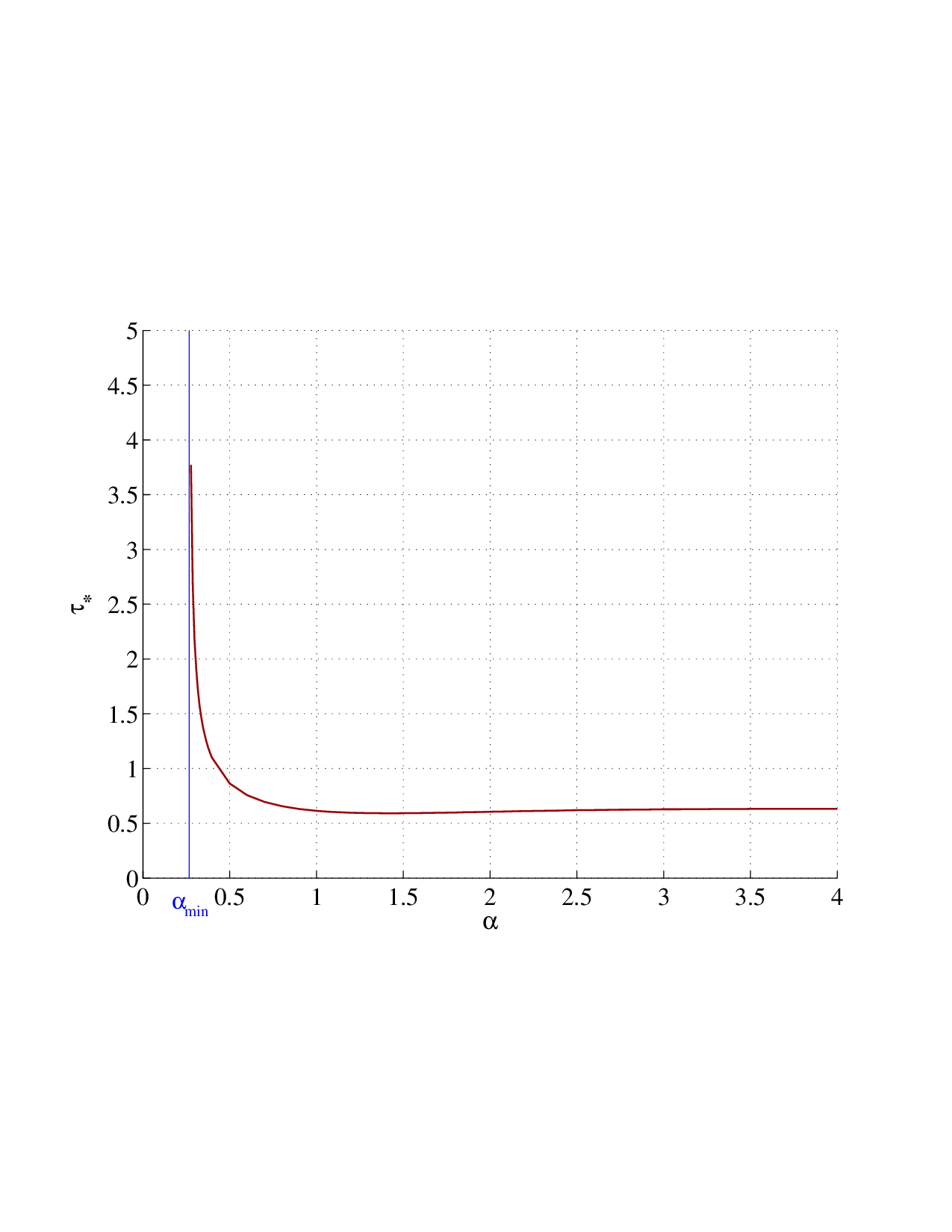}
  \caption{Mapping $\alpha\mapsto\tau_*(\alpha)$ for the same parameters $\delta$, $\sigma^2$ and distribution $p_{X_0}$ as in Figure \ref{fig:tau2->F(tau2)}.}
  \label{fig:alpha2taustar}
\end{figure}

\begin{figure}
\centering
  \includegraphics[width=4.7in]{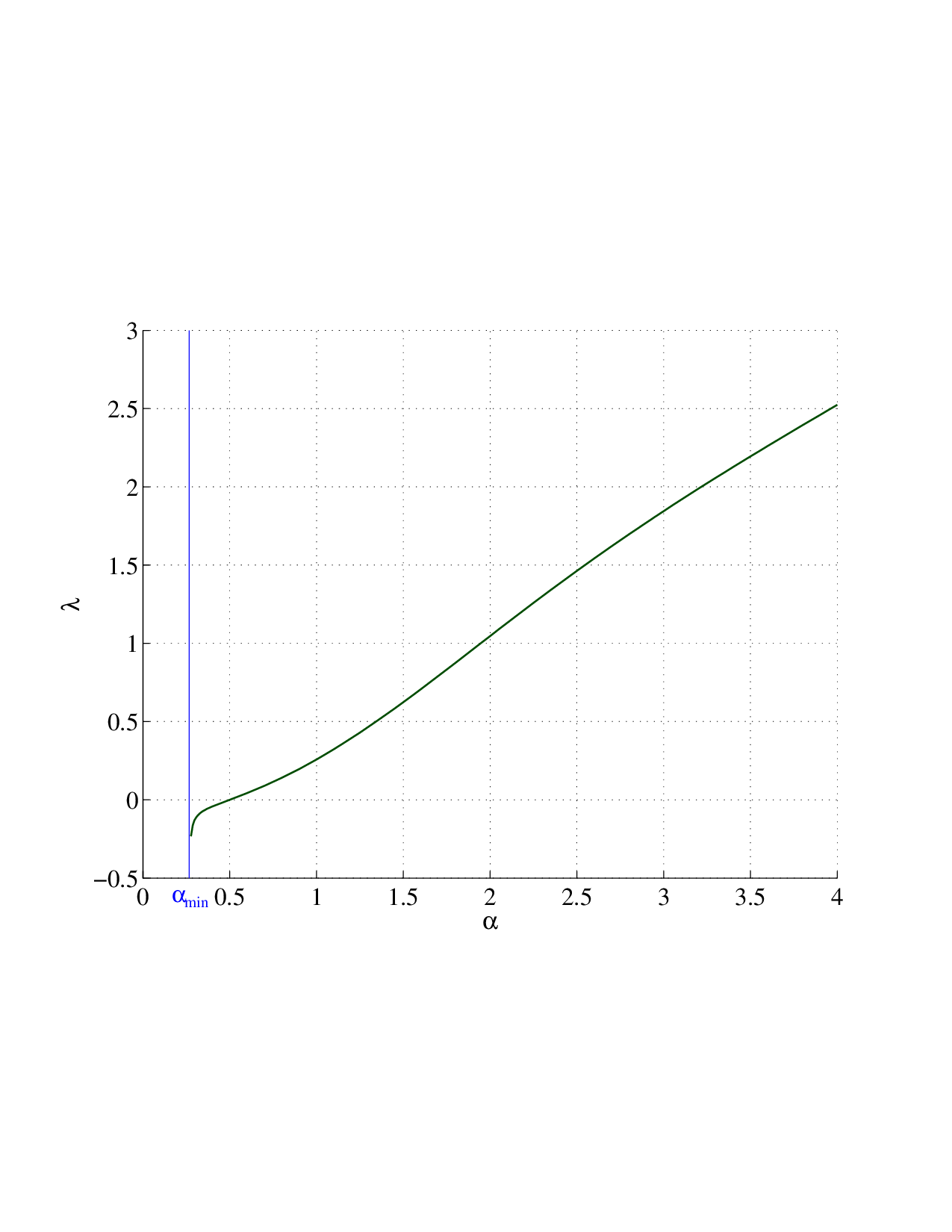}
  \caption{Mapping $\alpha\mapsto\lambda(\alpha)$ for the same parameters $\delta$, $\sigma^2$ and distribution $p_{X_0}$ as in Figure \ref{fig:tau2->F(tau2)}.}
  \label{fig:alpha2lam}
\end{figure}

%**************************************************************
%
\subsection{Related work}\label{sec:Related}

The LASSO was introduced in \cite{Tibs96,BP95}.
Several papers provide performance guarantees
for the LASSO or similar convex optimization methods
\cite{CandesStable,Dantzig}, by proving upper bounds on the
resulting mean squared error. These works assume an
appropriate `isometry' condition to hold for $A$.
While such condition hold with high probability for some
random matrices, it is often difficult to verify them explicitly.
Further, it is  only applicable to very sparse vectors
$x_0$. These restrictions are intrinsic to the worst-case
point of view developed in \cite{CandesStable,Dantzig}.

Guarantees have been proved for correct support recovery in
\cite{Zhao}, under an appropriate `incoherence' assumption on $A$.
While support recovery is an interesting conceptualization for
some applications (e.g. model selection), the metric considered
in the present paper (mean squared error) provides complementary
information and is quite standard
in many different fields.

Closer to the spirit of this paper \cite{Goyal}
derived expressions for the mean squared error under
the same model considered here. Similar results
were presented recently in \cite{KabashimaTanaka,BaronGuoShamai}.
These papers argue that a sharp asymptotic characterization
of the LASSO risk can provide valuable guidance in practical
applications. For instance, it can be used to evaluate competing
optimization methods on large scale applications, or to
tune the regularization parameter $\lambda$.

Unfortunately, these results were non-rigorous and were obtained
through the famously powerful `replica method' from statistical physics
\cite{MezardMontanari}.

Let us emphasize that the present paper offers two advantages
over these recent
developments: $(i)$ It is completely \emph{rigorous}, thus putting
on a firmer basis this line of research;
$(ii)$ It is \emph{algorithmic} in that
the LASSO mean squared error is shown to be equivalent to the one
achieved by a low-complexity message passing algorithm.
%
%**********************************************************
%
\section{Numerical illustrations}\label{sec:simulations}

Theorem \ref{thm:Risk} assumes that the entries of matrix $A$ have iid gaussian distribution.
We expect however the mean squared error
prediction  to be robust and hold for much
larger family of matrices.  Rigorous evidence in this direction is presented in \cite{KM-2010} where
the normalized cost $\cost(\hx)/N$ is shown to have a limit as
$N\to\infty$ which is universal with respect to
random matrices $A$ with iid entries.
(More precisely, it is universal provided $\E\{A_{ij}\}=0$,
$\E\{A_{ij}^2\}=1/n$ and $\E\{A_{ij}^6\}\le C/n^{3}$ for some uniform
constant $C$.)

Further, our result is asymptotic,
while and one might wonder how accurate it is for
instances of moderate dimensions.

Numerical simulations were carried out in \cite{NSPT,OurNips} and
suggest that the result is robust and relevant already for
$N$ of the order of a few hundreds.
As an illustration, we present in Figures \ref{fig:geneExpression}-\ref{fig:PM1Matrices} the outcome of such simulations
for  four types of real data and random matrices.
%Simulations with real data can be found in \cite{OurNips}.
We generated the signal vector randomly with entries in $\{+1,0,-1\}$
and $\prob(x_{0,i}=+1) = \prob(x_{0,i}=-1) = 0.064$. The noise vector
$w$ was generated by using i.i.d. $\normal(0,0.2)$ entries.

We obtained the optimum estimator $\hx$ using \texttt{CVX}, a package for
specifying and solving convex programs \cite{CVX} and \texttt{OWLQN},
a package for solving large-scale versions of LASSO \cite{OWLQN}. We used
several values of $\lambda$ between $0$ and $2$ and $N$ equal to $200$,
$500$, $1000$, and $2000$. The aspect ratio of matrices was fixed in
all cases to $\delta=0.64$.
For each case, the point $(\lambda,\MSE)$ was plotted and the results
are shown in the figures. Continuous lines corresponds to the
asymptotic prediction by Corollary \ref{coro:lasso-risk}, namely $\delta(\tau_*^2-\sigma^2)$.

The agreement is remarkably good already for $N, n$ of
the order of a few hundreds, and deviations are consistent with statistical
fluctuations.

The four figures correspond to measurement matrices $A$:
\begin{itemize}
\item Figure \ref{fig:geneExpression}: Data consist of $2253$ measurements of expression level of $7077$ genes.% This is not from Hastie. It is from broad institute through private communications (this example is from \cite{Hastie}).
From this matrix we took sub-matrices $A$ of aspect ratio $\delta$ for each $N$. The entries were continuous variables.
We standardized all columns of $A$ to have mean 0 and variance 1.
\item Figure \ref{fig:Medical}: From a data set of $1932$ patient records we extracted $4833$ binary features describing demographic information, medical history, lab results, medications etc. The $0$-$1$ matrix was sparse
(with only $3.1\%$ non-zero entries). Similar to $(i)$, for each $N$, the sub-matrices $A$ with aspect ratio $\delta$ were selected and standardized.
\item Figure \ref{fig:GaussianMatrices}: Random gaussian matrices with aspect ratio $\delta$ and iid
$\normal(0,1/n)$ entries (as in Theorem \ref{thm:Risk});
\item Figure \ref{fig:PM1Matrices}: Random $\pm1$ matrices with aspect ratio $\delta$. Each entry is
independently equal to $+1/\sqrt{n}$ or $-1/\sqrt{n}$ with equal probability.
\end{itemize}
Notice the behavior appears to be essentially indistinguishable.
Also the asymptotic prediction has
a minimum as a function of $\lambda$. The location of this minimum
 can be used to select the regularization parameter.
Further empirical analysis is presented in \cite{OurUnpub}.

\begin{figure}
\centering
  \includegraphics[width=4.7in]{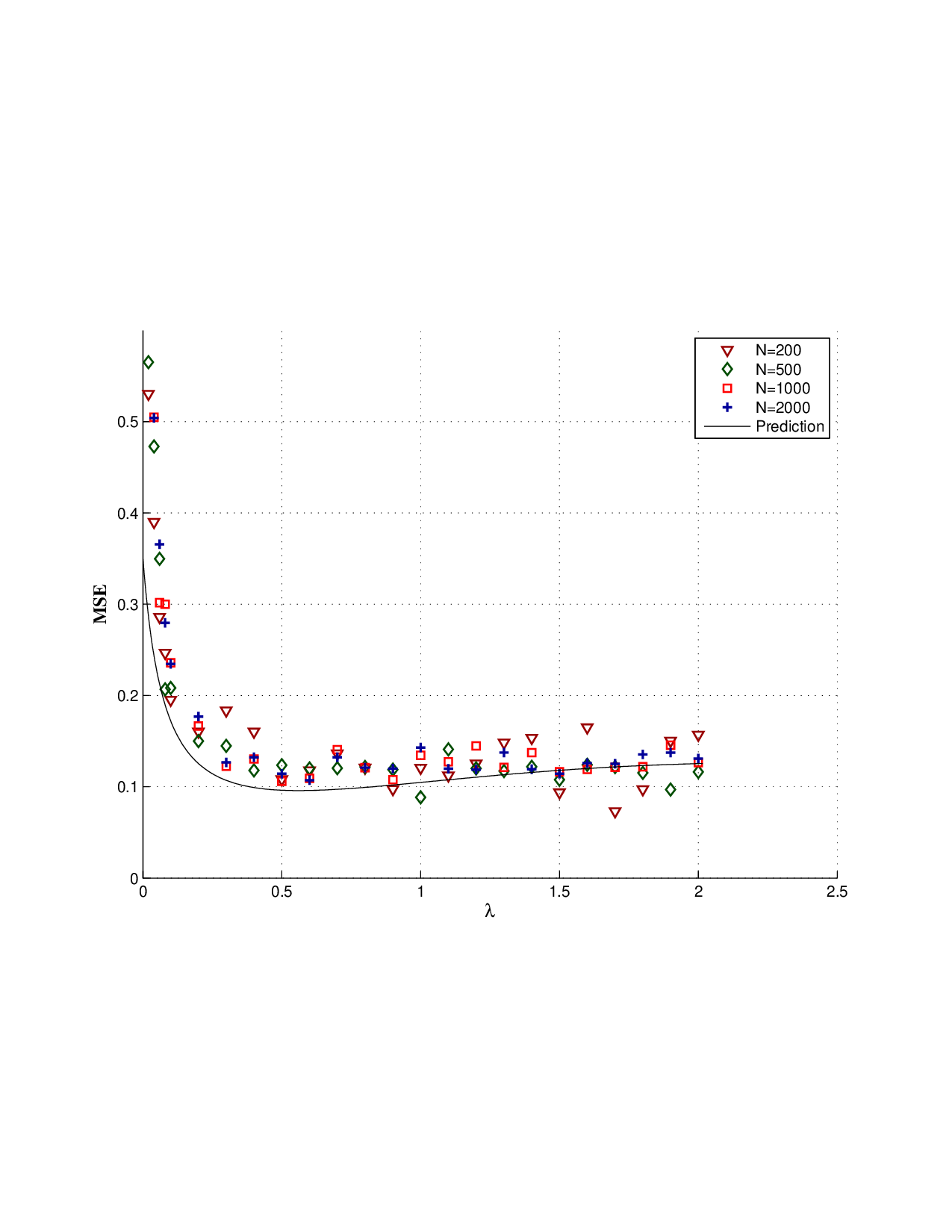}
  \caption{Mean squared error (MSE) as a function of the regularization parameter $\lambda$ compared to the asymptotic prediction for $\delta=.64$ and $\sigma^2=.2$.
Here the measurement matrix $A$ is a real valued (standardized) matrix of gene expression data. Each point in these plots is generated by finding the LASSO predictor $\hx$ using a measurement vector $y=Ax_0+w$ for an independent signal vector $x_0$ and an independent noise vector $w$.}
\label{fig:geneExpression}
\end{figure}
\begin{figure}
\centering
  \includegraphics[width=4.7in]{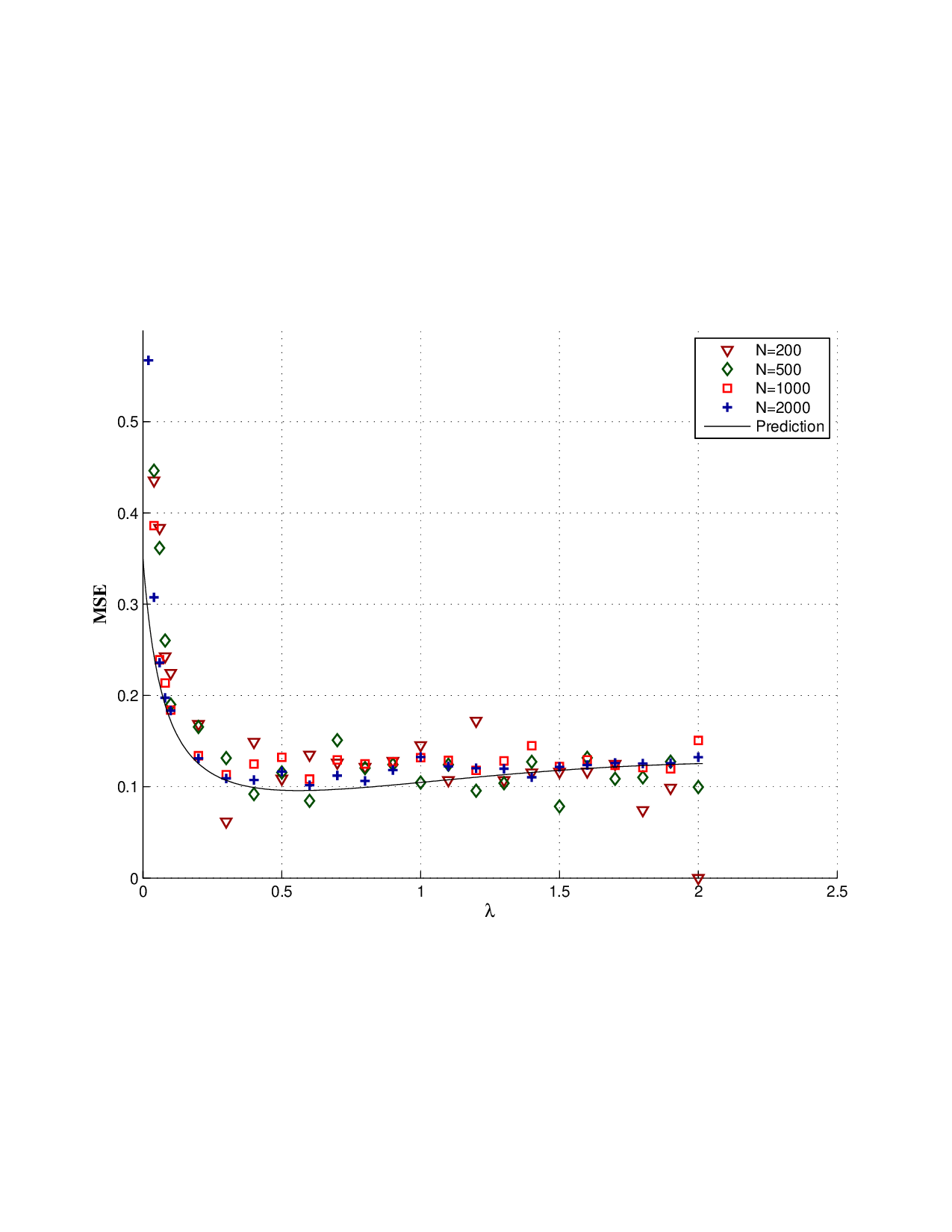}\\
  \caption{As in Figure \ref{fig:geneExpression}, but the measurement matrix $A$ is a (standardized) 0-1 feature matrix of hospital records.}\label{fig:Medical}
\end{figure}
\begin{figure}
\centering
  \includegraphics[width=4.7in]{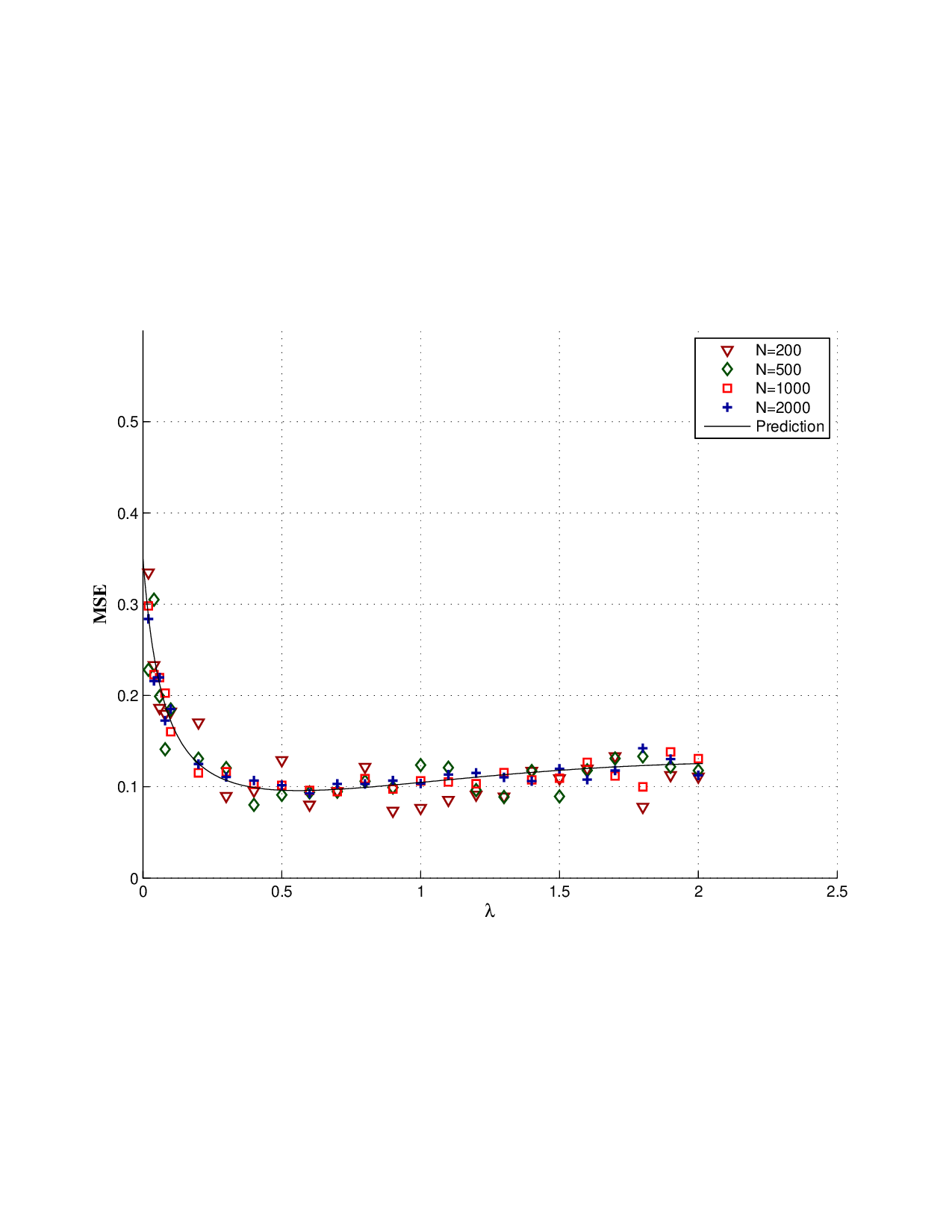}
  \caption{As in Figure \ref{fig:geneExpression}, but the measurement matrix $A$ has iid  $\normal(0,1/n)$ entries. Additionally, each point in this plot uses an independent matrix $A$.}\label{fig:GaussianMatrices}
\end{figure}

\begin{figure}
\centering
  \includegraphics[width=4.7in]{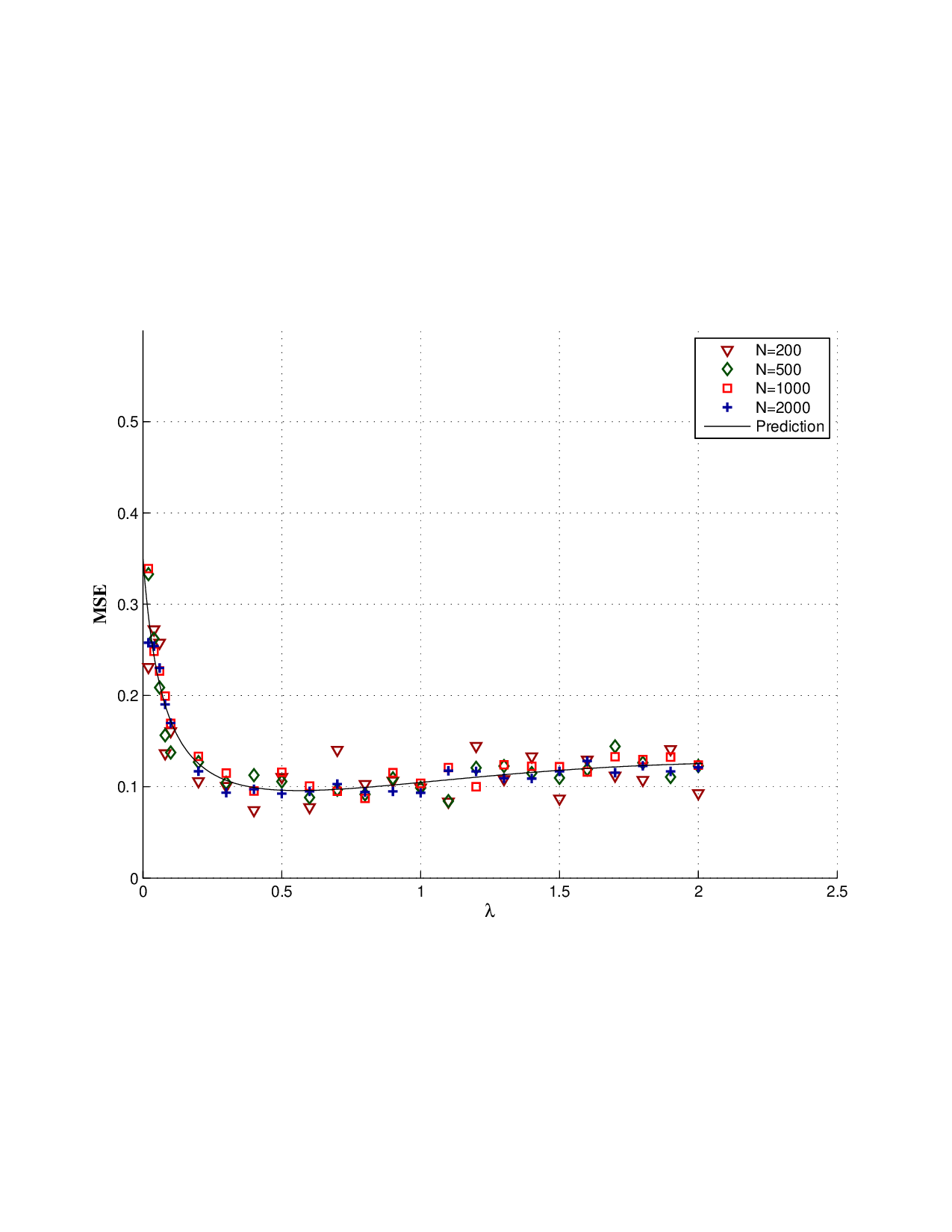}
 \caption{As in Figure \ref{fig:geneExpression}, but  the
measurement matrix $A$ has iid entries that are equal to $\pm1/\sqrt{n}$
with equal probabilities. Similar to Figure \ref{fig:GaussianMatrices}, each point in this plot uses an independent matrix $A$.}\label{fig:PM1Matrices}
\end{figure}
%
%**************************************************************
%
\section{A structural property
and proof of the main results}
\label{sec:MainProof}

The rest of the paper is devoted to the proof of Theorem \ref{thm:FiniteTime}. Section \ref{sec:Structural} proves a structural property that is the key tool in this proof. Section \ref{sec:ProofFiniteTime}
uses this property together with a few lemmas to prove
Theorem \ref{thm:FiniteTime}

The proof of Theorem \ref{thm:Risk} follows immediately from Theorem \ref{thm:FiniteTime}.
\begin{proof}[Proof of Theorem \ref{thm:Risk}]
For any $t\ge 0$, we have, by the pseudo-Lipschitz property of $\psi$,
\begin{align*}
\left|\frac{1}{N}\sum_{i=1}^N\psi
\big(x_{i}^{t+1},x_{0,i}\big)- \frac{1}{N}\sum_{i=1}^N\psi
\big(\hx_{i},x_{0,i}\big)\right|&\le
\frac{L}{N}\, \sum_{i=1}^N |x^{t+1}_i-\hx_i|
\big(1+2|x_{0,i}|+|x^{t+1}_i|+|\hx_i|\big)\\
&\le\frac{L}{N}\, \|x^{t+1}-\hx\|_2\,\sqrt{\sum_{i=1}^N\big(1+2|x_{0,i}|+|x^{t+1}_i|+|\hx_i|\big)^2}\\
&\le L \frac{\|x^{t+1}-\hx\|_2}{\sqrt{N}}\,\sqrt{4+\frac{8\|x_0\|_2^2}{N}+\frac{4\|x^{t+1}\|_2^2}{N}+\frac{4\|\hx\|_2^2}{N}}\,,\\
\end{align*}
where the second inequality follows by Cauchy-Schwarz.
Next we  take the limit $N\to\infty$ followed by $t\to\infty$.
The first term vanishes by Theorem \ref{thm:FiniteTime}.
For the second term, note that $\|x_0\|_2^2/N$ remains bounded
since $(x_0,w,A)$ is a converging sequence. The two terms
$\|x^{t+1}\|_2^2/N$ and $\|\hx\|_2^2/N$ also remain bounded in this
limit because of state evolution (as proved in  Lemma \ref{lemma:norm2(xt)is_bounded} below).

We then obtain
\begin{align*}
\lim_{N\to\infty} \frac{1}{N}\sum_{i=1}^N\psi
\big(\hx_{i},x_{0,i}\big)=
\lim_{t\to\infty}\lim_{N\to\infty}\frac{1}{N}\sum_{i=1}^N\psi
\big(x_{i}^{t+1},x_{0,i}\big) = \E\Big\{\psi\big(\eta(X_0+\tau_* Z;\theta_*),
X_0\big)\Big\}\, ,
\end{align*}
where we used  Theorem
\ref{prop:state-evolution} and Proposition \ref{propo:UniqFP}.
\end{proof}

%
%**************************************************************
%
\subsection{Some notations}

Before continuing, we introduce some useful notations.
For any non-empty subset $S$ of $[m]$ and any $k\times m$ matrix $M$
we refer by $M_S$ to the $k$ by $|S|$ sub-matrix of $M$ that contains only the
columns of $M$ corresponding to $S$. The same notation is used
for vectors $v\in\reals^m$: $v_S$ is the vector $(v_i:\, i\in S)$. For any vector $v\in\reals^m$ we denote support of $v$ by
\[\supp(v)\equiv\{i\,|\,v_i\neq 0\}\,.\]
We will also use the following scalar product for $u,v\in\reals^m$:
\begin{eqnarray}
\<u,v\> \equiv \frac{1}{m}\,\sum_{i=1}^m u_i\, v_i\, .
\end{eqnarray}
For a matrix $M$ we denote its minimum and maximum singular values by $\sigma_{\rm min}(M)$, $\sigma_{\rm max}(M)$ respectively. We also denote the minimum non-zero singular value of $M$ by $\hsigmamin(M)$.

The subgradient of a convex function $f:\reals^m\to\reals$
at point $x\in\reals^m$ is denoted by $\partial f(x)$. In particular,
remember that the subgradient of the $\ell_1$ norm,
$x\mapsto \|x\|_1$ is given by
\begin{eqnarray}
\partial\|x\|_1 =\big\{v\in\reals^m\mbox{ such that } |v_i|\le 1\,\forall i
\mbox{ and } x_i\neq 0\Rightarrow v_i = \sign(x_i)\big\}\, .
\end{eqnarray}

We will generally be interested in sequences of events $\{\cE_N\}$ indexed
by the problem dimensions $N$. It is understood throughout that the
underlying probability space is the one generated by the random
matrices $A(N)$, which we take to be independent across different
$N$. We say that such a sequence of events holds \emph{eventually
  almost surely} (as $N\to\infty$) if\footnote{Formally, if
$\prob(\cup_{\overline{N}\ge 1} \cap_{N\ge \overline{N}}\cE_N) =1$.} there exists a random variable
$N_0$ such that: $(i)$ $N_0$ is almost surely finite; $(ii)$ The events
$\cE_N$ hold for all $N\ge N_0$.
%
%**************************************************************
%
\subsection{A structural property of the LASSO cost function}
\label{sec:Structural}

One main challenge in the proof of Theorem \ref{thm:Risk}
lies in the fact that
the function $x\mapsto \cost_{A,y}(x)$ is not --in general--
strictly convex. Hence there can be, in principle, vectors $x$ of cost very
close to the optimum and nevertheless far from the optimum.

The following Lemma provides conditions under which this does not happen.
\begin{lemma}\label{lemma:smallcost2smallmse}
There exists a function $\xi(\eps,c_1,\dots,c_5)$ such that
the following happens.

If $x$, $\dx\in\reals^N$ satisfy the following conditions
\begin{enumerate}
\item $\|\dx\|_2\le c_1\sqrt{N}$;\label{H:Bound}
\item $\cost(x+\dx)\le \cost(x)$;\label{H:Cost}
\item There exists $\subg(\cost,x)\in\partial \cost(x)$ with $\|\subg(\cost,x)\|_2\le \sqrt{N}\, \eps$;
\label{H:Grad}
\item Let $v\equiv (1/\lambda)[A^*(y-Ax)+\subg(\cost,x)]\in \partial \|x\|_1$,
and $S(c_2)\equiv\{i\in [N]: \; |v_i|\ge 1-c_2\}$. Then,
for any $S'\subseteq [N]$, $|S'|\le c_3N$, we have
$\sigma_{\rm min}(A_{S(c_2)\cup S'})\ge c_4$;\label{H:Key}
\item The maximum singular value
of $A$ is bounded: $\sigma_{\rm max}(A)^2\le c_5$.
\label{H:SVD}
\end{enumerate}
Then $\|\dx\|_2\le \sqrt{N}\, \xi(\eps,c_1,\dots,c_5)$. Further
for any $c_1,\dots, c_5>0$, $\xi(\eps,c_1,\dots,c_5)\to 0$ as $\eps\to 0$.

Further, if $\ker(A)=\{0\}$, the same conclusion holds
under assumptions \ref{H:Bound}, \ref{H:Cost}, \ref{H:Grad}, \ref{H:SVD}.
\end{lemma}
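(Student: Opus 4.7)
My plan is to exploit the split structure of the LASSO cost: the quadratic $\tfrac12\|y-Ax\|_2^2$ controls $\|A\dx\|_2$, while the $\ell_1$ term, through the slack $1-|v_i|$ in the subgradient, controls $|\dx_i|$ on coordinates where $v\in\partial\|x\|_1$ lies strictly inside $[-1,1]$, i.e., on $S(c_2)^c$. A careful truncation of $\dx$ into a ``bulky'' and a ``diffuse tail'' part then lets hypotheses 4 and 5 combine to recover an $\ell_2$ bound on $\dx$.

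Expanding $\cost(x+\dx)-\cost(x)$ and using hypothesis 4 to write $\subg(\cost,x)=-A^*(y-Ax)+\lambda v$ with $v\in\partial\|x\|_1$, a direct computation gives
\begin{align*}
\cost(x+\dx)-\cost(x) \;=\; \subg(\cost,x)^\top\dx \;+\; \tfrac12\|A\dx\|_2^2 \;+\; \lambda\bigl[\,\|x+\dx\|_1-\|x\|_1-v^\top\dx\,\bigr].
\end{align*}
The bracket is non-negative by convexity of $\|\cdot\|_1$, and $|\subg(\cost,x)^\top\dx|\le\|\subg(\cost,x)\|_2\|\dx\|_2\le c_1\eps N$ by hypotheses 1 and 3. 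Combined with hypothesis 2, this yields the two complementary estimates
\begin{align*}
\|A\dx\|_2^2 \le 2c_1\eps N \quad\text{and}\quad \|x+\dx\|_1-\|x\|_1-v^\top\dx \le c_1\eps N/\lambda.
\end{align*}
For $i\notin S(c_2)$ one has $|v_i|<1-c_2<1$, which forces $x_i=0$ (else $v_i\in\{\pm 1\}$), so $|x_i+\dx_i|-|x_i|-v_i\dx_i=|\dx_i|-v_i\dx_i\ge c_2|\dx_i|$; on $S(c_2)$ the analogous summand is $\ge 0$. Summing and inserting the second bound gives $\|\dx_{S(c_2)^c}\|_1\le c_1\eps N/(\lambda c_2)$.

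To pass from this $\ell_1$ control back to $\ell_2$, let $S'\subseteq S(c_2)^c$ index the $\lfloor c_3 N\rfloor$ largest entries of $|\dx_{S(c_2)^c}|$ and set $T=[N]\setminus(S(c_2)\cup S')$. Every entry of $\dx_T$ has magnitude at most $\|\dx_{S(c_2)^c}\|_1/(c_3 N)$ (since each of the $c_3 N$ entries indexed by $S'$ already exceeds this threshold), hence
\begin{align*}
\|\dx_T\|_2^2 \le \|\dx_T\|_\infty\|\dx_T\|_1 \le \|\dx_{S(c_2)^c}\|_1^{\,2}/(c_3 N) = O(\eps^2 N).
\end{align*}
Hypothesis 5 then gives $\|A_T\dx_T\|_2\le\sqrt{c_5}\|\dx_T\|_2=O(\eps\sqrt N)$, and the triangle inequality yields $\|A_{S(c_2)\cup S'}\dx_{S(c_2)\cup S'}\|_2\le\|A\dx\|_2+\|A_T\dx_T\|_2=O(\sqrt{\eps N})$. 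Since $|S'|\le c_3 N$, hypothesis 4 applies and produces $\|\dx_{S(c_2)\cup S'}\|_2\le c_4^{-1}\|A_{S(c_2)\cup S'}\dx_{S(c_2)\cup S'}\|_2=O(\sqrt{\eps N})$. Adding this to the bound on $\|\dx_T\|_2$ gives $\|\dx\|_2\le\sqrt{N}\,\xi(\eps,c_1,\dots,c_5)$ with $\xi\to 0$ as $\eps\to 0$.

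For the $\ker(A)=\{0\}$ variant, hypothesis 5 gives $\sigma_{\min}(A)^2\ge c_5^{-1}>0$, so $\|A\dx\|_2^2\le 2c_1\eps N$ alone already yields $\|\dx\|_2\le\sqrt{2c_1c_5\,\eps N}$, bypassing hypothesis 4 entirely. The main subtlety is the truncation in the previous paragraph: hypothesis 4 controls $A$ only on $S(c_2)$ enlarged by a set of size at most $c_3 N$, while hypothesis 5 alone is too weak to invert $A$ on $\dx$; the top-$k$ split is precisely what allows the two to combine, with hypothesis 4 handling the bulky part $\dx_{S(c_2)\cup S'}$ and hypothesis 5 handling the diffuse tail $\dx_T$. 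Everything else is routine subgradient calculus and norm manipulations.
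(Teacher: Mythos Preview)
Your proof is correct and follows the same overall strategy as the paper: expand $\cost(x+\dx)-\cost(x)$ through the subgradient, extract both the bound $\|A\dx\|_2^2=O(\eps N)$ and the $\ell_1$-slack bound $\|\dx_{S(c_2)^c}\|_1=O(\eps N)$, then combine these via a compressed-sensing-style truncation and hypothesis~4.

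The execution differs from the paper in two noteworthy ways. First, the paper begins by decomposing $\dx=\dx^\perp+\dx^\parallel$ with $\dx^\parallel\in\ker(A)$, bounds $\dx^\perp$ directly from $\|A\dx\|_2$ via hypothesis~5, transfers the $\ell_1$ bound from $\dx$ to $\dx^\parallel$, and only then does the truncation on $\dx^\parallel$ (exploiting $A\dx^\parallel=0$ exactly). You bypass this kernel decomposition entirely: you truncate $\dx$ itself and use the triangle inequality $\|A_{S(c_2)\cup S'}\dx_{S(c_2)\cup S'}\|_2\le\|A\dx\|_2+\|A_T\dx_T\|_2$, which is cleaner. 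Second, the paper's truncation is a multi-block shelling of $S(c_2)^c$ into chunks of size $\sim c_3N$ (in the style of restricted-isometry arguments), whereas your single top-$\lfloor c_3N\rfloor$ split with the bound $\|\dx_T\|_2^2\le\|\dx_T\|_\infty\|\dx_T\|_1$ achieves the same end with less bookkeeping. Both routes give the same $\xi=O(\sqrt{\eps})$ dependence; yours is the more economical of the two.
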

\begin{proof}
Throughout the proof we denote $\xi_1, \xi_2,\dots$
functions of the constants $c_1,\dots,c_5>0$ and of $\eps$ such that
$\xi_i(\eps)\to 0$ as $\eps\to 0$ (we shall omit the dependence of
$\xi_i$ on $\eps$).

Let $S = \supp(x)\subseteq [N]$. We have
\begin{eqnarray*}
0 & \stackrel{(a)}{\ge} & \Big(\frac{ \cost(x+\dx)-\cost(x)}{N}\Big)\\
& \stackrel{(b)}{=} & \lambda\Big(\frac{\|x_S+\dx_S\|_1-\|x_S\|_1}{N}\Big)+\frac{\lambda\|\dx_{\Sco}\|_1+
\frac{1}{2}\|y-Ax-A\dx\|^2_2-\frac{1}{2}\|y-Ax\|^2_2}{N}\\
& \stackrel{(c)}{=} & \lambda\Big(\frac{\|x_S+\dx_S\|_1-\|x_S\|_1}{N}-\<\sign(x_S),\dx_S\>\Big)+
\lambda\Big(\frac{\|\dx_{\Sco}\|_1}{N}-\<v_{\Sco},\dx_{\Sco}\>\Big)
+\lambda\<v,\dx\>
-\<y-Ax,A\dx\>+\frac{\|A\dx\|_2^2}{2N}\\
& \stackrel{(d)}{=} & \lambda\Big(\frac{\|x_S+\dx_S\|_1-\|x_S\|_1}{N}-\<\sign(x_S),\dx_S\>\Big)+
\lambda\Big(\frac{\|\dx_{\Sco}\|_1}{N}-\<v_{\Sco},\dx_{\Sco}\>\Big)+
\<\subg(\cost,x),\dx\>+\frac{\|A\dx\|_2^2}{2N}\, ,
\end{eqnarray*}
where $(a)$ follows from hypothesis (\ref{H:Cost}),
$(c)$ from the fact that $v_S=\sign(x_S)$ since $v\in\partial\|x\|_1$ which gives
\[
\<\sign(x_S),r_S\>+\<v_{\Sco},r_{\Sco}\>=\<v_S,r_S\>+\<v_{\Sco},r_{\Sco}\>=\<v,r\>\,,
\]
and $(d)$ follows from the definition of $(v)$.

Using hypothesis (\ref{H:Bound}) and  (\ref{H:Grad}), we get by Cauchy-Schwarz
\begin{eqnarray}
 \lambda\Big(\frac{\|x_S+\dx_S\|_1-\|x_S\|_1}{N}-\<\sign(x_S),\dx_S\>\Big)+
\lambda\Big(\frac{\|\dx_{\Sco}\|_1}{N}-\<v_{\Sco},\dx_{\Sco}\>\Big)+\frac{\|A\dx\|_2^2}{2N}\le c_1\eps
\, .
\label{eq:three-non-neg}
\end{eqnarray}
Each of the three terms on the left-hand side is non-negative. The third one is trivial. The first one is non-negative since
\begin{eqnarray*}
\frac{\sum_{i\in S}\Big\{(x_i+r_i)\sign(x_i+r_i)-x_i\sign(x_i)-r_i\sign(x_i)\Big\}}{N}&=&\frac{\sum_{i\in S}(x_i+r_i)\big[\sign(x_i+r_i)-\sign(x_i)\big]}{N}\,,
\end{eqnarray*}
and each $(x_i+r_i)\left[\sign(x_i+r_i)-\sign(x_i)\right]$ is either equal to $0$ (when $\sign(x_i)=\sign(x_i+r_i)$) or equal to $2|x_i+r_i|$ otherwise. The second term in \eqref{eq:three-non-neg} is also non-negative since $|r_i|-v_ir_i = |r_i|[1-v_i\sign(r_i)]$ and $1\ge v_i\,\sign(r_i)$ since $|v_i|\le 1$ by definition of subgradient. Therefore,
\begin{eqnarray}
\frac{\|\dx_{\Sco}\|_1}{N}-\<v_{\Sco},\dx_{\Sco}\> &\le &\xi_1(\eps)\, ,\label{eq:Sco}\\
\|A\dx\|_2^2 & \le & N\xi_1(\eps)\, .\label{eq:AzBound}
\end{eqnarray}
Let $V_{\parallel}\subseteq\reals^{N}$ be the subspace of $\reals^N$
spanned by the right singular
vectors of $A$ with singular values $\sigma_i \le c_4/2$ (including
--eventually-- the null space of $A$), and denote by $V_{\perp}$ the
orthogonal complement of $V_{\parallel}$. Hence  $V_{\perp}$ is
spanned by right singular vectors of $A$ with singular value
$\sigma_i > c_4/2$. Let $\projpar$ and $\projperp$ denote the
orthogonal projectors on $V_{\parallel}$ and $V_{\perp}$.
Write $\dx = \dx^{\perp}+\dxparal$, with $\dxparal = \projpar \dx\in V_{\parallel}$
and  $\dx^{\perp}=\projperp\dx \in V_{\perp}$.
Also, write $A = \Apar+\Aperp \equiv A\projpar+A\projperp$
(note that $\Apar$ and $\Aperp$ have orthogonal column spaces).

It follows from Eq.~(\ref{eq:AzBound})  that
\begin{align}
\|\Apar\dxparal\|_2^2 \le N\xi_1(\eps)\,
  ,\;\;\;\;\;\;\|\Aperp\dx^{\perp}\|_2^2 \le N\xi_1(\eps)\, .
\end{align}
Since $\|\Aperp\dx^{\perp}\|_2^2\ge (c_4^2/4)\|\dx^{\perp}\|^2_2$, we have
\begin{eqnarray}
\|\dx^{\perp}\|_2^{2}\le \frac{4N\xi_1(\eps)}{c_4^2}\, .
\end{eqnarray}
In the case $V_{\parallel} = \{0\}$, the proof is concluded.
In the case $V_{\parallel}\neq\{0\}$, we need
to prove an analogous bound for $\dxparal$.
From Eq.~(\ref{eq:Sco}) together with
$\|\dx^{\perp}_{\Sco}\|_1\le\sqrt{N}\|\dx^{\perp}_{\Sco}\|_2
\le\sqrt{N}\|\dx^{\perp}\|_2 \le (2N/c_4) \sqrt{\xi_1(\eps)}$, we get
\begin{align}
&\frac{\|\dxparal_{\Sco}\|_1}{N}-\<v_{\Sco},\dxparal_{\Sco}\> \le\xi_2(\eps)\, .\label{eq:Sco2}\\
&\Aperp\dxparal= 0\, ,\label{eq:AperpX}
\end{align}
Where \eqref{eq:AperpX} follows immediately from definition of $\Aperp$ and $\dxparal$.
Now, notice that $\Sco(c_2)\subseteq\Sco$.
From Eq.~(\ref{eq:Sco2}) and definition of $S(c_2)$ it follows that
\begin{align}
\|\dxparal_{\Sco(c_2)}\|_1&\le \frac{\|\dxparal_{\Sco(c_2)}\|_1-N\<v_{\Sco(c_2)},\dxparal_{\Sco(c_2)}\>}{c_2}\label{eq:Sbar2Sbarc1}\\
&\le  Nc_2^{-1}\xi_2(\eps)\, \label{eq:Sbar2Sbarc2}.
\end{align}
In particular, inequality \eqref{eq:Sbar2Sbarc2} relies on the fact that the right hand side of \eqref{eq:Sbar2Sbarc1} can be written as $\sum_{i\in\Sco(c_2)}|\dxparal_i|(1-v_i\sign(\dxparal_i)|$ where each summand is non-negative, therefore the summation increases by replacing $\sum_{i\in\Sco(c_2)}$ with $\sum_{i\in\Sco}$. Next, let us first consider the case $|\Sco(c_2)|\ge Nc_3/2$. Then partition $\Sco(c_2) = \cup_{\ell=1}^K S_\ell$,
where $(Nc_3/2)\le |S_{\ell}|\le Nc_3$, and for each
$i\in S_{\ell}$, $j\in S_{\ell+1}$, $|\dxparal_i|\ge |\dxparal_j|$.
Also define $\Sco_+\equiv \cup_{\ell=2}^K S_{\ell}\subseteq \Sco(c_2)$.
Since, $|\dxparal_i|\le\|\dxparal_{S_{\ell-1}}\|_1/|S_{\ell-1}|$ holds for any $i\in S_{\ell}$,
we have
\begin{align*}
\|\dxparal_{\Sco_+}\|_2^2 &= \sum_{\ell=2}^K \|\dxparal_{S_{\ell}}\|_2^2\le
\sum_{\ell=2}^K |S_{\ell}|
\Big(\frac{\|\dxparal_{S_{\ell-1}}\|_1}{|S_{\ell-1}|}\Big)^2\\
&\le \frac{4}{Nc_3}\sum_{\ell=2}^{K} \|\dxparal_{S_{\ell-1}}\|_1^2\le
\frac{4}{Nc_3} \Big(\sum_{\ell=2}^{K}\|\dxparal_{S_{\ell-1}}\|_1\Big)^2\\
&\le
\frac{4}{Nc_3}\,\|\dxparal_{\Sco(c_2)}\|_1^2\le \frac{4\xi_2(\eps)^2}{c_2^2c_3}\, N
\equiv N\xi_3(\eps)\, .
\end{align*}

To conclude the proof, it is sufficient to prove an analogous bound for
$\|\dxparal_{S_+}\|_2^2$ with $S_+ = [N]\setminus \Sco_+= S(c_2)\cup S_1$.
Since $|S_1|\le Nc_3$, we have by hypothesis (\ref{H:Key})
that $\sigma_{\rm min}(A_{S_+})\ge c_4$.
By Eq.~(\ref{eq:AperpX}) we have $\Apar\dxparal=A\dxparal =
A_{S_+}\dxparal_{S_+}+A_{\Sco_+}\dxparal_{\Sco_+}$.
Therefore
\begin{align*}
c_4^2\|\dxparal_{S_+}\|_2^2\le \|A_{S_+}\dxparal_{S_+}\|^2_2= \|A_{\Sco_+}\dxparal_{\Sco_+}-\Apar\dxparal\|^2_2
\le 2c_5\|\dxparal_{\Sco_+}\|_2^2+2\frac{c_4^2}{4}\|\dxparal\|^2_2\, .
\end{align*}
In the last step we used triangular inequality together with the fact
that $\sigma_{\rm max}(A_{\Sco_+})^2\le c_5$ (by assumption
(\ref{H:SVD})) and $\sigma_{\rm max}(\Apar)\le c_4/2$ (by construction).
Using $\|\dxparal\|^2_2 =
\|\dxparal_{S_+}\|_2^2+\|\dxparal_{\Sco_+}\|_2^2$, we get
\begin{align*}
\frac{c_4^2}{2}\|\dxparal_{S_+}\|_2^2\le \Big(2
  c_5+\frac{c_4^2}{2}\Big)\|\dxparal_{\Sco_+}\|_2^2\le
\Big(2
  c_5+\frac{c_4^2}{2}\Big)N\xi_3(\eps)\, .
\end{align*}
This finishes the proof when $|\Sco(c_2)|\ge Nc_3/2$.
Note that if this assumption  does not hold then we can take $\Sco_+=\emptyset$ and
$S_+=[N]$. Hence, the result follows as a special case of above.
\end{proof}

%
%**************************************************************
%
\subsection{Proof of Theorem \ref{thm:FiniteTime}}
\label{sec:ProofFiniteTime}

The proof is based on a series of Lemmas that are used to check
the assumptions of Lemma \ref{lemma:smallcost2smallmse}

The first one
is  an upper bound on the $\ell_2$--norm of $\AMP$ estimates, and of
the $\LASSO$ estimate. Its proof is deferred to Section
\ref{sec:ProofNormBound}.
\begin{lemma}\label{lemma:norm2(xt)is_bounded}
Under the conditions of Theorem \ref{thm:Risk},
assume $\lambda>0$ and $\alpha = \alpha(\lambda)$.
Denote by $\hx(\lambda;N)$ the $\LASSO$ estimator and
by $\{x^t(N)\}$ the sequence of $\AMP$ estimates.
Then there is a constant $\finite$ such that
for all $t\ge 0$, almost surely
\begin{align}
\lim_{t\to\infty}\lim_{N\to\infty}\< x^t(N),x^t(N)\>&<\finite,\label{eq:Bound_xt}\\
\lim_{N\to\infty}\< \hx(\lambda;N),\hx(\lambda;N)\>&<\finite.
\end{align}
\end{lemma}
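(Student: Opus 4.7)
The plan is to handle the two bounds separately. For $\langle x^t,x^t\rangle$ the tool is state evolution applied to the pseudo-Lipschitz test function $\psi(a,b)=a^2$; the bound on $\langle\hat x,\hat x\rangle$ requires a direct argument based on optimality of $\hat x$ together with spectral and geometric properties of gaussian $A$.

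For the AMP iterates, observe that $\psi(a,b)=a^2$ is pseudo-Lipschitz since
\begin{equation*}
|a^2-(a')^2|\le(|a|+|a'|)\,|a-a'|\le\big(1+\|(a,b)\|_2+\|(a',b')\|_2\big)\,\|(a,b)-(a',b')\|_2.
\end{equation*}
Theorem \ref{prop:state-evolution} then yields, almost surely, $\lim_{N\to\infty}\langle x^{t+1},x^{t+1}\rangle=\E\{\eta(X_0+\tau_tZ;\theta_t)^2\}$. Because $|\eta(x;\theta)|\le|x|$ and $\tau_t\to\tau_*$ monotonically by Proposition \ref{propo:UniqFP}, the right-hand side is dominated uniformly in $t$ by $2\E\{X_0^2\}+2\sup_t\tau_t^2<\infty$, so the iterated limit $t\to\infty$ exists and is finite (equal to $\E\{\eta(X_0+\tau_*Z;\theta_*)^2\}$ by dominated convergence).

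For the LASSO estimator, compare $\cost(\hat x)\le\cost(x_0)$ to obtain
\begin{equation*}
\tfrac12\|y-A\hat x\|_2^2+\lambda\|\hat x\|_1\le\tfrac12\|w\|_2^2+\lambda\|x_0\|_1.
\end{equation*}
The converging-sequence hypotheses make the right-hand side $O(N)$, so $\|\hat x\|_1=O(N)$ and, via $\|A\hat x\|_2\le\|y\|_2+\|y-A\hat x\|_2$, also $\|A\hat x\|_2=O(\sqrt N)$ almost surely. Decompose orthogonally $\hat x=\hat x^\perp+\hat x^\parallel$ with $\hat x^\parallel\in\ker A$; Marchenko--Pastur-type bounds for iid gaussian $A$ ensure that the smallest nonzero singular value satisfies $\sigma_{\min,+}(A)\ge c(\delta)>0$ a.s., whence $\|\hat x^\perp\|_2\le\|A\hat x\|_2/c(\delta)=O(\sqrt N)$. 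For the kernel part I would invoke the $\ell_1/\ell_2$ equivalence on random gaussian subspaces: when $\delta<1$, $\ker A$ is a uniformly random $(1-\delta)N$-plane and a Kashin/Gordon-type estimate produces a.s.\ a constant $C(\delta)$ with $\|v\|_2\le C(\delta)\|v\|_1/\sqrt N$ for every $v\in\ker A$ (the statement is vacuous when $\delta\ge 1$ because then $\ker A=\{0\}$ a.s.). Combined with $\|\hat x^\parallel\|_1\le\|\hat x\|_1+\|\hat x^\perp\|_1\le\|\hat x\|_1+\sqrt N\|\hat x^\perp\|_2=O(N)$, this gives $\|\hat x^\parallel\|_2=O(\sqrt N)$, and hence $\|\hat x\|_2=O(\sqrt N)$, i.e.\ $\langle\hat x,\hat x\rangle$ stays bounded.

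The delicate step is the kernel component. Because $\cost$ fails to be strictly convex along $\ker A$ when $\delta<1$, no purely deterministic coercivity argument controls $\hat x^\parallel$, and one must bring in a probabilistic geometric property of the random gaussian kernel. The remaining ingredients---pseudo-Lipschitz state evolution and the bound on the smallest nonzero singular value of $A$---are routine.
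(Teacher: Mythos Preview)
Your proposal is correct and follows essentially the same approach as the paper: state evolution with $\psi(a,b)=a^2$ for the AMP iterates, and for $\hat x$ an orthogonal decomposition into $\ker(A)$ and its complement, controlling the complement via the smallest nonzero singular value of $A$ and the kernel component via Kashin's $\ell_1/\ell_2$ equivalence on the random gaussian subspace $\ker(A)$. The only cosmetic difference is that the paper compares $\cost(\hat x)$ to $\cost(0)$ rather than to $\cost(x_0)$; both choices yield an $O(N)$ bound on the cost and hence on $\|\hat x\|_1$.
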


The second Lemma implies  that the estimates of $\AMP$ are approximate minima,
in the sense that the cost function $\cost$ admits a small subgradient at
$x^t$, when $t$ is large. The proof is deferred to Section
\ref{sec:small-subgradient}.
\begin{lemma}\label{lemma:small-subgradient} Under the conditions of Theorem \ref{thm:Risk}, for all $t$ there exists a subgradient $\subg(\cost,x^t)$ of $\cost$ at point $x^t$ such that almost surely,
\begin{align}
\lim_{t\to\infty}\lim_{N\to\infty}\frac{1}{N}\|\subg(\cost,x^t)\|^2=0.
\end{align}
\end{lemma}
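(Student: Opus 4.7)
The plan is to exhibit a subgradient of $\cost$ at $x^{t+1}$ coming from the first-order optimality of soft thresholding, rewrite it in terms of consecutive-iterate differences using the AMP recursion, and then apply the calibration identity together with state evolution to show that each resulting piece is negligible in the double limit.

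First I would observe that, by inspection of the three branches of $\eta(\,\cdot\,;\theta_t)$, the vector
\[
v^{t+1}\equiv \frac{1}{\theta_t}\bigl(A^*z^t+x^t-x^{t+1}\bigr)
\]
lies in $\partial\|x^{t+1}\|_1$ whenever $x^{t+1}=\eta(A^*z^t+x^t;\theta_t)$, so that
\[
\subg(\cost,x^{t+1})\equiv -A^*(y-Ax^{t+1})+\lambda v^{t+1}\in\partial\cost(x^{t+1}).
\]
Substituting $y-Ax^{t+1}=z^{t+1}-\onsager_t z^t$ from the AMP update (with $\onsager_t=\delta^{-1}\<\eta'(A^*z^t+x^t;\theta_t)\>$) and collecting terms gives
\[
\subg(\cost,x^{t+1}) = -A^*(z^{t+1}-z^t)+\Bigl(\onsager_t+\tfrac{\lambda}{\theta_t}-1\Bigr)A^*z^t+\tfrac{\lambda}{\theta_t}(x^t-x^{t+1}).
\]

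I would then bound $\|\subg(\cost,x^{t+1})\|^2/N$ by three times the sum of the squared norms of these three pieces and treat them separately. The first piece satisfies $\|A^*(z^{t+1}-z^t)\|^2/N\le\sigma_{\max}(A)^2\,\|z^{t+1}-z^t\|^2/N$; since $\sigma_{\max}(A)$ is a.s.\ uniformly bounded for the Gaussian ensemble, it suffices that $\|z^{t+1}-z^t\|^2/N\to 0$ as $N\to\infty$ followed by $t\to\infty$, which follows from applying state evolution to the pseudo-Lipschitz functional $(u,v)\mapsto(u-v)^2$ on consecutive residuals together with $\tau_t,\tau_{t+1}\to\tau_*$ from Proposition \ref{propo:UniqFP}. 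The third piece is handled identically by applying Theorem \ref{prop:state-evolution} jointly to $(x^t,x^{t+1})$. For the middle piece, the scalar coefficient $(\onsager_t+\lambda/\theta_t-1)\to 0$ by the calibration identity (\ref{eq:calibration}): $\theta_t\to\theta_*$, $\onsager_t\to\onsager_*\equiv\delta^{-1}\E\{\eta'(X_0+\tau_*Z;\theta_*)\}$ by state evolution, and $\lambda/\theta_*=1-\onsager_*$ by (\ref{eq:calibration}); meanwhile $\|A^*z^t\|^2/N$ is uniformly bounded by $\sigma_{\max}(A)^2\,\|z^t\|^2/N$, controlled by the state-evolution estimate of $\|z^t\|^2/n\to\tau_t^2$ together with Lemma \ref{lemma:norm2(xt)is_bounded}.

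The main obstacle is that Theorem \ref{prop:state-evolution} as stated only covers functionals of $(x^{t+1},x_0)$ rather than joint functionals of $(x^t,x^{t+1})$ or of the residual pair $(z^t,z^{t+1})$. This is nevertheless provided by the underlying machinery in \cite{BM-MPCS-2010}, which tracks arbitrary pseudo-Lipschitz functions of tuples of consecutive AMP iterates against jointly Gaussian state-evolution limits whose two components coincide at the unique fixed point. Combined with the strict contractivity $|\partial_{\tau^2}\seF|<1$ at $\tau_*$ from Proposition \ref{propo:UniqFP}, which forces monotone convergence $\tau_t,\tau_{t+1}\to\tau_*$, this is what drives the joint-iterate differences to zero in the $t\to\infty$ limit and closes the argument.
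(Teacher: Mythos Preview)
Your construction and three-term decomposition are exactly what the paper does (up to the harmless index shift $t\leftrightarrow t+1$): same subgradient coming from the soft-thresholding optimality, same splitting into $A^*(z^{t+1}-z^t)$, a scalar multiple of $A^*z^t$, and $(\lambda/\theta_t)(x^t-x^{t+1})$, and the middle coefficient is killed by the calibration identity just as you say. So structurally the proposal is correct and matches the paper.

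The gap is in your justification of why $\|x^{t+1}-x^t\|^2/N$ and $\|z^{t+1}-z^t\|^2/N$ vanish in the double limit. Invoking two-time state evolution gives
\[
\lim_{N\to\infty}\frac{1}{N}\|x^{t+1}-x^t\|^2
= \E\bigl\{[\eta(X_0+Z_t;\theta_t)-\eta(X_0+Z_{t-1};\theta_{t-1})]^2\bigr\},
\]
with $(Z_{t-1},Z_t)$ jointly Gaussian of covariance $(\covz_{t-1,t-1},\covz_{t-1,t},\covz_{t,t})$. For this to tend to zero you need not only the diagonal convergence $\covz_{t,t}=\tau_t^2\to\tau_*^2$ (which is what ``$\tau_t,\tau_{t+1}\to\tau_*$'' and the contractivity of $\seF$ give you) but also the off-diagonal convergence $\covz_{t-1,t}\to\tau_*^2$. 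The latter does \emph{not} follow from the one-dimensional contractivity $|\partial_{\tau^2}\seF(\tau_*^2,\alpha\tau_*)|<1$; it requires a separate analysis of the two-time recursion, carried out in the paper as Lemma~\ref{lemma:Tau_Plus}. There the relevant contraction constant for the off-diagonal part is
\[
\G_*'(0)=\frac{1}{\delta}\,\E\{\eta'(X_0+\tau_*Z;\alpha\tau_*)\},
\]
which is strictly less than $1$ precisely because $\lambda>0$ in the calibration~\eqref{eq:calibration}---a different inequality from $\seF'(\tau_*^2)<1$. The paper packages the conclusion as Lemma~\ref{lemma:Convergence} and simply cites it in the proof of Lemma~\ref{lemma:small-subgradient}; your last paragraph gestures at this but conflates the two contractivity conditions, so as written it does not close the argument.
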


The next lemma implies that sub-matrices of $A$ constructed
using the first $t$ iterations of the $\AMP$ algorithm
are non-singular (more precisely, have singular values bounded away
from $0$). The proof can be found in Section \ref{sec:ProofMinS}.
\begin{lemma}\label{lemma:MinS}
Let $S\subseteq [N]$ be measurable on the $\sigma$-algebra $\sigal{S}_t$
generated by $\{z^0,\dots, z^{t-1}\}$ and
$\{x^0+A^*z^0,\dots,x^{t-1}+A^*z^{t-1}\}$
and assume $|S|\le N(\delta-c)$ for some $c>0$. Then there
exists $a_1=a_1(c)>0$ (independent of $t$) and
$a_2=a_2(c,t)>0$ (depending on $t$ and $c$)
such that
\begin{eqnarray}
\min_{S'}\big\{\sigma_{\rm min}(A_{S\cup S'})\, :\;\; S'\subseteq [N]\, ,
\;|S'|\le a_1N\big\}
\ge a_2\,,
\end{eqnarray}
eventually almost surely as $N\to\infty$.
\end{lemma}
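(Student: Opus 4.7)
The plan is to bypass the $\sigal{S}_t$-measurability hypothesis by noting that the event
$$\bigl\{\exists\,S'\subseteq[N],\,|S'|\le a_1 N\colon \sigma_{\min}(A_{S\cup S'})<a_2\bigr\}\subseteq \bigl\{\exists\, T\subseteq[N],\,|T|\le n-(c/2)N\colon \sigma_{\min}(A_T)<a_2\bigr\}$$
as soon as I set $a_1=c/2$: indeed $|S\cup S'|\le |S|+|S'|\le N(\delta-c)+(c/2)N = n-(c/2)N$. So it suffices to bound the probability of the right-hand event, which does not involve $S$ or the AMP filtration at all. This approach will in fact produce an $a_2$ depending only on $c$ and $\delta$, slightly stronger than the $t$-dependent constant allowed by the statement.

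Set $\beta=c/(2\delta)>0$. Every $T$ in the collection above satisfies $|T|/n\le 1-\beta$, and $A_T$ is an $n\times|T|$ Gaussian matrix with iid $\normal(0,1/n)$ entries. The key analytic input is a small-ball estimate for $\sigma_{\min}(A_T)$: using Edelman's joint density formula for the singular values of a rectangular Gaussian matrix (or the Rudelson--Vershynin bound, sharpened in the Gaussian case by controlling $\|A_T\|_{\rm op}$ via Gaussian concentration), there exists a universal constant $C>0$ such that for every $\eps\in(0,1)$,
$$\prob\bigl(\sigma_{\min}(A_T)\le \eps\beta/2\bigr)\le (C\eps)^{\beta n+1}+2^{-2N}.$$
The number of $T\subseteq[N]$ with $|T|\le n-(c/2)N$ is at most $2^N$, so a union bound gives
$$\prob\bigl(\exists\,T\colon \sigma_{\min}(A_T)\le \eps\beta/2\bigr)\le 2^N(C\eps)^{\beta\delta N+1}+2^{-N}.$$
Choosing $\eps$ small enough that $C\eps<2^{-2/(\beta\delta)}$ forces the first term to be $\le 2^{-N}$, so the entire probability tends to $0$ as $N\to\infty$. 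Setting $a_2=\eps\beta/2$, a positive constant depending only on $c$ and $\delta$, completes the argument.

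The step I expect to be the main obstacle is establishing the small-ball estimate with the polynomial exponent $\beta n+1$. A Davidson--Szarek-type concentration inequality around the deterministic limit $1-\sqrt{|T|/n}$ only decays like $\exp(-c\beta^2 n)$, which cannot beat the $2^N$ combinatorial cost of the union bound when $\beta$ is small. The advantage of Edelman's formula (or the corresponding Rudelson--Vershynin bound) is precisely that it exploits the polynomial Jacobian factor in the joint singular-value density, yielding a left tail decaying as $\eps^{\beta n}$ that can be driven below $2^{-N}$ by choosing $\eps$ small but independent of $N$. Once this polynomial small-ball estimate is in hand, the union bound is completely routine.
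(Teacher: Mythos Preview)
Your argument is correct and takes a genuinely different route from the paper. You observe that the $\sigal{S}_t$-measurability of $S$ is irrelevant once one takes a union bound over \emph{all} subsets $T\subseteq[N]$ of size at most $n-(c/2)N$; the entire burden then falls on a small-ball estimate for $\sigma_{\min}$ of a tall Gaussian matrix that decays like $(C\eps)^{\beta n}$, which is strong enough to beat the $2^N$ combinatorial factor. You correctly identify that a Davidson--Szarek exponential bound is insufficient here and that the polynomial bound (Edelman, or Rudelson--Vershynin with the Gaussian-specific operator-norm sharpening to kill the additive $c_0^n$ term) is essential. As a bonus, your $a_2$ depends only on $(c,\delta)$, not on $t$.

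The paper's proof is structurally quite different. It never attempts a union over all large $T$; instead it conditions on $\sigal{S}_t$, writes $A|_{\sigal{S}_t}=E_t+P_{M_t}^\perp\tA P_{Q_t}^\perp$, and proves (Lemma~\ref{lemma:ConcreteMinS}) that for any \emph{fixed} $S$ with $|S|\le N(\delta-\gamma)$ the conditional probability of $\min_{\|v\|=1,\supp(v)\subseteq S}\|Av\|\le\alpha_2$ is at most $e^{-N\alpha_1}$. The union bound is then only over the $e^{Nh(a_1)}$ choices of the small set $S'$, and $a_1$ is chosen small enough that $h(a_1)<\alpha_1$. This means the paper only needs \emph{exponential} singular-value tail bounds (it cites~\cite{LitvakEtAl}), at the price of a considerably more intricate argument involving a Pythagorean inequality via random-projection concentration (Lemma~\ref{lemma:Pitagora}) and control of the deterministic part $E_t$ through the matrices $Q_t^*Q_t$, $M_t^*M_t$, etc.\ (Lemma~\ref{lemma:Svalues_t}). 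The dependence of these quantities on $t$ is what makes the paper's $a_2$ depend on $t$. Your approach trades that structural machinery for a sharper random-matrix input.
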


We will apply this lemma to a specific choice of the set $S$.
Namely, defining
\begin{eqnarray}
v^t\equiv\f{1}{\theta_{t-1}}(x^{t-1}+A^*z^{t-1}-x^t)\, ,\label{eq:vtDef}
\end{eqnarray}
we will then consider the set
\begin{eqnarray}
S_t(\gamma) \equiv \big\{\,i\in [N]\, :\; |v^t_{i}|\ge 1-\gamma\,\big\}\, ,
\label{eq:StDef}
\end{eqnarray}
for $\gamma\in (0,1)$. Our last lemma shows that this sequence of
sets $S_t(\gamma)$ `converges' in the following sense. The proof
can be found in Section \ref{sec:ConvergenceSupport}.
\begin{lemma}\label{lemma:ConvergenceSupport}
Fix $\gamma\in(0,1)$ and let the sequence $\{S_t(\gamma)\}_{t\ge 0}$
be defined as in Eq.~(\ref{eq:StDef}) above.
For any $\xi>0$ there exists $t_*=t_*(\xi,\gamma)<\infty$ such that,
for all $t_2\ge t_1\ge t_*$ fixed, we have
\begin{eqnarray}
|S_{t_2}(\gamma)\setminus S_{t_1}(\gamma)|< N\xi\,,
\end{eqnarray}
eventually almost surely as $N\to\infty$.
\end{lemma}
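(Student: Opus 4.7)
The first step is to re-express $S_t(\gamma)$ as a level set of the pre-thresholding iterate $u^t\equiv A^*z^{t-1}+x^{t-1}$. Since $x^t=\eta(u^t;\theta_{t-1})$, a direct case analysis of the soft-threshold function gives
\begin{equation*}
v^t_i \;=\; \frac{u^t_i-\eta(u^t_i;\theta_{t-1})}{\theta_{t-1}} \;=\;
\begin{cases}\sign(u^t_i) & |u^t_i|>\theta_{t-1},\\ u^t_i/\theta_{t-1} & |u^t_i|\le\theta_{t-1},\end{cases}
\end{equation*}
and in both cases $|v^t_i|\ge 1-\gamma \Longleftrightarrow |u^t_i|\ge \theta_{t-1}(1-\gamma)$. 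Consequently
\begin{equation*}
\frac{|S_{t_2}(\gamma)\setminus S_{t_1}(\gamma)|}{N}
\;=\;\frac{1}{N}\sum_{i=1}^N \ind\{|u^{t_2}_i|\ge\theta_{t_2-1}(1-\gamma)\}\,\ind\{|u^{t_1}_i|<\theta_{t_1-1}(1-\gamma)\}.
\end{equation*}

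To apply state evolution I replace the sharp indicators by a pseudo-Lipschitz upper envelope. For $\eps>0$, let $\phi_\eps:\reals\to[0,1]$ be the $(1/\eps)$-Lipschitz ramp equal to $0$ on $(-\infty,-\eps]$, equal to $1$ on $[0,\infty)$, and affine on $[-\eps,0]$. The bounded Lipschitz function
\begin{equation*}
\psi_\eps(u,u';a_1,a_2)\;\equiv\;\phi_\eps(|u|-a_2)\,\bigl(1-\phi_\eps(|u'|-a_1-\eps)\bigr)
\end{equation*}
dominates the indicator product with $(a_1,a_2)=(\theta_{t_1-1}(1-\gamma),\theta_{t_2-1}(1-\gamma))$ and is pseudo-Lipschitz on $\reals^2$. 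The multi-index form of Theorem \ref{prop:state-evolution} established in \cite{BM-MPCS-2010}, applied to $\psi_\eps$ viewed as a pseudo-Lipschitz function of $(u^{t_2}_i,u^{t_1}_i,x_{0,i})$, yields almost surely
\begin{equation*}
\lim_{N\to\infty}\frac{1}{N}\sum_i \psi_\eps(u^{t_2}_i,u^{t_1}_i;a_1,a_2) \;=\; \E\bigl[\psi_\eps(U_{t_2},U_{t_1};a_1,a_2)\bigr],
\end{equation*}
where $U_{t_k}=X_0+\tau_{t_k-1}Z^{(k)}$ and $(Z^{(1)},Z^{(2)})$ is a centred Gaussian pair with unit variances and cross-covariance determined by the bivariate state-evolution recursion of \cite{BM-MPCS-2010}.

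The final step is to let $t_1,t_2\to\infty$. Proposition \ref{propo:UniqFP} gives $\tau_{t-1}\to\tau_*$ and hence $\theta_{t-1}=\alpha\tau_{t-1}\to\theta_*$, while the bivariate state-evolution analysis (see the obstacle paragraph below) yields $\E[(U_{t_1}-U_{t_2})^2]\to 0$, so $(U_{t_1},U_{t_2})$ converges in $L^2$ to $(U_*,U_*)$ with $U_*=X_0+\tau_*Z$. Since $\psi_\eps$ is bounded and continuous, bounded convergence gives
\begin{equation*}
\limsup_{t_1,t_2\to\infty}\E\bigl[\psi_\eps(U_{t_2},U_{t_1};a_1,a_2)\bigr]\;\le\;\prob\bigl(|U_*|\in[\theta_*(1-\gamma)-\eps,\,\theta_*(1-\gamma)+\eps]\bigr),
\end{equation*}
which vanishes as $\eps\downarrow 0$ because $U_*$ has a bounded density on $\reals$ (convolution of $p_{X_0}$ with a positive-variance Gaussian). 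Fixing $\eps$ so that the right-hand side is $<\xi/2$ and then $t_*=t_*(\xi,\gamma)$ large enough that the $t_1,t_2$-limit of the expectation is $<\xi$, Markov's inequality delivers $\prob\bigl(|S_{t_2}(\gamma)\setminus S_{t_1}(\gamma)|\ge N\xi\bigr)\to 0$ for all $t_2\ge t_1\ge t_*$.

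The principal obstacle is the cross-covariance statement $\E[(U_{t_1}-U_{t_2})^2]\to 0$: although Proposition \ref{propo:UniqFP} controls the marginals through the scalar fixed point $\tau_*$, the cross-covariance $\E[Z^{(1)}Z^{(2)}]$ is governed by a separate two-index recursion that must be analysed directly. One shows it contracts to $1$ by exploiting the derivative bound $|\partial_{\tau^2}\seF(\tau^2,\alpha\tau)|<1$ at $\tau_*$ together with the conditioning-technique bookkeeping of \cite{BM-MPCS-2010}, which identifies the diagonal $\tau_*^2 J$ as the unique stable fixed point of the bivariate map. Once this is in place, the remaining ingredients — pseudo-Lipschitz smoothing, bounded convergence, and Markov — are routine.
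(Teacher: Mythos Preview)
Your argument follows essentially the same route as the paper's. Both reduce $|S_{t_2}(\gamma)\setminus S_{t_1}(\gamma)|/N$ to a functional of the pair $(u^{t_1}_i,u^{t_2}_i)$, invoke the two-time state evolution (the paper's Theorem~\ref{prop:state-evolution-2times}) to pass to an expectation over $(X_0+Z_{t_1-1},X_0+Z_{t_2-1})$ with jointly Gaussian $(Z_{t_1-1},Z_{t_2-1})$, and then control this by showing $\covz_{t_1-1,t_1-1}-2\covz_{t_1-1,t_2-1}+\covz_{t_2-1,t_2-1}\to 0$. The paper handles the indicator directly via weak convergence plus the fact that $X_0+\tau Z$ has a density, whereas you smooth with a Lipschitz ramp; both are fine and lead to the same endpoint.

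On your obstacle paragraph: you have correctly located the crux, but the contraction mechanism you cite is not quite the right one. The bound $|\partial_{\tau^2}\seF(\tau^2,\alpha\tau)|<1$ at $\tau_*$ governs only the \emph{diagonal} entries $\covz_{t,t}$. The off-diagonal drift is controlled by a different quantity, namely $\G_*'(0)=\delta^{-1}\E\{\eta'(X_0+\tau_*Z;\alpha\tau_*)\}$, which is $<1$ precisely because $\lambda>0$ (via the calibration relation~\eqref{eq:calibration}). The paper packages this as Lemma~\ref{lemma:Tau_Plus}: it recasts the two-time recursion as a three-dimensional map $y_t\mapsto\G(y_t)$ with $y_{t,3}=\covz_{t-1,t-1}-2\covz_{t-1,t}+\covz_{t,t}$, proves global convergence by a monotonicity argument for correlated Gaussians (their Lemma~\ref{lemma:box-monotonicity}), and then reads off exponential contraction from the Jacobian spectral radius $\max\{\seF'(\tau_*^2),\G_*'(0)\}<1$. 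Your sketch would benefit from naming $\G_*'(0)$ explicitly and noting that local linearisation alone is not enough---one also needs the global monotonicity step to get into the basin.
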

The above two lemmas imply the following.
\begin{proposition}\label{propo:PSD}
There exist constants $\gamma_1\in(0,1)$, $\gamma_2$, $\gamma_3>0$ and $t_{\rm min}<\infty$ such that,
for any $t\ge t_{\rm min}$,
\begin{eqnarray}
\min_{S_1}\big\{\sigma_{\rm min}(A_{S_t(\gamma_1)\cup S'})\, :\;\; S'\subseteq [N]\, ,
\;|S'|\le \gamma_2 N\big\}
\ge \gamma_3\,,
\end{eqnarray}
eventually almost surely as $N\to\infty$.
\end{proposition}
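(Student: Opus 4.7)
The plan is to apply Lemma~\ref{lemma:MinS} at a single sufficiently large anchor time $t_1$ and, for all larger $t$, treat $S_t(\gamma_1)$ as a small perturbation of $S_{t_1}(\gamma_1)$ via Lemma~\ref{lemma:ConvergenceSupport}, so that a single pair of constants $(a_1(c),a_2(c,t_1))$ suffices uniformly in $t\ge t_1$.

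First I choose $\gamma_1$. From (\ref{eq:dmm}), (\ref{eq:eta-def}) and (\ref{eq:vtDef}) one checks that $v^t_i=\sign(u^{t-1}_i)$ when $|u^{t-1}_i|>\theta_{t-1}$ and $v^t_i=u^{t-1}_i/\theta_{t-1}$ otherwise, where $u^{t-1}\equiv x^{t-1}+A^*z^{t-1}$. Hence $S_t(\gamma)=\{i:|u^{t-1}_i|\ge(1-\gamma)\theta_{t-1}\}$. The state evolution machinery of \cite{BM-MPCS-2010} underlying Theorem~\ref{prop:state-evolution} shows that the empirical distribution of $u^{t-1}$ is asymptotically that of $X_0+\tau_{t-1}Z$. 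Sandwiching the indicator of $\{|\cdot|\ge(1-\gamma)\theta_{t-1}\}$ between pseudo-Lipschitz approximations and applying Proposition~\ref{propo:UniqFP} (so $\tau_t\to\tau_*$, $\theta_t\to\theta_*$), one gets almost surely
\begin{align*}
\limsup_{t\to\infty}\lim_{N\to\infty}\frac{|S_t(\gamma)|}{N}\ \le\ \prob\bigl\{|X_0+\tau_*Z|\ge(1-\gamma)\theta_*\bigr\}.
\end{align*}
At $\gamma=0$, the calibration identity (\ref{eq:calibration}) together with $\lambda>0$ gives $\prob\{|X_0+\tau_*Z|>\theta_*\}=\E\{\eta'(X_0+\tau_*Z;\theta_*)\}<\delta$. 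Since $\tau_*>0$ the law of $X_0+\tau_*Z$ is atomless, so by continuity I fix $\gamma_1\in(0,1)$ so small that $\prob\{|X_0+\tau_*Z|\ge(1-\gamma_1)\theta_*\}\le \delta-2c$ for some $c>0$.

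Next, let $a_1=a_1(c)>0$ be the (purely $c$-dependent) constant promised by Lemma~\ref{lemma:MinS}, and let $t_*=t_*(a_1/2,\gamma_1)$ be the integer from Lemma~\ref{lemma:ConvergenceSupport}. Choose $t_1\ge t_*$ large enough that $|S_{t_1}(\gamma_1)|\le N(\delta-c)$ with probability tending to one; such $t_1$ exists by the limsup bound above. Since $v^{t_1}$ is a deterministic function of $u^{t_1-1}$, the set $S_{t_1}(\gamma_1)$ is $\sigal{S}_{t_1}$-measurable, and truncation (replacing it by an arbitrary fixed set of size $\lfloor N(\delta-c)\rfloor$ on the low-probability bad event) yields a set $\tilde S$ that is $\sigal{S}_{t_1}$-measurable and deterministically satisfies $|\tilde S|\le N(\delta-c)$. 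Lemma~\ref{lemma:MinS} applied to $\tilde S$ confirms $a_1$ and produces $a_2=a_2(c,t_1)>0$. Set $t_{\min}=t_1$, $\gamma_2=a_1/2$ and $\gamma_3=a_2$.

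Finally, for $t\ge t_{\min}$ and any $S'\subseteq[N]$ with $|S'|\le\gamma_2 N$, on the high-probability event where $\tilde S=S_{t_1}(\gamma_1)$ and $|S_t(\gamma_1)\setminus S_{t_1}(\gamma_1)|\le Na_1/2$ (the latter being the conclusion of Lemma~\ref{lemma:ConvergenceSupport} applied with $t_1$ and $t_2=t$, both $\ge t_*$), one has
\begin{align*}
S_t(\gamma_1)\cup S'\ \subseteq\ S_{t_1}(\gamma_1)\cup S'',\qquad S''\equiv (S_t(\gamma_1)\setminus S_{t_1}(\gamma_1))\cup S',
\end{align*}
with $|S''|\le Na_1$. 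Since deleting columns can only increase the minimum singular value, Lemma~\ref{lemma:MinS} gives $\sigma_{\min}(A_{S_t(\gamma_1)\cup S'})\ge\sigma_{\min}(A_{\tilde S\cup S''})\ge a_2=\gamma_3$, which is the proposition. The main obstacle is that the constant $a_2(c,t)$ in Lemma~\ref{lemma:MinS} depends on $t$, preventing a direct uniform-in-$t$ application; the crucial move is to freeze the anchor set at the fixed time $t_1$ and to absorb the later-time increment $S_t(\gamma_1)\setminus S_{t_1}(\gamma_1)$ into the free set $S'$, using Lemma~\ref{lemma:ConvergenceSupport} to control its size.
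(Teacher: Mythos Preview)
Your proof is correct and follows essentially the same approach as the paper: fix $\gamma_1$ via the calibration identity so that $|S_t(\gamma_1)|\le N(\delta-c)$ eventually, apply Lemma~\ref{lemma:MinS} at a single anchor time $t_1=t_{\min}$, and use Lemma~\ref{lemma:ConvergenceSupport} to absorb $S_t(\gamma_1)\setminus S_{t_1}(\gamma_1)$ into the free set $S'$ with budget $a_1/2$. Your truncation step and the explicit remark that deleting columns only increases $\sigma_{\min}$ are details the paper leaves implicit, but the strategy is the same.
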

\begin{proof}
First notice that, for any fixed $\gamma$,
the set $S_t(\gamma)$ is  measurable on $\sigal{S}_t$.
Indeed by Eq.~(\ref{eq:dmm}) $\sigal{S}_t$
contains $\{x^0,\dots, x^t\}$ as well,
and hence it contains  $v^t$ which is
a  linear combination of $x^{t-1}+A^*z^{t-1}$, $x^t$.
Finally $S_t(\gamma)$ is obviously a measurable function of $v^t$.

Using Lemma \ref{lem:elephant}(b) the empirical distribution of $(x_0-A^*z^{t-1}-x^{t-1},x_0)$
converges weakly to $(\tau_{t-1}Z,X_0)$ for $Z\sim \normal(0,1)$ independent of $X_0\sim p_{X_0}$.
(Following the notation of \cite{BM-MPCS-2010}, we let
$h^t=x_0-A^*z^{t-1}-x^{t-1}$.)
Therefore, for any constant $\gamma$ we have almost surely
\begin{align}
\lim_{N\to\infty}\f{|S_t(\gamma)|}{N}&= \lim_{N\to\infty}\f{1}{N}\sum_{i=1}^N \ind_{\left\{\f{1}{\theta_{t-1}}\big|x_i^{t-1}+[A^*z^{t-1}]_i-x_i^t\big|\geq 1-\gamma\right\}}\\
&= \lim_{N\to\infty}\f{1}{N}\sum_{i=1}^N \ind_{\left\{\f{1}{\theta_{t-1}}\big|x_{0,i}-h_i^t-\eta(x_{0,i}-h_i^t,\theta_{t-1})\big|\geq 1-\gamma\right\}}\\
&= \prob\left\{\f{1}{\theta_{t-1}}\big|X_0+\tau_{t-1}Z-\eta(X_0+\tau_{t-1}Z,\theta_{t-1})\big|\geq 1-\gamma\right\} \,.\label{eq:h-x_0-state-evolution}
\end{align}
The last equality follows from the weak convergence of the empirical
distribution of $\{(h_i,x_{0,i})\}_{i\in [N]}$
(from Lemma \ref{lem:elephant}(b), which takes
the same form as Theorem \ref{thm:FiniteTime}),
together with the absolute continuity of the
distribution of $|X_0+\tau_{t-1}Z-\eta(X_0+\tau_{t-1}Z,\theta_{t-1})|$.

Now, combining
\[
\Big|X_0+\tau_{t-1}Z-\eta(X_0+\tau_{t-1}Z,\theta_{t-1})\Big|=
\left\{
\begin{array}{ll}
\theta_{t-1}&{\rm when}~~~|X_0+\tau_{t-1}Z|\geq \theta_{t-1}\,,\\
|X_0+\tau_{t-1}Z|&{\rm otherwise}\,,
\end{array}
\right.
\]
and Eq. \eqref{eq:h-x_0-state-evolution} we obtain almost surely
\begin{align}
\lim_{N\to\infty}\f{|S_t(\gamma)|}{N}&=\E\left\{\eta'(X_0+\tau_{t-1}Z,\theta_{t-1})\right\}+\prob\Big\{
(1-\gamma)\leq \f{1}{\theta_{t-1}}|X_0+\tau_{t-1}Z|\leq 1\Big\}.
\end{align}
It is easy to see that the second term $\prob\left\{1-\gamma\leq (1/\theta_{t-1})|X+\tau_{t-1}Z|\leq 1\right\}$ converges to $0$ as $\gamma\to 0$.
On the other hand, using Eq. \eqref{eq:calibration} and
the fact that $\lambda(\alpha)>0$ the first term will be strictly smaller than
$\delta$ for large enough $t$.
Hence, we can choose constants $\gamma_1\in(0,1)$ and $c>0$ such that
\begin{eqnarray}
|S_t(\gamma_1)|< N(\delta - c)\, .
\end{eqnarray}
eventually almost surely as $N\to\infty$,
for all fixed $t$ larger than some $t_{{\rm min},1}(c)$.

For any $t\ge t_{{\rm min},1}(c)$ we can apply Lemma \ref{lemma:MinS}
for some $a_1(c)$, $a_2(c,t)>0$.
Fix $c>0$ and let $a_1=a_1(c)$ be fixed as well.
Let $t_{\rm min} =\max(t_{{\rm min},1},
t_{*}(a_1/2,\gamma_1))$ (with $t_*(\,\cdot\,)$ defined as per
Lemma \ref{lemma:ConvergenceSupport}).
Take $a_2=a_2(c,t_{\rm min})$. Obviously $t\mapsto a_2(c,t)$
is non-increasing.
Then we have,
by Lemma \ref{lemma:MinS}
\begin{eqnarray}
\min\big\{\sigma_{\rm min}(A_{S_{t_{\rm min}}(\gamma_1)\cup S'})\, :\;\;
S'\subseteq [N]\, ,\;|S'|\le a_1N\big\}
\ge a_2\, ,
\end{eqnarray}
and by Lemma \ref{lemma:ConvergenceSupport}
\begin{eqnarray}
|S_{t}(\gamma_1)\setminus S_{t_{\rm min}}(\gamma_1)|\leq Na_1/2,
\end{eqnarray}
where both events hold eventually almost surely as $N\to\infty$.
The claim follows with $\gamma_2 = a_1(c)/2$ and $\gamma_3=a_2(c,t_{\rm min})$.
\end{proof}

We are now in position to prove Theorem \ref{thm:FiniteTime}.
\begin{proof}[Proof of Theorem \ref{thm:FiniteTime}]
We apply Lemma \ref{lemma:smallcost2smallmse}
to $x= x^t$, the $\AMP$ estimate and $r = \hx-x^t$ the distance
from the $\LASSO$ optimum. The thesis follows by checking
conditions 1--5. Namely we need to show that there
exists constants $c_1,\dots,c_5>0$ and, for each $\ve>0$
some $t=t(\ve)$ exists such that 1--5 hold eventually almost surely
as $N\to\infty$.

\vspace{0.2cm}

\emph{Condition 1} holds by Lemma \ref{lemma:norm2(xt)is_bounded}.

\vspace{0.2cm}

\emph{Condition 2} is immediate since $x+r = \hx$ minimizes $\cost(\,\cdot\,)$.

\vspace{0.2cm}

\emph{Condition 3} follows from Lemma \ref{lemma:small-subgradient}
with $\eps$ arbitrarily small for  $t$ large enough.

\vspace{0.2cm}

\emph{Condition 4.}
Notice that this condition only needs to be verified for $\delta<1$.

Take $v=v^t$ as defined in Eq.~(\ref{eq:vtDef}).
Using the definition (\ref{eq:dmm}), it is easy to check that
$|v_{i}^t|\le 1$ if $x_{i}^t = 0$ and $v_{i}^t = \sign(x^t_i)$ otherwise.
In other words $v^t\in\partial \|x\|_1$ as required.
Further by inspection of the proof of Lemma \ref{lemma:small-subgradient},
it follows that  $v^t= (1/\lambda)[A^*(y-Ax^t)+\subg(\cost,x^t)]$,
with $\subg(\cost,x^t)$ the subgradient bounded in that lemma
(cf. Eq.~(\ref{eq:s(xt)_defined})).
The condition then holds by Proposition \ref{propo:PSD}.

\vspace{0.2cm}

\emph{Condition 5} follows from standard limit theorems on the singular values
of Wishart matrices (cf. Theorem \ref{prop:marchenko-pastur}).
\end{proof}
%
%**************************************************************
%
\section{State evolution estimates}

This section contains a reminder of the state-evolution method
developed in \cite{BM-MPCS-2010}. For greater convenience of the
reader,  we also restate two lemmas from  \cite{BM-MPCS-2010} (namely,
 Lemmas \ref{lem:elephant} and \ref{lem:elephant}) in appendix \ref{sec:FromMPCS}. We will use these two Lemmas throughout our analysis.

We also state some extensions of those
results that will be proved in the appendices.
%
%********************************************
%
\subsection{State evolution}

$\AMP$, cf. Eq.~(\ref{eq:dmm}) is a special case of the general iterative procedure given by  Eq. (3.1) of \cite{BM-MPCS-2010}. This takes the general form
\begin{eqnarray}
h^{t+1}&=& A^*m^t-\xi_t\, q^t\, ,\;\;\;\;\;\;\;\;
m^t=g_t(b^t,w)\, ,\nonumber\\
b^t&=& A\,q^t-\lambda_t m^{t-1} \, ,\;\;\;\;\;\;\;
q^t=f_t(h^t,x_0)\, ,\label{eq:mpMain}
\end{eqnarray}
where $\xi_t = \< g'(b^t,w)\>$, $\lambda_t=\f{1}{\delta}
\< f'_t(h^{t},x_0)\>$ (both derivatives are with respect to the first
argument).

This reduction can be seen by defining
\begin{align}
h^{t+1} &= x_0-(A^*z^t+x^t)\, ,\label{eq:h-as-z-and-x}\\
q^{t} &= x^t-x_0\, ,\label{eq:q-as-x}\\
b^{t} &= w-z^t\, ,\label{eq:b-as-z}\\
m^{t} &= -z^t\, ,\label{eq:m-as-z}
\end{align}
where
\begin{eqnarray}
f_t(s,x_0)=\eta_{t-1}(x_0-s)-x_0\, ,\;\;\;\;\;\;\;\;\;\;
g_t(s, w)=s-w\, ,
\end{eqnarray}
and the initial condition is $q^0=-x_0$.

Regarding $h^{t},b^t$ as column vectors, the equations for $b^0,\ldots,b^{t-1}$ and $h^1,\ldots,h^{t}$ can be written in matrix form as:
\begin{align}
\underbrace{\left[h^1+\xi_0q^0|h^2+\xi_1q^1|\cdots|h^t+\xi_{t-1}q^{t-1}\right]}_{X_t}&=A^*\underbrace{[m^0|\ldots|m^{t-1}]}_{M_t}\,,\label{eq:X=A*M}\\
\underbrace{\left[b^0|b^1+\lambda_1m^0|\cdots|b^{t-1}+\lambda_{t-1}m^{t-2}\right]}_{Y_t}&=A\underbrace{[q^0|\ldots|q^{t-1}]}_{Q_t}\,.\label{eq:Y=AQ}
\end{align}
or in short $Y_t=AQ_t$ and $X_t=A^*M_t$.

Following \cite{BM-MPCS-2010}, we
define $\sigal{S}_{t}$ as the $\sigma$-algebra generated
by $b^0,\ldots,b^{t-1}$, $m^0,\ldots,m^{t-1}$, $h^1,\ldots,h^{t}$,
and $q^0,\ldots,q^{t}$.
The conditional distribution of the random matrix $A$
given the $\sigma$-algebra ${\sigal{S}_{t}}$, is given by
\begin{align}
A|_{\sigal{S}_{t}}&\deq E_{t} + \cP_{t}(\tA) \label{eq:Conditional_A}.
\end{align}
Here $\tA\deq A$ is a random matrix independent of ${\sigal{S}_{t}}$,
and $E_{t}=\E(A|{\sigal{S}_{t}})$ is given by
\begin{align}
E_{t}&=Y_{t}(Q_{t}^*Q_{t})^{-1}Q_{t}^*+M_{t}(M_{t}^*M_{t})^{-1}X_{t}^*-M_{t}(M_{t}^*M_{t})^{-1}M_{t}^*Y_{t}(Q_{t}^*Q_{t})^{-1}Q_{t}^*\, .\label{eq:Et}
\end{align}
Further, $\cP_{t}$ is the orthogonal projector onto subspace $\mathrm{V}_{t}=\{A|AQ_{t}=0,A^*M_{t}=0\}$, defined by
\[
\cP_{t}(\tA) =P_{M_{t}}^\perp {\tA} P_{Q_{t}}^\perp.
\]
Here $P_{M_{t}}^\perp = I-P_{M_{t}}$, $P_{Q_{t}}^\perp=I-P_{Q_{t}}$, and $P_{Q_{t}}$, $P_{M_{t}}$ are orthogonal projector onto column spaces of $Q_{t}$ and $M_{t}$ respectively.

Before proceeding, it is convenient to introduce  the notation
\[
\onsager_t\equiv\f{1}{\delta}\<\eta'(A^*z^{t-1}+x^{t-1};\theta_{t-1})\>
\]
to denote the coefficient of $z^{t-1}$ in Eq.~\eqref{eq:dmm}.
Using $h^t= x_0- A^*z^{t-1}-x^{t-1}$ and Lemma \ref{lem:elephant}(b)
(proved in  \cite{BM-MPCS-2010}) we get, almost surely,
\begin{eqnarray}
\label{eq:LimOmega_t}
\lim_{N\to\infty} \omega_t = \omega^{\infty}_t\equiv
\f{1}{\delta} \E\big[
\eta'(X_0+\tau_{t-1}Z;\theta_{t-1})\big]\, .
\end{eqnarray}
%
%\label{rem:discont-state-evolution}
Notice that the function $\eta'(\,\cdot\,;\theta_{t-1})$ is discontinuous
and therefore Lemma \ref{lem:elephant}(b) does not apply
immediately. On the other hand, this implies that the empirical distribution of
$\{(A^*z^{t-1}_i+x^{t-1}_i,x_{0,i})\}_{1\le i\le N}$ converges weakly to
the distribution of $(X_0+\tau_{t-1}Z,X_0)$. The claim follows from
the fact that $X_0+\tau_{t-1}Z$ has a density, together with the
standard properties of weak convergence.

%
%********************************************
%
\subsection{Some consequences and generalizations}

We begin with a simple calculation, that will be useful.
\begin{lemma}\label{lemma:NormZ}
If $\{z^t\}_{t\ge 0}$ are the $\AMP$ residuals, then, almost surely,
\begin{align}
\lim_{n\to\infty} \f{1}{n}\, \|z^t\|^2  =\tau_t^2\, .
\end{align}
\end{lemma}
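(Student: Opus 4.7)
My plan is to identify $z^t$ with the quantity $-m^t = w - b^t$ coming from the reduction \eqref{eq:b-as-z}--\eqref{eq:m-as-z} to the general iteration \eqref{eq:mpMain}, and then apply the state evolution result of \cite{BM-MPCS-2010} (Lemma \ref{lem:elephant}) to a single pseudo-Lipschitz test function. Concretely, I would first rewrite
\begin{align*}
\frac{1}{n}\|z^t\|_2^2 = \frac{1}{n}\sum_{i=1}^n (w_i - b^t_i)^2,
\end{align*}
which is an average of $\psi(b^t_i, w_i)$ for $\psi(s,u) = (s-u)^2$, a pseudo-Lipschitz function of order~2. Applied to the joint $(b^t, w)$ iterate, the state-evolution theorem then yields the almost-sure limit $\E[(B_t - W)^2]$, where $W \sim p_W$ and $B_t$ is a centered Gaussian of some variance $\bar\tau_t^2$ independent of $W$.

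It then remains to identify $\bar\tau_t^2 = \tau_t^2 - \sigma^2$. In the general framework of \cite{BM-MPCS-2010} the limiting variance $\bar\tau_t^2$ of $b^t$ equals $\lim_{N\to\infty} \frac{1}{n}\|q^t\|_2^2$. Since $q^t = x^t - x_0$, applying Theorem~\ref{prop:state-evolution} to the pseudo-Lipschitz function $(a,b) \mapsto (a-b)^2$ gives
\begin{align*}
\lim_{N\to\infty}\frac{1}{n}\|q^t\|_2^2 \;=\; \frac{N}{n}\cdot \lim_{N\to\infty}\frac{1}{N}\|q^t\|_2^2 \;=\; \frac{1}{\delta}\, \E\big[\big(\eta(X_0 + \tau_{t-1}Z; \theta_{t-1}) - X_0\big)^2\big],
\end{align*}
which by the defining recursion $\tau_t^2 = \seF(\tau_{t-1}^2, \theta_{t-1})$ equals $\tau_t^2 - \sigma^2$. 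Expanding $\E[(B_t - W)^2] = \bar\tau_t^2 + \sigma^2$ using independence and $\E[W^2] = \sigma^2$ then delivers the claimed limit $\tau_t^2$.

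The main subtlety is extracting from Lemma~\ref{lem:elephant} of \cite{BM-MPCS-2010} the correct joint-convergence statement for $(b^t, w)$: the excerpt has only invoked the $(h^{t+1}, x_0)$ part (called part (b) in the text), whereas what is needed here is the companion statement for $b^t$, including both the Gaussian limit of its coordinates and their asymptotic independence from $w$. Once that statement is cited, the rest is the short computation above. The base case $t = 0$, where $z^0 = y = A x_0 + w$, can be checked directly from the converging-sequence hypotheses on $\frac{1}{N}\|x_0\|_2^2$ and $\frac{1}{n}\|w\|_2^2$ together with the standard estimate $\frac{1}{n}\langle A x_0, w\rangle \to 0$ for Gaussian $A$ independent of $(x_0, w)$.
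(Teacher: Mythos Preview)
Your argument is correct, but it is longer than it needs to be, and the paper's proof takes a more direct path. Instead of writing $z^t = w - b^t$ and applying the $(b^t,w)$ half of Lemma~\ref{lem:elephant}(b), the paper uses the identification $z^t = -m^t$ from \eqref{eq:m-as-z} and then invokes parts~(b) and~(c) of Lemma~\ref{lem:elephant} in one line:
\[
\lim_{n\to\infty}\frac{1}{n}\|z^t\|^2
= \lim_{n\to\infty}\frac{1}{n}\|m^t\|^2
\stackrel{(c)}{=} \lim_{N\to\infty}\frac{1}{N}\|h^{t+1}\|^2
\stackrel{(b)}{=} \E\{(\tau_t Z)^2\} = \tau_t^2.
\]
This avoids your detour through $\E[(B_t-W)^2]$, the identification $\bar\tau_t^2=\tau_t^2-\sigma^2$ via Theorem~\ref{prop:state-evolution} and the recursion for $\seF$, and the separate treatment of $t=0$. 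Two small remarks on your write-up: the ``main subtlety'' you flag is not one, since Lemma~\ref{lem:elephant}(b) already states the companion result for $(b^t,w)$ as Eq.~\eqref{eq:main-lem-z-a}; and your separate base-case computation is unnecessary, since the general statement covers $t=0$ as well (either via $\sigma_0^2=\delta^{-1}\lim\langle q^0,q^0\rangle$ in your route, or directly in the paper's).
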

\begin{proof}
Using representation \eqref{eq:m-as-z} and Lemma \ref{lem:elephant}(b)(c), we get
\begin{align*}
\lim_{n\to\infty} \f{1}{n}\, \|z^t\|^2
\asequal  \lim_{n\to\infty} \f{1}{n}\, \|m^t\|^2\asequal  \lim_{N\to\infty} \f{1}{N}\, \|h^{t+1}\|^2= \tau_t^2\, .
\end{align*}
\end{proof}

Next, we need to generalize state evolution to compute
large system limits for functions of
$x^t$, $x^s$, with $t\neq s$. To this  purpose,
we define the covariances $\{\covz_{s,t}\}_{s,t\ge 0}$ recursively by
\begin{eqnarray}
\covz_{s+1,t+1} = \sigma^2+\frac{1}{\delta}\,\E\Big\{
[\eta(X_0+Z_s;\theta_s)-X_0]\, [\eta(X_0+Z_t;\theta_t)-X_0]\Big\}\, ,
\label{eq:2-times-SE}
\end{eqnarray}
with $(Z_s,Z_t)$ jointly gaussian, independent from $X_0\sim p_{X_0}$
with zero mean and covariance given by $\E\{Z^2_s\} = \covz_{s,s}$,
$\E\{Z^2_t\} = \covz_{t,t}$, $\E\{Z_sZ_t\} = \covz_{s,t}$.
The boundary condition is fixed by letting $\covz_{0,0} = \sigma^2
+\E\{X_0^2\}/\delta$ and
\begin{eqnarray}
\covz_{0,t+1} = \sigma^2+\frac{1}{\delta}\,\E\Big\{
[\eta(X_0+Z_t;\theta_t)-X_0]\, (-X_0)\Big\}\, ,\label{eq:Initial-2times-SE}
\end{eqnarray}
with $Z_{t}\sim\normal(0,\covz_{t,t})$ independent of $X_0$.
This determines by the above recursion $\covz_{t,s}$ for all $t\ge 0$ and for all $s\ge 0$.

With these definition, we have the following generalization of
Theorem \ref{prop:state-evolution}.
\begin{theorem}\label{prop:state-evolution-2times}
Let $\{x_0(N), w(N), A(N)\}_{N\in\naturals}$ be a converging sequence of
instances with the entries of $A(N)$ iid normal with mean $0$ and variance
$1/n$ and let $\psi:\reals^3\to \reals$ be a pseudo-Lipschitz
function. Then, for all $s\ge0$ and $t\ge 0$ almost surely
\begin{eqnarray}\label{eq:state-evolution-2times}
\lim_{N\to\infty}\frac{1}{N}\sum_{i=1}^N\psi
\big(x_{i}^{s}+(A^*z^s)_i,x_{i}^{t}+(A^*z^t)_i,x_{0,i}\big) =
\E\Big\{\psi\big(X_0+Z_s,
X_0+Z_t,X_0\big)\Big\}\, ,
\end{eqnarray}
where $(Z_s,Z_t)$ jointly gaussian, independent from $X_0\sim p_{X_0}$
with zero mean and covariance given by $\E\{Z^2_s\} = \covz_{s,s}$,
$\E\{Z^2_t\} = \covz_{t,t}$, $\E\{Z_sZ_t\} = \covz_{s,t}$.
\end{theorem}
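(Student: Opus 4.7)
The plan is to reduce the two-time statement to a joint-history version of state evolution and then read it off. By \eqref{eq:h-as-z-and-x} one has $x_i^t + (A^*z^t)_i = x_{0,i} - h_i^{t+1}$, so the desired limit is equivalent to showing that the pseudo-Lipschitz empirical average of the triples $(x_{0,i} - h_i^{s+1}, x_{0,i} - h_i^{t+1}, x_{0,i})$ tends to $\E\{\psi(X_0+Z_s, X_0+Z_t, X_0)\}$. Since a centered Gaussian distribution is invariant under $Z\mapsto -Z$, this reduces to showing joint Gaussian convergence of $(h_i^{s+1}, h_i^{t+1})$ independently of $x_{0,i}$, with covariance dictated by \eqref{eq:2-times-SE}.

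To produce this, I would establish by induction on $T$ a full-history generalization of Lemma~\ref{lem:elephant}(b): for every pseudo-Lipschitz $\tilde\psi:\reals^{T+1}\to\reals$,
\[
\lim_{N\to\infty} \frac{1}{N}\sum_{i=1}^N \tilde\psi\big(h_i^1,\ldots,h_i^T, x_{0,i}\big) \asequal \E\big\{\tilde\psi(U_1,\ldots,U_T,X_0)\big\},
\]
where $(U_1,\ldots,U_T)$ is a centered Gaussian vector, independent of $X_0\sim p_{X_0}$, with $\E\{U_{s+1} U_{t+1}\}=\covz_{s,t}$. The conditioning framework of \cite{BM-MPCS-2010} already yields joint convergence rather than just marginals: using \eqref{eq:Conditional_A}, one decomposes
\[
h^{t+1} = A^*m^t - \xi_t q^t = E_t^* m^t - \xi_t q^t + \cP_t(\tA)^* m^t,
\]
and the $\sigal{S}_t$-measurable piece $E_t^* m^t - \xi_t q^t$ is, by \eqref{eq:Et}, a linear combination of $h^1,\ldots,h^t$ up to a remainder that vanishes asymptotically by the Onsager-cancellation computation of \cite{BM-MPCS-2010}. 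The free piece $\cP_t(\tA)^* m^t$ contributes, conditionally on $\sigal{S}_t$, a centered Gaussian direction orthogonal to $Q_t$ with variance controlled by $\|P_{M_t}^\perp m^t\|^2/n$. Iterating this decomposition yields $\lim_{N}\<h^{s+1}, h^{t+1}\>\asequal \lim_{n}\<m^s, m^t\>$.

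The next ingredient is to identify $\lim \<m^s, m^t\>$ with $\covz_{s+1,t+1}$. Since $g_t(s,w)=s-w$ one has $m^t_i = b^t_i - w_i$, and by the analogous joint convergence in the $b$-space (proved simultaneously by the same induction) together with $f_t(h,x_0)=\eta_{t-1}(x_0-h)-x_0$ and \eqref{eq:q-as-x}, the limits of $(b^s, w)$ and $(b^t, w)$ can be expressed through $\eta(X_0+Z_s;\theta_s)-X_0$ and $\eta(X_0+Z_t;\theta_t)-X_0$. A direct calculation then gives
\[
\lim_{n\to\infty} \frac{1}{n}\<m^s, m^t\> \asequal \sigma^2 + \frac{1}{\delta}\,\E\big\{[\eta(X_0+Z_s;\theta_s)-X_0][\eta(X_0+Z_t;\theta_t)-X_0]\big\} = \covz_{s+1,t+1},
\]
with the boundary case \eqref{eq:Initial-2times-SE} handled separately using $q^0=-x_0$.

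Finally, I would conclude by applying the joint statement with $T=\max(s,t)+1$ to the test function $\tilde\psi(u_1,\ldots,u_T,x_0) = \psi(x_0-u_{s+1}, x_0-u_{t+1}, x_0)$, which inherits pseudo-Lipschitz continuity from $\psi$. The main obstacle is the cross-time Onsager cancellation: one must verify that $\xi_t q^t$ annihilates precisely the bias terms in $E_t^* m^t$ coming from \emph{every} prior time index, not just the marginal bias at time $t$. This is essentially carried out in \cite{BM-MPCS-2010}, but the joint formulation requires careful bookkeeping of all cross-time correlations. A secondary subtlety is the discontinuity of $\eta'(\cdot;\theta)$ when computing $\xi_t$, which is handled as in \eqref{eq:LimOmega_t} by smooth approximation combined with the absolute continuity of $X_0+\tau_t Z$ for $\tau_t>0$.
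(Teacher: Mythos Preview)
Your approach is essentially the same as the paper's, but you are doing more work than needed. Lemma~\ref{lem:elephant}(b) as stated already gives the \emph{joint} convergence of $(h_i^1,\ldots,h_i^{t+1},x_{0,i})$ to $(\tau_0 Z_0,\ldots,\tau_t Z_t,X_0)$ with $(Z_0,\ldots,Z_t)$ a zero-mean Gaussian vector independent of $X_0$; you do not need to re-derive this by unrolling the conditioning \eqref{eq:Conditional_A} and redoing the Onsager cancellation. Likewise, the identity $\lim_N\<h^{s+1},h^{t+1}\>\asequal\lim_n\<m^s,m^t\>$ is Lemma~\ref{lem:elephant}(c) and can be cited directly. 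What actually remains, and what the paper's proof consists of, is exactly your ``next ingredient'': set $\tcovz_{s,t}\equiv\lim_N\<h^{s+1},h^{t+1}\>$ and prove by induction on $\max(s,t)$ that $\tcovz_{s,t}=\covz_{s,t}$, using the chain $\<h,h\>\to\<m,m\>=\<b-w,b-w\>\to\sigma^2+\delta^{-1}\<q,q\>$ together with Lemma~\ref{lem:elephant}(b) applied to $(h^{k+1},h^s,x_0)\mapsto[\eta(x_0-h^{k+1};\theta_k)-x_0][\eta(x_0-h^s;\theta_{s-1})-x_0]$, and handling the boundary via $q^0=-x_0$ exactly as you indicate.
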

Notice that the above implies in particular, for any
pseudo-Lipschitz function $\psi:\reals^3\to \reals$,
\begin{eqnarray}
\lim_{N\to\infty}\frac{1}{N}\sum_{i=1}^N\psi
\big(x_{i}^{s+1},x_{i}^{t+1},x_{0,i}\big) =
\E\Big\{\psi\big(\eta(X_0+Z_s;\theta_s),
\eta(X_0+Z_t;\theta_t),X_0\big)\Big\}\, .
\end{eqnarray}
Clearly this result reduces to Theorem \ref{prop:state-evolution}
in the case $s=t$ by noting that $\covz_{t,t}=\tau^2_t$.
The general proof can be found in Appendix \ref{app:state-evolution-2times}.

The following lemma implies that, asymptotically for large $N$,
the $\AMP$ estimates converge.
\begin{lemma} \label{lemma:Convergence}
Under the condition of Theorem \ref{thm:Risk}, the estimates $\{x^t\}_{t\ge 0}$ and residuals
$\{z^t\}_{t\ge 0}$ of $\AMP$ almost surely satisfy
\label{lemma:norm2(x(t)-x(t+1))is_small}
\begin{equation}
\lim_{t\to\infty}\lim_{N\to\infty}\f{1}{N}
\|x^t-x^{t-1}\|^2=0\,,~~~~~\lim_{t\to\infty}\lim_{N\to\infty}\f{1}{N}
\|z^t-z^{t-1}\|^2=0\,.
\end{equation}
\end{lemma}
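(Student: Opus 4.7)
The plan is to apply the two-time state evolution (Theorem \ref{prop:state-evolution-2times}) to express $N^{-1}\|x^{t+1}-x^t\|^2$ in closed form, exploit contractivity of the resulting scalar recursion near its fixed point $\tau_*$ together with the calibration identity (\ref{eq:calibration}) to drive it to zero, and finally deduce the residual claim from the explicit $\AMP$ update.

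Using $x^{s+1}=\eta(A^*z^s+x^s;\theta_s)$ and applying Theorem \ref{prop:state-evolution-2times} with $\psi(a,b,c)=(\eta(a;\theta_t)-\eta(b;\theta_{t-1}))^2$ (pseudo-Lipschitz because $\eta(\cdot\,;\theta)$ is $1$-Lipschitz), I obtain almost surely
\begin{equation*}
\lim_{N\to\infty}\f{1}{N}\|x^{t+1}-x^t\|^2 \;=\; \E\bigl[(\eta(X_0+Z_t;\theta_t)-\eta(X_0+Z_{t-1};\theta_{t-1}))^2\bigr] \;=\; \delta\bigl(\tau_{t+1}^2+\tau_t^2-2\covz_{t+1,t}\bigr),
\end{equation*}
where the last equality uses (\ref{eq:2-times-SE}) and $\covz_{s,s}=\tau_s^2$. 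Since $\tau_t\to\tau_*$ by Proposition \ref{propo:UniqFP}, it suffices to prove that the cross-covariance satisfies $\covz_{t+1,t}\to\tau_*^2$.

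Viewing the recursion as a map $c\mapsto\covz_{t+1,t}(c)$ in the variable $c=\covz_{t,t-1}$, Gaussian integration by parts yields
\begin{equation*}
\partial_c\covz_{t+1,t} \;=\; \f{1}{\delta}\E\bigl[\eta'(X_0+Z_t;\theta_t)\,\eta'(X_0+Z_{t-1};\theta_{t-1})\bigr] \;\le\; \f{1}{\delta}\E\bigl[\eta'(X_0+Z_t;\theta_t)\bigr],
\end{equation*}
the inequality because $\eta'\in\{0,1\}$. As $t\to\infty$ the right-hand side tends to $\delta^{-1}\E[\eta'(X_0+\tau_*Z;\theta_*)]=1-\lambda/(\alpha\tau_*)<1$ by (\ref{eq:calibration}) and $\lambda>0$, so the map is eventually a contraction with rate bounded uniformly in $c$ by some $\kappa<1$. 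Combined with $\covz_{t+1,t}(\tau_*^2)\to\tau_*^2$ (at $c=\tau_*^2$ the joint law of $(Z_t,Z_{t-1})$ concentrates on the diagonal, driving the expectation to $\E[G_*^2]$ where $G_*=\eta(X_0+\tau_*Z;\theta_*)-X_0$), a Gr\"onwall-type step gives $\covz_{t,t-1}\to\tau_*^2$, and hence the first limit. For the residual, rewriting the $\AMP$ update as
\begin{equation*}
z^t-z^{t-1} \;=\; -A(x^t-x^{t-1}) + \omega_{t-1}(z^{t-1}-z^{t-2}) + (\omega_t-\omega_{t-1})z^{t-1},
\end{equation*}
and using $\sigma_{\max}(A)$ almost surely bounded (Marchenko--Pastur), $\|z^{t-1}\|^2/n\to\tau_{t-1}^2$ (Lemma \ref{lemma:NormZ}), $\omega_{t-1}\to\omega_*=1-\lambda/(\alpha\tau_*)<1$ by (\ref{eq:LimOmega_t}) and (\ref{eq:calibration}), $|\omega_t-\omega_{t-1}|\to 0$, and the already-proved $\|x^t-x^{t-1}\|^2/N\to 0$, I obtain the scalar inequality $\sqrt{A_t}\le\kappa\sqrt{A_{t-1}}+\varepsilon_t$ for $A_t:=\lim_N\|z^t-z^{t-1}\|^2/N$ with $\varepsilon_t\to 0$, whence $A_t\to 0$.

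The main difficulty is the convergence $\covz_{t,t-1}\to\tau_*^2$. Cauchy--Schwarz gives the upper bound readily, but the matching lower bound demands a contractivity argument for a non-autonomous two-dimensional recursion whose target is a degenerate fixed point where $(Z_t,Z_{t-1})$ collapses onto a line; the calibration identity $\omega_*<1$ is precisely the input that makes the relevant derivative strictly sub-unit and thereby rescues the argument.
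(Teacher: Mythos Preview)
Your approach matches the paper's: both invoke two-time state evolution (Theorem~\ref{prop:state-evolution-2times}) to reduce the $x$-claim to $\covz_{t,t-1}\to\tau_*^2$, and both drive this via a contraction whose rate at the fixed point is $\delta^{-1}\E[\eta'(X_0+\tau_*Z;\theta_*)]<1$, exactly the calibration input (\ref{eq:calibration}). The paper isolates this step as Lemma~\ref{lemma:Tau_Plus} but changes coordinates to the nonnegative quantity $y_{t,3}=\covz_{t-1,t-1}-2\covz_{t,t-1}+\covz_{t,t}=\E[(Z_{t-1}-Z_t)^2]$, which buys a technical convenience your sketch glosses over: your Gr\"onwall step evaluates the map $c\mapsto G_t(c)$ at $c=\tau_*^2$, but when $\tau_t\uparrow\tau_*$ from below this violates the positive-semidefiniteness constraint $c^2\le\tau_t^2\tau_{t-1}^2$ required for the bivariate Gaussian $(Z_t,Z_{t-1})$ to exist, so $G_t(\tau_*^2)$ is undefined there; the $y_{t,3}$ coordinate lives on $[0,\infty)$ with fixed point $0$ and no such obstruction. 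For the residual, the paper is shorter: using the representations $z^t=w-b^t$, $x^t=x_0+q^t$ and Lemma~\ref{lem:elephant}(c) gives directly $\lim_{n}n^{-1}\|z^{t+1}-z^t\|^2=\delta^{-1}\lim_{N}N^{-1}\|x^{t+1}-x^t\|^2$, so no second contraction argument through the $\AMP$ update is needed.
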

The proof is deferred to Appendix \ref{app:Convergence}.

%
%********************************************
%
\section{Proofs of auxiliary lemmas}

\subsection{Proof of Lemma \ref{lemma:norm2(xt)is_bounded}}
\label{sec:ProofNormBound}

In order to bound the norm of $x^t$, we use state evolution,
Theorem \ref{prop:state-evolution}, for the function $\psi(a,b)=a^2$,
\[
\lim_{t\to\infty}\lim_{N\to\infty}\< x^t,x^t\>\asequal \E\left\{\eta(X_0+\tau_* Z;\theta_*)^2\right\}
\]
for $Z\sim\normal(0,1)$ and independent of $X_0\sim p_{X_0}$. The expectation on the right hand side is bounded and hence
$\lim_{t\to\infty}\lim_{N\to\infty}\< x^t,x^t\>$ is bounded.

For $\hx$, first note that
\begin{align}
\f{1}{N}\cost(\hx)\le\f{1}{N}\cost(0)&=
\f{1}{2N}\|y\|^2\nonumber\\
&=\f{1}{2N}\|Ax_0+w\|^2\nonumber\\
&\le
\frac{\|w\|^2+\smaxA^2\|x_0\|^2}{N}\le \finite_1.
\label{eq:BoundCost}
\end{align}
The last bound holds almost surely as $N\to\infty$,
using standard asymptotic estimate on the singular values of
random matrices (cf. Theorem \ref{prop:marchenko-pastur})
implying that $\smaxA$ has a bounded limit almost surely,
together with the fact that $(x_0,w, A)$ is a converging sequence.

Now, decompose $\hx$ as $\hx = \hx_{\pl}+\hx_{\perp}$
where $\hx_{\pl}\in\ker(A)$ and $\hx_{\perp}\in\ker(A)^\perp$
(the orthogonal complement of $\ker(A)$).
Since, $\hx_{\pl}$ belongs to the random subspace $\ker(A)$ with
dimension $N-n=N(1-\delta)$,
Kashin theorem (cf. Theorem \ref{thm:Kashin})
implies that there exists a positive constant $c_1=c_1(\delta)$ such that
\begin{align*}
\f{1}{N}\|\hx\|^2&= \f{1}{N}\|\hx_{\pl}\|^2+
\f{1}{N}\|\hx_{\perp}\|^2\\
&\le c_1
\left(\frac{\|\hx_{\pl}\|_1}{N}\right)^2+\frac{1}{N}
\, \|\hx_{\perp}\|^2\, .
\end{align*}
Hence, by using triangle inequality and Cauchy-Schwarz, we get
\begin{align*}
\f{1}{N}\|\hx\|^2
&\le 2c_1\left(\frac{\|\hx\|_1}{N}\right)^2
+2c_1\left(\frac{\|\hx_{\perp}\|_1}{N}\right)^2+\f{1}{N}
\|\hx_{\perp}\|^2\\
&\le 2c_1\left(\frac{\|\hx\|_1}{N}\right)^2+\f{2c_1+1}{N}
\|\hx_{\perp}\|^2\, .
\end{align*}
By definition of cost function we have $\|\hx\|_1\le
\lambda^{-1}\cost(\hx)$.
Further, limit theorems for the eigenvalues of
Wishart matrices (cf. Theorem \ref{prop:marchenko-pastur})
imply that there exists a constant $c = c(\delta)$
such that asymptotically almost surely
$\|\hx_{\perp}\|^2\le c\, \|A\hx_{\perp}\|^2$.
Therefore (denoting by $c_i:~i=2,3,4$ bounded constants), we have
\begin{align*}
\f{1}{N}\|\hx\|^2
&\le 2c_1\left(\frac{\|\hx\|_1}{N}\right)^2+\f{c_2}{N}
\|A\hx_{\perp}\|^2\\
&\le 2c_1\left(\frac{\|\hx\|_1}{N}\right)^2+\f{2c_2}{N}
\|y-A\hx_{\perp}\|^2+\f{2c_2}{N}\|y\|^2\\
&\le c_3\left(\frac{\cost(\hx)}{N}\right)^2+2c_2
\f{\cost(\hx)}{N}
+\f{2c_2}{N}\|Ax_0+w\|^2\, .
\end{align*}
The claim follows by using the Eq.~(\ref{eq:BoundCost}) to bound
$\cost(\hx)/N$ and using
$\|Ax_0+w\|^2\le\smaxA^2\|x_0\|^2+\|w\|^2\le 2N\finite_1$
to bound the last term.
\eprooft
%
%*****************************************************************
%
\subsection{Proof of Lemma \ref{lemma:small-subgradient}}
\label{sec:small-subgradient}

First note that equation $x^t=\eta(A^*z^{t-1}+x^{t-1};\theta_{t-1})$ of $\AMP$
implies
\begin{eqnarray}
x_i^t+\theta_{t-1}\,\sign(x_i^t) = [A^*z^{t-1}]_i+x_i^{t-1},&&~~~\textrm{if } x_i^t\neq 0\,,\nonumber\\
&&\label{eq:x(t)=eta_interpreted}\\
\Big|[A^*z^{t-1}]_i+x_i^{t-1}\Big|\leq \theta_{t-1}, &&~~~\textrm{if }x_i^t=0\,. \nonumber
\end{eqnarray}
Therefore, the vector $\subg(\cost,x^t)\equiv\lambda\,s^t-A^*(y-Ax^t)$ where
\begin{equation}
\label{eq:s(xt)_defined}
s_i^t=
\left\{
\begin{array}{lll}
\sign(x_i^t)&&~~~\textrm{if } x_i^t\neq 0\, ,\\
&&\\
\frac{1}{\theta_{t-1}}\Big\{[A^*z^{t-1}]_i+x_i^{t-1}\Big\} &&~~~\textrm{otherwise,}
\end{array}\right.
\end{equation}
is a valid subgradient of $\cost$ at $x^t$.
On the other hand, $y-Ax^t=z^t-\onsager_tz^{t-1}$. We finally get
\begin{align*}
\subg(\cost,x^t)&=\f{1}{\theta_{t-1}}\left[\lambda\theta_{t-1}s^t-\theta_{t-1}A^*(z^t-\onsager_tz^{t-1})\right]\\
&=\f{1}{\theta_{t-1}}\left[\lambda\theta_{t-1}s^t-\theta_{t-1}(1-\onsager_t)A^*z^{t-1}\right]-A^*(z^t-z^{t-1})\\
&=\underbrace{\f{1}{\theta_{t-1}}\left[\lambda\theta_{t-1}s^t-\lambda A^*z^{t-1}\right]}_{(I)}-A^*(z^t-z^{t-1})+\f{[\lambda -\theta_{t-1}(1-\onsager_t) ]}{\theta_{t-1}}\,A^*z^{t-1}\,.
\end{align*}
It is straightforward to see from Eqs. \eqref{eq:x(t)=eta_interpreted} and \eqref{eq:s(xt)_defined} that $(I)=\lambda(x^{t-1}-x^t)$.
Hence,
\begin{align*}
\frac{1}{\sqrt{N}}\|\subg(\cost,x^t)\|&\le \frac{\la}{\theta_{t-1}\sqrt{N}}\|x^t-x^{t-1}\| + \frac{\smaxA}{\sqrt{N}}\|z^t-z^{t-1}\|+\f{|\lambda -\theta_{t-1}(1-\onsager_t)|}{\theta_{t-1}}\, \frac{1}{\sqrt{N}}\|z^{t-1}\|\,.
\end{align*}
By Lemma \ref{lemma:norm2(x(t)-x(t+1))is_small},
and the fact that $\smaxA$ is almost surely bounded as $N\to \infty$
(cf. Theorem \ref{prop:marchenko-pastur}),
we deduce that the two terms
$\la\|x^t-x^{t-1}\|/(\theta_{t-1}\sqrt{N})$
and $\smaxA\|z^t-z^{t-1}\|^2/\sqrt{N}$ converge
to $0$ when $N\to\infty$ and then $t\to\infty$. For the third term, using state evolution
(see Lemma \ref{lemma:NormZ}), we obtain
$\lim_{N\to\infty}\|z^{t-1}\|^2/N<\infty$.  Finally,
using the calibration relation Eq.~\eqref{eq:calibration}, we get
\begin{align*}
\lim_{t\to\infty}\lim_{N\to\infty}\left|\f{\lambda -\theta_{t-1}(1-\onsager_t)}{\theta_{t-1}}\right|&\asequal\f{1}{\theta_*}\left|\lambda -\theta_*(1-\frac{1}{\delta}
\E\left\{\eta'(X_0+\tau_* Z;\theta_*)\right\})\right|= 0\, ,
\end{align*}
which finishes the proof.
\eprooft
%
%**********************************
%
\subsection{Proof of Lemma \ref{lemma:MinS}}
\label{sec:ProofMinS}

The proof uses the representation (\ref{eq:Conditional_A}),
together with the expression (\ref{eq:Et}) for the conditional expectation.
Apart from the matrices $Y_t$, $Q_t$, $X_t$, $M_t$ introduced there,
we will also use
\begin{align*}
B_t \equiv \Big[b^0\Big|b^1\Big|\cdots\Big|b^{t-1}\Big]\,,
\;\;\;\;\;\;\;\;
H_t \equiv \Big[h^1\Big|h^2\Big|\cdots\Big|h^{t}\Big]\, .
\end{align*}
In this section, since $t$ is fixed, we will drop everywhere the
subscript $t$ from such matrices.

We state below a somewhat more convenient description.
\begin{lemma}\label{lemma:AvApprox}
For any $v\in\reals^N$, we have
\begin{eqnarray}
Av|_{\sigal{S}} \stackrel{d}{=} Y(Q^*Q)^{-1}Q^*P_Qv + M(M^*M)^{-1}X^*P_Q^{\perp}v +
P_M^{\perp}\tA P_Q^{\perp}v \, .
\end{eqnarray}
\end{lemma}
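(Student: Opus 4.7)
The plan is to derive the stated identity by direct algebraic manipulation of the conditional distribution formula (\ref{eq:Conditional_A}) together with (\ref{eq:Et}), using the orthogonal decomposition $v = P_Q v + P_Q^{\perp} v$ to reveal a cancellation between the second and third terms of $E_t$.

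First I would apply (\ref{eq:Conditional_A}) to get $Av|_{\sigal{S}} = E v + P_M^{\perp}\tA P_Q^{\perp} v$, where by (\ref{eq:Et}),
\begin{equation*}
E v = Y(Q^*Q)^{-1}Q^* v + M(M^*M)^{-1}X^* v - M(M^*M)^{-1}M^*Y(Q^*Q)^{-1}Q^* v.
\end{equation*}
Since the $P_M^{\perp}\tA P_Q^{\perp} v$ term already matches the last term of the claim, everything reduces to showing that $Ev$ equals $Y(Q^*Q)^{-1}Q^*P_Q v + M(M^*M)^{-1}X^* P_Q^{\perp} v$.

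The key observation is that $P_Q^{\perp}$ projects onto $\ker(Q^*)$, so $Q^* P_Q^{\perp} = 0$ and hence $Q^* v = Q^* P_Q v$. This immediately rewrites the first term of $E v$ as $Y(Q^*Q)^{-1}Q^* P_Q v$. For the second term, write $X^* v = X^* P_Q v + X^* P_Q^{\perp} v$; the $P_Q^{\perp}$ piece is exactly what appears in the claim, so it suffices to show that $M(M^*M)^{-1}X^* P_Q v$ cancels the third term $M(M^*M)^{-1}M^*Y(Q^*Q)^{-1}Q^* v$. Using the defining relations $Y = AQ$ and $X = A^*M$ (i.e.\ $X^* = M^*A$), together with $P_Q = Q(Q^*Q)^{-1}Q^*$, we get
\begin{equation*}
X^* P_Q v = M^* A\, Q(Q^*Q)^{-1}Q^* v = M^* Y (Q^*Q)^{-1} Q^* v,
\end{equation*}
so that $M(M^*M)^{-1}X^* P_Q v = M(M^*M)^{-1}M^* Y(Q^*Q)^{-1}Q^* v$, which is precisely the third term of $Ev$. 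Adding the pieces yields the formula.

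There is no real obstacle here; the lemma is essentially a bookkeeping rewrite of (\ref{eq:Conditional_A})–(\ref{eq:Et}). The only thing worth noting is that this reformulation is useful downstream because it isolates the dependence of $Av$ on $v$ through the two components $P_Q v$ (which feeds only into the $Y$-term, living in the column span of $Y$) and $P_Q^{\perp} v$ (which feeds into the $M$-term and the fresh Gaussian piece), a separation that will be convenient in controlling singular values of submatrices $A_S$ in Lemma \ref{lemma:MinS}.
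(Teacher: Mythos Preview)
Your proof is correct and follows essentially the same route as the paper: decompose $v$ along the column space of $Q$ and its orthogonal complement, and use the identity $X^*Q = M^*AQ = M^*Y$ (equivalently $X^*P_Q = M^*Y(Q^*Q)^{-1}Q^*$) to cancel the second and third terms of $E$ on the $P_Qv$ component, while $Q^*P_Q^{\perp}=0$ kills the first and third terms on the $P_Q^{\perp}v$ component.
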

\begin{proof}
It is clearly sufficient to prove that, for
$v = v_{\parallel}+v_{\perp}$, $P_Qv_{\parallel}=v_{\parallel}$,
$P_Q^{\perp}v_{\perp}=v_{\perp}$, we have
\begin{eqnarray}
Ev_{\parallel} =  Y(Q^*Q)^{-1}Q^*v_{\parallel}\, ,\;\;\;\;\;
Ev_{\perp} = M(M^*M)^{-1}X^*v_{\perp}\, .
\end{eqnarray}
The first identity is an easy consequence of the fact that
$X^*Q= M^*AQ = M^*Y$, while the second one follows immediately from $Q^*v_{\perp}=0$.
\end{proof}

The following fact (see Appendix  \ref{sec:Svalues_t} for a proof)
will be used several times.
\begin{lemma}\label{lemma:Svalues_t}
For any $t$ there exists $c>0$ such that, for $R\in \{ Q^*Q;\, M^*M;\, X^*X;\, Y^*Y\}$, eventually almost surely as $N\to\infty$,
\begin{eqnarray}
c\le \lambda_{\rm min}(R/N) \le \lambda_{\rm max}(R/N) \le 1/c\, .
\end{eqnarray}
\end{lemma}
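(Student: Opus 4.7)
The plan is to exploit that $t$ is held fixed as $N\to\infty$: I will show that for each $R\in\{Q^*Q,M^*M,X^*X,Y^*Y\}$ the rescaled matrix $R/N$ converges almost surely to a \emph{deterministic} positive-definite $t\times t$ matrix $R^{\infty}$. Entrywise convergence of $R/N$ to a positive-definite limit then forces $\lambda_{\min}(R/N)$ and $\lambda_{\max}(R/N)$ to be bounded between two positive constants almost surely for all $N$ large enough, which is exactly the desired inequality.

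To compute the limits of $Q^*Q/N$ and $M^*M/N$, I would apply the two-time state evolution of Theorem~\ref{prop:state-evolution-2times} (together with its extension to $\langle z^s,z^{s'}\rangle/n$, obtained exactly as in Lemma~\ref{lemma:NormZ}) to the pseudo-Lipschitz test functions $\psi(a,b,c)=(a-c)(b-c)$ and $\psi(a,b,c)=ab$ respectively. For $0\le s,s'\le t-1$ this yields
\[
\tfrac{1}{N}(Q^*Q)_{s+1,s'+1}\to \delta\bigl(\covz_{s+1,s'+1}-\sigma^2\bigr),\qquad \tfrac{1}{N}(M^*M)_{s+1,s'+1}\to \delta\,\covz_{s+1,s'+1}.
\]
For $X^*X/N$ and $Y^*Y/N$ I would expand each column via $X=[h^s+\xi_{s-1}q^{s-1}]$ and $Y=[b^{s-1}+\lambda_{s-1}m^{s-2}]$, noting that $\xi_s\equiv 1$ (since $g_t'\equiv 1$) and that $\lambda_s=-\omega_s$ has the deterministic limit recorded in Eq.~\eqref{eq:LimOmega_t}; the entries then follow from a multi-time extension of Theorem~\ref{prop:state-evolution-2times} applied to the joint empirical laws of $(h^1,\dots,h^t,x_0)$ and $(b^0,\dots,b^{t-1},w)$.

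The main obstacle will be verifying positive-definiteness of the four limit matrices $R^{\infty}$. For $M^*M$ this reduces to strict positive-definiteness of the $t\times t$ submatrix of $\covz$, which I would establish by induction on $t$: the Schur complement at step $t$ equals the conditional variance of $Z_{t-1}$ given $Z_0,\dots,Z_{t-2}$ under the joint Gaussian law \eqref{eq:2-times-SE}, and this is bounded below by $\sigma^2>0$ because the recursion adds a full $\sigma^2$ to the diagonal while contributing only a positive-semidefinite rescaled-Gram piece off-diagonal. For $Q^*Q$ the limit rewrites as $\sigma^2\mathbf{1}\mathbf{1}^{\!\top}+\delta^{-1}G_V$ with $G_V$ the Gram matrix of $V_s=\eta(X_0+Z_s;\theta_s)-X_0$, so $c^{\!\top}R^{\infty}c=0$ forces simultaneously $\sum_s c_s=0$ and $\sum_s c_sV_s\equiv 0$ a.s.; the latter is ruled out by combining the strict monotone evolution of $\tau_s$ away from $\tau_*$ (Proposition~\ref{propo:UniqFP}) with the absolute continuity of $X_0+\tau_s Z$, which together force the $V_s$'s to be $L^2$-independent. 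The cases $X^*X$ and $Y^*Y$ reduce to the same two analyses by reading off their columns as explicit affine combinations of $h^s,q^{s-1}$ and $b^{s-1},m^{s-2}$, whose limit Gram matrices inherit positive-definiteness from $\covz$.
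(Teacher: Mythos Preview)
Your plan to pass to a deterministic limit of each $R/N$ and then argue positive-definiteness of the limit is natural, and the upper-bound half goes through exactly as you say. The gap is in the lower-bound half: none of your positive-definiteness arguments actually closes.

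For $M^*M$ you claim the Schur complement of $\covz_{[0,t-1]}$ is bounded below by $\sigma^2$ ``because the recursion adds a full $\sigma^2$ to the diagonal while contributing only a positive-semidefinite rescaled-Gram piece off-diagonal.'' But the recursion \eqref{eq:2-times-SE} adds $\sigma^2$ to \emph{every} entry of $\covz$, not just the diagonal: one has $\covz=\sigma^2\mathbf{1}\mathbf{1}^{\!*}+\delta^{-1}G_W$ with $G_W$ a Gram matrix. The rank-one piece $\sigma^2\mathbf{1}\mathbf{1}^{\!*}$ contributes nothing to the smallest eigenvalue, so the Schur complement need not be $\ge\sigma^2$; positive-definiteness of $\covz$ is exactly the $L^2$-independence of the family $\{W_s\}$, which is what you still have to prove. (Indeed, the paper derives Lemma~\ref{lemma:PosDef} \emph{from} the present lemma, not the other way around.) Your $Q^*Q$ argument has the same issue: you need genuine $L^2$-independence of the $V_s$, and ``strict monotone evolution of $\tau_s$ plus absolute continuity'' does not give it --- the $V_s$ are nonlinear functions of the jointly Gaussian $Z_s$, whose joint law is itself governed by the very matrix $\covz$ whose nondegeneracy you are trying to establish, so the argument is circular. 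Finally, for $X^*X$ and $Y^*Y$ the columns mix $h$'s with $q$'s (resp.\ $b$'s with $m$'s), so their limit Gram matrices are not affine images of $\covz$ alone; ``inheriting'' positive-definiteness from $\covz$ is not enough without further work.

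The paper avoids all of this by working at finite $N$. For $Q^*Q$ and $M^*M$ it simply invokes Lemma~\ref{lem:elephant}(g), which gives strictly positive lower bounds on $\langle q^r_\perp,q^r_\perp\rangle$ and $\langle m^s_\perp,m^s_\perp\rangle$; these are exactly the successive Schur complements of $Q^*Q/N$ and $M^*M/N$. For $Y^*Y$ (and analogously $X^*X$) it runs an induction on $t$ using the conditioning $A|_{\sigal{S}_{t-1}}=E_{t-1}+P_{M_{t-1}}^\perp\tA P_{Q_{t-1}}^\perp$: writing $Y_t\vec a_t|_{\sigal{S}_{t-1}}$ splits off a fresh Gaussian piece $a_tP_{M_{t-1}}^\perp\tA q^{t-1}_\perp$ whose squared norm is asymptotically $a_t^2\langle q^{t-1}_\perp,q^{t-1}_\perp\rangle\ge a_t^2\varsigma_t>0$, and this supplies the positive Schur complement needed at each step. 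In short, the nondegeneracy is extracted from the randomness of $A$ that is ``fresh'' at step $t$, not from algebraic properties of the limiting covariance.
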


Given the above remarks, we will immediately see that Lemma \ref{lemma:MinS}
is implied by the following statement.
\begin{lemma}\label{lemma:ConcreteMinS}
Let $S\subseteq [N]$ be given such that $|S|\le N(\delta-\gamma)$,
for some $\gamma>0$. Then there exists $\alpha_1=\alpha_1(\gamma)>0$
(independent of $t$) and
$\alpha_2=\alpha_2(\gamma,t)>0$ (depending on $t$ and $\gamma$)
such that
\begin{eqnarray*}
\prob\Big\{\min_{\|v\|=1,\,\supp(v)\subseteq S}\big\| Ev +
P_M^{\perp}\tA P_Q^{\perp}v\big\|\le \alpha_2\,\Big|\,
\sigal{S}_t\Big\}\le \, e^{-N\alpha_1}\, ,
\end{eqnarray*}
eventually almost surely as $N\to\infty$.
(With $Ev = Y(Q^*Q)^{-1}Q^*P_Qv + M(M^*M)^{-1}X^*P_Q^{\perp}v$.)
\end{lemma}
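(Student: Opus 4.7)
The plan is to split the unit sphere of vectors supported on $S$ according to whether $\|P_Q^\perp v\|_2$ is bounded below or close to zero, and to treat the two regions separately. On the first region the random term $P_M^\perp \tilde A P_Q^\perp v$ is a nondegenerate Gaussian vector in the $(n-t)$-dimensional subspace $\mathrm{col}(M)^\perp$, to which Gaussian concentration combined with an $\epsilon$-net argument can be applied. On the second region $v$ is nearly in $\mathrm{col}(Q)$ and the deterministic piece $Ev$ is forced to be large by the operator-norm estimates of Lemma \ref{lemma:Svalues_t}.

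More concretely, I would fix a constant $c_0 = c_0(\gamma,t) > 0$ (to be chosen at the end) and decompose $\Sigma_S \equiv \{v \in \reals^N : \|v\|_2 = 1,\ \supp(v) \subseteq S\}$ as $\Sigma_S^{(1)} \cup \Sigma_S^{(2)}$ with $\Sigma_S^{(1)} \equiv \{v \in \Sigma_S : \|P_Q^\perp v\|_2 \ge c_0\}$. For $v \in \Sigma_S^{(1)}$, conditional on $\sigal{S}_t$ the vector $P_M^\perp \tilde A P_Q^\perp v$ is centered Gaussian supported on $\mathrm{col}(M)^\perp$ with covariance $(\|P_Q^\perp v\|_2^2/n)\,P_M^\perp$. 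Using the deterministic inequality $\|\mu + G\|_2 \ge \|P_\mu^\perp G\|_2$ together with standard $\chi$-tail estimates, for each fixed $v \in \Sigma_S^{(1)}$ one obtains
\[
\prob\bigl(\|Ev + P_M^\perp \tilde A P_Q^\perp v\|_2 \le \tfrac{c_0}{2}\sqrt{(n-t-1)/n}\,\big|\,\sigal{S}_t\bigr) \le e^{-c_1(n-t-1)}
\]
for a numerical constant $c_1 > 0$. Covering $\Sigma_S^{(1)}$ by an $\epsilon$-net of cardinality at most $(3/\epsilon)^{|S|}$ and extending to the full sphere via the Lipschitz bound $\|\tilde A\| \le C$ (which holds almost surely by Theorem \ref{prop:marchenko-pastur}), a union bound yields a uniform lower bound on Case A. The quantitative heart of the matter is that $|S| \le N(\delta-\gamma)$ and $n-t \ge N\delta - t$, so the margin $n-t-|S| \ge N\gamma/2$ for $N$ large; this allows one to pick $\epsilon$ independent of $N$ with $|S|\log(3/\epsilon) < c_1(n-t)/2$, producing an $\alpha_1(\gamma)>0$ for which Case A fails with conditional probability at most $e^{-N\alpha_1}$.

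For $v \in \Sigma_S^{(2)}$ the argument is essentially deterministic (modulo the almost sure bound on $\|\tilde A\|$). Writing $v = Qw + r$ with $r = P_Q^\perp v$ and $\|r\|_2 < c_0$, a direct computation gives $Ev = Yw + M(M^*M)^{-1}X^*r$. Lemma \ref{lemma:Svalues_t} yields constants $c_2, c_3, c_4 > 0$ (depending on $t$ but not on $N$) such that $\|Yw\|_2 \ge c_2 \sqrt{N}\,\|w\|_2$, $\|Qw\|_2 \le c_3 \sqrt{N}\,\|w\|_2$ and $\|M(M^*M)^{-1}X^*r\|_2 \le c_4 c_0$. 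Combining the first two bounds with $\|Qw\|_2 = \|P_Q v\|_2 \ge \sqrt{1-c_0^2}$ gives $\|Yw\|_2 \ge (c_2/c_3)\sqrt{1-c_0^2}$, and since $\|P_M^\perp \tilde A P_Q^\perp v\|_2 \le \|\tilde A\|\,c_0 \le C c_0$, choosing $c_0$ small enough relative to $c_2/c_3$, $c_4$ and $C$ produces a uniform lower bound on $\|Ev + P_M^\perp \tilde A P_Q^\perp v\|_2$ over $\Sigma_S^{(2)}$. Setting $\alpha_2$ equal to the smaller of the two bounds completes the argument.

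The main obstacle is the interplay between the $\epsilon$-net and the Gaussian small-ball estimate in Case A: the covering number is exponential in $|S|$, while the tail probability is only exponential in $n-t$. The hypothesis $|S| \le N(\delta-\gamma)$ with $\gamma > 0$ is exactly what makes these exponents compatible, providing the positive margin needed to carry out the union bound. Everything else---the representation of $A$ conditional on $\sigal{S}_t$ from Eq.~(\ref{eq:Conditional_A}), the operator-norm bounds from Lemma \ref{lemma:Svalues_t}, and the standard bound on $\|\tilde A\|$---feeds into the argument fairly mechanically.
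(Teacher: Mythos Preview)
Your overall decomposition---splitting on the size of $\|P_Q^\perp v\|$ and treating the two regions separately---is the same as the paper's, and your Case~B (the deterministic bound when $\|P_Q^\perp v\|$ is small) is essentially Lemma~\ref{lemma:EvBound}, arguably packaged more directly. The paper's Case~A, however, proceeds quite differently. Instead of an $\epsilon$-net on the $|S|$-dimensional sphere $\Sigma_S$, it proves a \emph{uniform} Pythagorean-type inequality (Lemma~\ref{lemma:Pitagora}): with conditional probability $\ge 1-e^{-Na}$, one has $\|Ev + P_M^\perp \tilde A P_Q^\perp v\|^2 \ge (\gamma/4\delta)\|\tilde A P_Q^\perp v\|^2$ simultaneously for all $v\in\Sigma_S$. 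The point is that $\tilde A P_Q^\perp v$ ranges over the random subspace $\mathrm{im}(\tilde A P_Q^\perp P_S)$, while $Ev - P_M\tilde A P_Q^\perp v$ lies in the \emph{fixed} $2t$-dimensional span of the columns of $M$ and $B$; the angle between a uniformly random subspace of dimension $\le N(\delta-\gamma)$ in $\reals^n$ and any fixed $2t$-dimensional subspace is controlled by concentration on the sphere (Proposition~\ref{propo:Concentration}), and the net required there lives on $S^{2t}$, hence has bounded cardinality. The infimum of $\|\tilde A P_Q^\perp v\|$ is then handled separately by the smallest singular value of $\tilde A$ restricted to $\mathrm{im}(P_Q^\perp P_S)$ (Lemma~\ref{lemma:Event2}), invoking known large-deviation bounds for rectangular Gaussian matrices. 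So the paper trades your $|S|$-dimensional net for a $2t$-dimensional one plus an off-the-shelf singular-value estimate.

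Your route can be made to work, but there is a quantitative gap as written. The pointwise bound with threshold $\tfrac{c_0}{2}\sqrt{(n-t-1)/n}$ corresponds to $\prob(\chi^2_k\le k/4)\le e^{-c_1 k}$ with $c_1\approx 0.32$, and the subsequent requirement $|S|\log(3/\epsilon) < c_1(n-t)/2$ forces $\log(3/\epsilon) \lesssim c_1\delta/(2(\delta-\gamma))$, which has no solution $\epsilon<1$ once $\gamma$ is small. The positive margin $n-t-|S|\ge N\gamma/2$ is not by itself what closes the union bound; what is needed is that the small-ball rate can be pushed arbitrarily high. Replace the threshold by $\tau c_0$ and use $\prob(\chi^2_k\le \tau^2 k)\le (\tau^2 e)^{k/2}$, so that the exponent $c_1(\tau)\sim\log(1/\tau)$ can be made large enough (depending on $\gamma$, at the cost of a smaller $\alpha_2$) to beat $(3/\epsilon)^{|S|}$ with $\epsilon$ comparable to $\tau c_0/L$. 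With that adjustment your $\epsilon$-net argument goes through and yields a valid alternative proof.
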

In the next section we will show that this lemma implies Lemma
\ref{lemma:MinS}. We will then prove the lemma just stated.
%
%**************************
%
\subsubsection{Lemma \ref{lemma:ConcreteMinS} implies Lemma \ref{lemma:MinS}}

By Borel-Cantelli, it is sufficient to show
 that, for $S$ measurable on $\sigal{S}_t$ and
$|S|\le N(\delta-c)$ there exist $a_1=a_1(c)>0$ and $a_2=a_2(c,t)>0$, such that
\begin{eqnarray*}
\prob\Big\{\min_{|S'|\le a_1N}\;
\min_{\|v\|=1,\supp(v)\subseteq S\cup S'}\|Av\|< a_2\Big\} \le \frac{1}{N^2}\,,
\end{eqnarray*}
for all $N$ large enough.
Conditioning on $\sigal{S}_t$ and using the union bound, this
probability can be estimated as
\begin{align*}
\E\Big\{\prob\Big\{\min_{|S'|\le a_1N}\;&
\min_{\|v\|=1,\supp(v)\subseteq S\cup S'}\|Av\|< a_2\Big|\,\sigal{S}_t\Big\}
\Big\}\le\\
&\le e^{Nh(a_1)}\E\Big\{\max_{|S'|\le a_1N}\prob\Big\{
\min_{\|v\|=1,\supp(v)\subseteq S\cup S'}\|Av\|< a_2\Big|\,\sigal{S}_t\Big\}
\Big\}\,,
\end{align*}
where $h(p)=-p\log p-(1-p)\log(1-p)$ is the binary entropy function.
The union bound calculation indeed proceeds as follows
\begin{align*}
\prob\{\min_{|S'|\le Na_1}\sX_{S'}<a_2\big|{\sigal{S}_t}\}&\leq \sum_{|S'|\le Na_1}\prob\{\sX_{S'}<a_2\big|{\sigal{S}_t}\}\\
&\leq \Big[\sum_{k=1}^{Na_1}{N\choose k}\Big]\max_{|S'|\le Na_1}\prob\{\sX_{S'}<a_2\big|{\sigal{S}_t}\} \\
&\le e^{Nh(a_1)}\max_{|S'|\le Na_1}\prob\{\sX_{S'}<a_2\big|{\sigal{S}_t}\}\,,
\end{align*}
where $\sX_{S'}=\min_{\|v\|=1,\supp(v)\subseteq S\cup S'}\|Av\|$.  Now, fix $a_1<c/2$ in such a way that $h(a_1)\le \alpha_1(c/2)/2$
(with $\alpha_1$ defined as per Lemma \ref{lemma:ConcreteMinS}).
Further choose $a_2=\alpha_2(c/2,t)/2$.
The above probability is then upper bounded by
\begin{align*}
e^{N\alpha_1(c/2)/2}\;\E\Big\{\max_{|S''|\le N(\delta-c/2)}\prob\Big\{
\min_{\|v\|=1,\supp(v)\subseteq S''}\|Av\|<\frac{1}{2}
\alpha_2(c/2,t)\Big|\,\sigal{S}_t\Big\}
\Big\} \, .
\end{align*}
Finally, applying Lemma \ref{lemma:ConcreteMinS} and
using Lemma \ref{lemma:AvApprox} to estimate $Av$, we get,
for all $N$ large enough,
\begin{align*}
e^{N\alpha_1/2}\;\E\big\{\max_{|S''|\le N(\delta-c/2)} e^{-N\alpha_1}
\big\}\le \frac{1}{N^2} \, .
\end{align*}
This finishes the proof.
\eprooft
%
%**************************
%
\subsubsection{Proof of Lemma \ref{lemma:ConcreteMinS}}

We begin with the following Pythagorean inequality.
\begin{lemma}\label{lemma:Pitagora}
Let $S\subseteq [N]$ be given such that $|S|\le N(\delta-\gamma)$,
for some $\gamma>0$. Recall that
$Ev =Y(Q^*Q)^{-1}Q^*P_Qv + M(M^*M)^{-1}X^*P_Q^{\perp}v$
and consider the event
\begin{eqnarray*}
\cE_1 \equiv
\Big\{ \big\| Ev +
P_M^{\perp}\tA P_Q^{\perp}v\big\|^2\ge  \frac{\gamma}{4\delta}\big\| Ev
-P_M\tA P_Q^{\perp}v\big\|^2+
\frac{\gamma}{4\delta}\big\|\tA P_Q^{\perp}v\big\|^2\, \;\forall v\;
\mbox{ s.t. }\; \|v\|=1\mbox{ and }\;\supp(v)\subseteq S\Big\}.
\end{eqnarray*}
Then there exists $a=a(\gamma)>0$ such that $\prob\{\cE_1|\sigal{S}_t\}\ge 1-e^{-Na}$.
\end{lemma}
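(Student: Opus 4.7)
The plan is to condition on $\sigal{S}_t$, so that $\tA$ is an independent $n\times N$ matrix with iid $\normal(0,1/n)$ entries while $E,Q,M,X,Y$ are deterministic. Setting $u := P_Q^\perp v$, $w := \tA u$, and $\zeta := Ev + P_M^\perp w$ (which is $Av$ in conditional distribution by Lemma~\ref{lemma:AvApprox}), the key identity $Ev - P_M w = \zeta - w$ lets me rewrite the target inequality as
$$
(1-\kappa)\|\zeta\|^2 + 2\kappa\<\zeta,w\> - 2\kappa\|w\|^2 \ge 0, \qquad \kappa := \tfrac{\gamma}{4\delta}.
$$
Using $\<\zeta,w\> = \<Ev,w\> + \|P_M^\perp w\|^2$ and $\|w\|^2 = \|P_M w\|^2 + \|P_M^\perp w\|^2$, this collapses to
$$
(1-\kappa)\|\zeta\|^2 + 2\kappa\<Ev,w\> - 2\kappa\|P_M w\|^2 \ge 0,
$$
and the coefficient $1-\kappa \ge 3/4$ (since $\gamma<\delta$) provides the leading mass that must dominate the two subtracted random terms.

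Next I would control each term by gaussian concentration, pointwise in $v$. Since $\mathrm{rank}(M) \le t$ stays fixed as $N\to\infty$ and $\tA$ is conditionally independent of $M$, the conditional distributions give $\|P_M w\|^2 \deq (\|u\|^2/n)\chi^2_{\mathrm{rank}(M)}$, $\<Ev,w\> \deq \normal(0,\|Ev\|^2\|u\|^2/n)$, and $\|P_M^\perp w\|^2$ concentrates near $(1-\mathrm{rank}(M)/n)\|u\|^2$. The cross term appearing inside $\|\zeta\|^2$, namely $\<Ev,P_M^\perp w\> \deq \normal(0,\|P_M^\perp Ev\|^2\|u\|^2/n)$, is similarly small. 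Combining these, for any fixed unit $v$ I obtain $\|\zeta\|^2 \ge \|Ev\|^2 + (1-o(1))\|u\|^2 - O(1/\sqrt n)$, while $2\kappa(|\<Ev,w\>| + \|P_M w\|^2) = O(1/\sqrt n)$; Lemma~\ref{lemma:Svalues_t} supplies the $O(1)$ bounds on $\|Ev\|$ and $\|u\|$ used implicitly. Each gaussian/$\chi^2$ bound fails with probability $e^{-cn}$ for an appropriate constant $c>0$.

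To extend this pointwise-in-$v$ statement to the uniform one claimed, I plan to apply an $\eps$-net argument on the unit sphere of $\{v\in\reals^N : \supp(v)\subseteq S\}$, a subspace of dimension $|S|\le N(\delta-\gamma)$. This sphere admits an $\eps$-net of cardinality at most $(3/\eps)^{|S|} = e^{|S|\log(3/\eps)}$. Each concentration bound fails with probability at most $e^{-cn}$ for a constant $c>0$ independent of $N$, so the union bound gives overall failure probability
$$
\exp\!\bigl(N\bigl[(\delta-\gamma)\log(3/\eps) - c\delta\bigr]\bigr).
$$
The slack $\gamma N$ between $|S|$ and $n = \delta N$ is exactly what lets me choose fixed constants $\eps$ and the deviation scale so that this exponent is $-Na$ with $a = a(\gamma) > 0$. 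Passing from the $\eps$-net to all unit $v$ uses the Lipschitz dependence of the quadratic form on $v$, with Lipschitz constants $O(1)$ from Lemma~\ref{lemma:Svalues_t} together with a Marchenko-Pastur operator-norm bound for $\tA$.

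The main obstacle I expect is carefully calibrating these constants so that $a(\gamma)>0$ emerges and is uniform in $t$. The dimension-counting inequality $|S|\le N(\delta-\gamma)$ versus $n = \delta N$ is essential: the slack $\gamma N$ of random degrees of freedom over net dimension is exactly what keeps the union-bound exponent negative; without it, the net cardinality would overwhelm the gaussian tails and no exponential decay could survive. A subtle point is handling the regime $\|u\|\to 0$ separately, since then $w$ vanishes and the inequality reduces to the trivial $(1-\kappa)\|Ev\|^2 \ge 0$; splitting off this case avoids any degeneracy when $v$ lies close to $\mathrm{col}(Q)$.
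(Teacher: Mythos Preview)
Your approach is different from the paper's, and while the algebraic reduction is correct, the net-over-$v$ strategy faces a real obstacle that the paper sidesteps.

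The paper does not net over $v$ at all. Instead it proves the single inequality
\[
\big|(Ev-P_M\tA P_Q^{\perp}v \, ,\, \tA P_Q^{\perp}v)\big| \le
\sqrt{1-\tfrac{\gamma}{2\delta}}\, \|Ev-P_M\tA P_Q^{\perp}v \|\,\|\tA P_Q^{\perp}v\|
\]
uniformly in $v$, and then the target inequality follows by the elementary bound $\|a+b\|^2\ge(1-\rho)(\|a\|^2+\|b\|^2)$ whenever $|(a,b)|\le\rho\|a\|\|b\|$. The uniform inner-product bound is obtained geometrically: as $v$ ranges over unit vectors supported on $S$, the vector $\tA P_Q^{\perp}v$ always lies in the random subspace $\im(\tA P_Q^{\perp}P_S)\subseteq\reals^n$, which is uniformly random of dimension at most $N(\delta-\gamma)$; meanwhile $Ev-P_M\tA P_Q^{\perp}v$ always lies in the fixed subspace $\mathrm{col}(M)+\mathrm{col}(B)$ of dimension at most $2t$. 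A single concentration-of-measure estimate on the sphere (Proposition~\ref{propo:Concentration}) then bounds the maximal cosine between these two subspaces by $\sqrt{1-\gamma/\delta}+\ve$, with failure probability $e^{-na}$ where $a$ depends only on $\ve$ (hence only on $\gamma$), because the net in that proposition lives in the $2t$-dimensional space and has polynomial, not exponential, cardinality.

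Your approach, by contrast, nets over the $|S|$-dimensional sphere, so the union bound carries a factor $e^{N(\delta-\gamma)\log(3/\eps)}$ that must be beaten by the pointwise concentration rate $e^{-cn}$. The constant $c$ you obtain depends on the variance of $\langle Ev,w\rangle$, hence on a uniform upper bound for $\|Ev\|$; the only such bound available is from Lemma~\ref{lemma:Svalues_t}, and that bound is $t$-dependent. Likewise, the Lipschitz step from net to sphere requires a uniform lower bound on the margin $(1-\kappa)(\|Ev\|^2+\|u\|^2)$, which again routes through $t$-dependent constants (Lemma~\ref{lemma:EvBound}). The lemma, however, asserts $a=a(\gamma)$ \emph{independent of $t$}, and this independence is essential downstream: in Proposition~\ref{propo:PSD} the exponent $a_1$ determines $t_{\min}$, so a $t$-dependent $a$ would create a circularity. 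Your sketch does not address how to extract a $t$-free exponent, and it is not clear this can be done along the net-over-$v$ route without essentially rediscovering the subspace-angle argument.
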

\begin{proof}
We claim that the following inequality holds for all $v\in\reals^N$, that satisfy
$\|v\|=1$ and $\supp(v)\subseteq S$, with the probability claimed in the statement
\begin{eqnarray}
|(Ev-P_M\tA P_Q^{\perp}v \, ,\, \tA P_Q^{\perp}v)| &\le &
\sqrt{1-\frac{\gamma}{2\delta}}\, \|Ev-P_M\tA P_Q^{\perp}v \|
\, \|\tA P_Q^{\perp}v\|\,  .\label{claim:ScalarProd}
\end{eqnarray}
Here the notation $(u,v)$ refers to the usual scalar product $u^*v$ of vectors $u$ and $v$ of the same dimension.
Assuming that the claim holds, we have indeed
\begin{align*}
\big\| Ev +
P_M^{\perp}\tA P_Q^{\perp}v\big\|^2&\ge
\big\| Ev-P_M\tA P_Q^{\perp}v \big\|^2+\big\| \tA P_Q^{\perp}v\big\|^2-2
|(Ev-P_M\tA P_Q^{\perp}v \, ,\, \tA P_Q^{\perp}v)|\\
&\ge\big\| Ev \big\|^2+\big\| P_M^{\perp}\tA P_Q^{\perp}v\big\|^2-2
\sqrt{1-\frac{\gamma}{2\delta}}\, \|Ev -P_M\tA P_Q^{\perp}v\| \,
\|\tA P_Q^{\perp}v\|\\
&\ge \Big(1-\sqrt{1-\frac{\gamma}{2\delta}}\Big)
\Big\{\big\| Ev-P_M\tA P_Q^{\perp}v \big\|^2+\big\| \tA P_Q^{\perp}v\big\|^2\Big\}\, ,
\end{align*}
which implies the thesis.

In order to prove the claim \eqref{claim:ScalarProd},
we notice that for any $v$, the  unit vector
$\tA P_Q^{\perp}v /\|\tA P_Q^{\perp}v \|$ belongs to the random linear space
$\im(\tA P_Q^{\perp}P_S)$. Here $P_S$ is the orthogonal projector
onto the subspace of vectors supported on $S$.
Further $\im(\tA P_Q^{\perp}P_S)$ is a uniformly random subspace of
dimension at most $N(\delta-\gamma)$. Also, the normalized vector
$(Ev-P_M\tA P_Q^{\perp}v)/\|Ev-P_M\tA P_Q^{\perp}v \|$ belongs to the
linear space of dimension at most $2t$ spanned the columns of $M$
and of $B$. The claim follows then from a standard concentration-of-measure
argument.  In particular applying Proposition \ref{propo:Concentration} for
\[
m=n,~~m\lambda=N(\delta-\gamma),~~d=2t~~{ and }~~\ve=\sqrt{1-\frac{\gamma}{2\delta}}-\sqrt{1-\frac{\gamma}{\delta}}
\]
yields
\[
\left(\frac{Ev-P_M\tA P_Q^{\perp}v}{\|Ev-P_M\tA P_Q^{\perp}v \|}\,,~\frac{\tA P_Q^{\perp}v }{\|\tA P_Q^{\perp}v \|}\right)\leq \sqrt{\lambda}+\ve=\sqrt{1-\frac{\gamma}{2\delta}}\,.
\]
(Notice that in Proposition \ref{propo:Concentration} is stated for the equivalent case of a random
sub-space of fixed dimension $d$, and a subspace of dimension scaling linearly with the ambient one.)
\end{proof}
Next we estimate the term $\|\tA P_Q^{\perp}v\|^2$ in the above lower bound.
\begin{lemma}\label{lemma:Event2}
Let $S\subseteq [N]$ be given such that $|S|\le N(\delta-\gamma)$,
for some $\gamma>0$. Then there exists constant $c_1=c_1(\gamma)$,
$c_2 = c_2(\gamma)$ such that the event
\begin{eqnarray*}
\cE_2 \equiv
\Big\{ \big\|\tA P_Q^{\perp}v\big\| \ge c_1(\gamma)
\|P_Q^{\perp}v\big\| \, \;\;\forall v\;
\mbox{ such that }\; \;\supp(v)\subseteq S\Big\}\, ,
\end{eqnarray*}
holds with probability $\prob\{\cE_2|\sigal{S}_t\}\ge 1-e^{-Nc_2}$.
\end{lemma}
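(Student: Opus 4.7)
The plan is to condition on $\sigal{S}_t$ and reduce the claim to a standard non-asymptotic lower bound on the smallest singular value of an i.i.d.\ Gaussian matrix. By the conditional representation (\ref{eq:Conditional_A}), $\tA$ is independent of $\sigal{S}_t$ and has the same i.i.d.\ $\normal(0,1/n)$ distribution as $A$. Let $P_S$ denote the orthogonal projector onto coordinates in $S$ and set
\begin{equation*}
V \;\equiv\; P_Q^{\perp}P_S(\reals^N) \;=\; \{\,P_Q^{\perp}v\,:\,\supp(v)\subseteq S\,\}.
\end{equation*}
Since $P_Q^{\perp}$ and $S$ are $\sigal{S}_t$-measurable, so is the subspace $V$, with $\dim V \le |S|\le N(\delta-\gamma)$. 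The event $\cE_2$ is precisely the statement $\sigma_{\rm min}(\tA|_V) \ge c_1(\gamma)$, where $\sigma_{\rm min}(\tA|_V) \equiv \inf\{\,\|\tA u\|:u\in V,\,\|u\|=1\,\}$.

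Next, I fix an orthonormal basis $u_1,\dots,u_m$ of $V$ (with $m=\dim V$) and set $U=[u_1|\cdots|u_m]\in\reals^{N\times m}$. A one-line covariance computation, exploiting $U^*U=I_m$ together with rotational invariance of the i.i.d.\ Gaussian law, shows that conditionally on $\sigal{S}_t$ the matrix $G\equiv \tA U\in\reals^{n\times m}$ has i.i.d.\ $\normal(0,1/n)$ entries, and that $\sigma_{\rm min}(\tA|_V)=\sigma_{\rm min}(G)$. So the problem reduces to controlling the smallest singular value of a rescaled i.i.d.\ Gaussian matrix of aspect ratio $m/n\le (\delta-\gamma)/\delta = 1-\gamma/\delta$, which is bounded away from $1$ uniformly in $t$ and in the realization on $\sigal{S}_t$.

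I then invoke the Gordon--Davidson--Szarek concentration bound: for any $s>0$,
\begin{equation*}
\prob\!\left\{\,\sigma_{\rm min}(G)\le 1-\sqrt{m/n}-s \;\Big|\;\sigal{S}_t\,\right\} \;\le\; e^{-ns^2/2}.
\end{equation*}
Choosing $s=\tfrac12\bigl(1-\sqrt{1-\gamma/\delta}\bigr)>0$ yields $\sigma_{\rm min}(G) \ge c_1(\gamma)\equiv \tfrac12\bigl(1-\sqrt{1-\gamma/\delta}\bigr)$ with conditional probability at least $1-e^{-Nc_2(\gamma)}$, for $c_2(\gamma)=\delta s^2/4>0$ (absorbing the factor $n/N\to\delta$, valid for $N$ large enough). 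This is the announced inequality.

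I expect no serious obstacle. The only points requiring care are the $\sigal{S}_t$-measurability of $V$ and the orthogonal invariance of the distribution of $\tA$, both of which are immediate from (\ref{eq:Conditional_A}) and the definition of $Q_t$. Crucially, the resulting constants $c_1(\gamma),c_2(\gamma)$ depend only on $\gamma$ (and $\delta$), and \emph{not} on the iteration index $t$, which is exactly what Proposition \ref{propo:PSD} will need when it is combined with Lemma \ref{lemma:ConvergenceSupport}.
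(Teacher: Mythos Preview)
Your proof is correct and follows essentially the same route as the paper: both reduce to the subspace $V=\im(P_Q^{\perp}P_S)$, use rotational invariance of $\tA$ (independent of $\sigal{S}_t$) to identify $\sigma_{\rm min}(\tA|_V)$ with the smallest singular value of an $n\times m$ i.i.d.\ Gaussian matrix with $m\le N(\delta-\gamma)$, and then invoke an exponential tail bound on that singular value. The only difference is cosmetic---the paper cites large-deviation estimates from \cite{LitvakEtAl} without giving explicit constants, whereas you spell out the Davidson--Szarek bound and extract concrete values for $c_1(\gamma),c_2(\gamma)$.
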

\begin{proof}
Let $V$ be the linear space $V=\im(P_Q^{\perp}P_S)$.
Of course the dimension of $V$ is at most $N(\delta-\gamma)$.
Then we have  (for all vectors with  $\supp(v)\subseteq S$)
\begin{eqnarray}
\big\|\tA P_Q^{\perp}v\big\| \ge \sigma_{\rm min}(\tA|_{V}) \, \|P_Q^{\perp}v\big\|\, ,
\end{eqnarray}
where $\tA|_V$ is the restriction of $\tA$ to the subspace $V$.
By invariance of the distribution of $\tA$ under rotation,
$\sigma_{\rm min}(\tA|_{V})$ is distributed as the minimum singular value
of a gaussian matrix of dimensions $N\delta\times {\rm dim}(V)$.
The latter is almost surely bounded away from $0$
as $N\to\infty$,
 since ${\rm dim}(V)\le N(\delta-\gamma)$ (see for instance Theorem
\ref{prop:marchenko-pastur}). Large deviation estimates
\cite{LitvakEtAl} imply that the probability that the minimum singular
value is smaller than a constant $c_1(\gamma)$ is exponentially small.
\end{proof}

Finally a simple bound to control the norm of $Ev$.
\begin{lemma}\label{lemma:EvBound}
There exists a constant $c = c(t)>0$ such that, defining the event,
\begin{eqnarray}
\cE_3 \equiv
\big\{\|EP_Qv\|\ge c(t)\|P_Qv\|\, ,
\|EP_Q^{\perp}v\|\le c(t)^{-1}\|P^{\perp}_Qv\|
,\;\mbox{ for all }\;v\in \reals^N
\big\}\, ,
\end{eqnarray}
we have that $\cE_3$ holds eventually almost surely as $N\to\infty$.
\end{lemma}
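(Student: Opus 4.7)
The plan is to use Lemma \ref{lemma:AvApprox} to split the action of $E$ on $\reals^N$ into two pieces, one on $\im(P_Q)$ and one on $\im(P_Q^{\perp})$, and then bound each piece using the spectral estimates of Lemma \ref{lemma:Svalues_t}. Since $E=E_t$ is a deterministic function of $Q,M,X,Y$, which are all $\sigal{S}_t$-measurable, the event $\cE_3$ is determined by these matrices, so it suffices to work on the almost-sure event where Lemma \ref{lemma:Svalues_t} applies.

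For the lower bound, the first term in Lemma \ref{lemma:AvApprox} gives $EP_Qv = Y(Q^*Q)^{-1}Q^*P_Qv$. On the good event $Q$ has full column rank, so we may write $P_Qv=Qw$ uniquely. Then $EP_Qv=Yw$ and $\|P_Qv\|^2=w^*Q^*Qw$, yielding
\[
\frac{\|EP_Qv\|^2}{\|P_Qv\|^2}=\frac{w^*Y^*Yw}{w^*Q^*Qw}\ge \frac{\lambda_{\min}(Y^*Y/N)}{\lambda_{\max}(Q^*Q/N)}\ge c^2,
\]
where $c=c(t)$ is the constant supplied by Lemma \ref{lemma:Svalues_t}.

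For the upper bound, set $u=P_Q^{\perp}v$ and compute the quadratic form
\[
\|M(M^*M)^{-1}X^*u\|^2 = u^*X(M^*M)^{-1}X^*u\le \frac{\|X^*u\|^2}{\lambda_{\min}(M^*M)}\le \frac{\lambda_{\max}(X^*X/N)}{\lambda_{\min}(M^*M/N)}\|u\|^2\le c^{-2}\|u\|^2.
\]
Choosing $c(t)=c$ then delivers both inequalities defining $\cE_3$ simultaneously, on the event of Lemma \ref{lemma:Svalues_t}, which has probability tending to $1$ as $N\to\infty$.

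I do not anticipate any real obstacle here: the key (essentially one-line) observation is that the two summands in Lemma \ref{lemma:AvApprox}'s decomposition act on orthogonal subspaces of $\reals^N$, and each reduces to a ratio of extremal eigenvalues of the Gram matrices $Q^*Q$, $Y^*Y$, $M^*M$, $X^*X$ already controlled by Lemma \ref{lemma:Svalues_t}. The mildly non-trivial detail is the parametrization $P_Qv=Qw$ that converts the first inequality into a generalized eigenvalue problem between $Y^*Y$ and $Q^*Q$.
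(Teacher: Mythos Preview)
Your proposal is correct and follows essentially the same route as the paper: parametrize $P_Qv=Qw$ to reduce the first inequality to the ratio $\lambda_{\min}(Y^*Y)/\lambda_{\max}(Q^*Q)$, and handle the second via $EP_Q^{\perp}v=M(M^*M)^{-1}X^*P_Q^{\perp}v$ together with the eigenvalue bounds on $M^*M$ and $X^*X$ from Lemma~\ref{lemma:Svalues_t}. The paper's own proof is identical for the first bound and simply says the second is ``proved analogously''; you have merely spelled that analogous step out.
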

\begin{proof}
Without loss of generality take $v = Qa$ for $a\in\reals^t$.
By Lemma \ref{lemma:AvApprox} we have
$\|EP_Qv\|^2 = \|Ya\|^2\ge \lambda_{\rm min}(Y^*Y)\|a\|^2$.
Analogously $\|P_Qv\|^2= \|Qa\|^2\le  \lambda_{\rm max}(Q^*Q)\|a\|^2$.
The bound  $\|EP_Qv\|\ge c(t)\|P_Qv\|$ follows
then from Lemma \ref{lemma:Svalues_t}.

The bound $\|EP_Q^{\perp}v\|\le c(t)^{-1}\|P^{\perp}_Qv\|$
is proved analogously.
\end{proof}

We  can now prove Lemma \ref{lemma:ConcreteMinS} as promised.
\begin{proof}[Proof of Lemma \ref{lemma:ConcreteMinS}]
By Lemma \ref{lemma:EvBound} we can assume that event $\cE_3$ holds,
for some function $c= c(t)$ (without loss of generality $c<1/2$).
We will let $\cE$ be the event
\begin{eqnarray}
\cE\equiv\Big\{ \min_{\|v\|=1,\,\supp(v)\subseteq S}\big\| Ev +
P_M^{\perp}\tA P_Q^{\perp}v\big\|\le \alpha_2(t)\Big\}\, .
\end{eqnarray}
for $\alpha_2(t)>0$ small enough.

Let us assume first that $\|P_Q^{\perp}v\|\le c^2/10$,
whence
\begin{align*}
\| E v-P_M\tA P_Q^{\perp}\| &\ge \|EP_Qv\|-\|EP_Q^{\perp}v\|- \|P_M\tA P_Q^{\perp}v\|\\
&\ge c\|P_Qv\| - (c^{-1}+\|\tA\|_2)\|P_Q^{\perp}v\|\\
&\ge \frac{c}{2}-\frac{c}{10}-\|\tA\|_2\frac{c^2}{10} =
\frac{2c}{5}-\|\tA\|_2\frac{c^2}{10}\,,
\end{align*}
where the last inequality uses $\|P_Qv\|=\sqrt{1-\|P_Q^{\perp}v\|^2}\ge 1/2$.
Therefore, using Lemma \ref{lemma:Pitagora}, we get
\begin{align*}
\prob\{\cE|\sigal{S}_t\}\le
\prob\Big\{\frac{2c}{5}-\|\tA\|_2\frac{c^2}{10}\le \sqrt{\frac{4\delta}{\gamma}}
\alpha_2(t)\Big|
\sigal{S}_t\Big\}+ e^{-Na}\, ,
\end{align*}
and the thesis follows from large deviation bounds on the
norm $\|\tA\|_2$ \cite{Ledoux} by first taking  $c$
small enough,
and then choosing $\alpha_2(t)<\frac{c}{5}\sqrt{\frac{\gamma}{4\delta}}$.

Next we assume $\|P_Q^{\perp}v\|\ge c^2/10$.
Due to Lemma \ref{lemma:Pitagora} and \ref{lemma:Event2}
we can assume that events $\cE_1$ and $\cE_2$ hold. Therefore
\begin{eqnarray*}
\big\| Ev +
P_M^{\perp}\tA P_Q^{\perp}v\big\|\ge \Big(\frac{\gamma}{4\delta}\Big)^{1/2}
\|\tA P_Q^{\perp}v\big\|\ge \Big(\frac{\gamma}{4\delta}\Big)^{1/2}c_1(\gamma)
\|P_Q^{\perp}v\|\,,
\end{eqnarray*}
which proves our thesis.
\end{proof}
%
%**********************************
%
\subsection{Proof of Lemma \ref{lemma:ConvergenceSupport}}
\label{sec:ConvergenceSupport}

The key step consists in establishing the following result,
which will be instrumental in the proof of Lemma \ref{lemma:Convergence}
as well (and whose proof is deferred to Appendix \ref{app:TauPlus}).
\begin{lemma}\label{lemma:Tau_Plus}
Assume $\alpha>\alpha_{\rm min}(\delta)$ and let $\{\covz_{s,t}\}$
be defined by the recursion (\ref{eq:2-times-SE}) with initial condition
(\ref{eq:Initial-2times-SE}). Then there exists constants $\finite_1$,
$\ra_1>0$ such that for all $t\ge 0$
\begin{eqnarray}
\big|\covz_{t,t}-\tau_*^2\big|& \le &\finite_1\, e^{-\ra_1\, t}\, ,
\label{eq:Exponential_1time}\\
\big|\covz_{t,t+1}-\tau_*^2\big|&\le &\finite_1\, e^{-\ra_1\, t}\, .
\label{eq:Exponential_2times}
\end{eqnarray}
\end{lemma}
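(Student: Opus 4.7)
The plan is to prove the two inequalities separately. The first is essentially one-dimensional and follows from Proposition \ref{propo:UniqFP}, while the second requires a linearization of the two-time recursion \eqref{eq:2-times-SE} about the common fixed point $(\tau_*^2,\tau_*^2,\tau_*^2)$.

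For \eqref{eq:Exponential_1time}, I will use the identity $\covz_{t,t}=\tau_t^2$ (obtained by specializing \eqref{eq:2-times-SE} to $s=t$) and work with the one-dimensional map $g(\tau^2)\equiv \seF(\tau^2,\alpha\tau)$. Proposition \ref{propo:UniqFP} supplies both the convergence $\tau_t\to\tau_*$ and the strict contractivity $|g'(\tau_*^2)|<1$. By continuity there exist $\rho\in(0,1)$ and a neighborhood $U$ of $\tau_*^2$ on which $|g'|\le\rho$, and the monotone convergence guarantees $\tau_t^2\in U$ for all $t$ past some $t_0$. Iterating the mean-value inequality $|\tau_{t+1}^2-\tau_*^2|\le\rho|\tau_t^2-\tau_*^2|$ for $t\ge t_0$ then yields the desired exponential decay.

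For \eqref{eq:Exponential_2times}, I will set $D_t\equiv\covz_{t,t+1}-\tau_*^2$ and derive a perturbed contraction for it. Viewing the right-hand side of \eqref{eq:2-times-SE} as a function $G(c_{11},c_{22},c_{12};\theta_1,\theta_2)$, I will Taylor-expand it around the fixed-point argument $(\tau_*^2,\tau_*^2,\tau_*^2,\theta_*,\theta_*)$, where it takes the value $\tau_*^2$. The crucial derivative, computed via Price's theorem, is
\begin{eqnarray*}
\frac{\partial G}{\partial c_{12}}\bigg|_{\rm FP}\;=\;\frac{1}{\delta}\,\E\bigl\{\eta'(X_0+\tau_* Z;\theta_*)\bigr\}\, ,
\end{eqnarray*}
where the identity $(\eta')^2=\eta'$ (valid since $\eta'\in\{0,1\}$ a.e.) has been used together with the fact that at the fixed point the two Gaussian marginals coincide. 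By the calibration identity \eqref{eq:calibration} together with $\lambda>0$ and $\alpha\tau_*>0$, this coefficient is strictly less than $1$; call it $\rho'$. The remaining partial derivatives of $G$ are uniformly bounded in a neighborhood of the fixed point. Setting $\epsilon_t\equiv|\covz_{t,t}-\tau_*^2|+|\covz_{t+1,t+1}-\tau_*^2|+|\theta_t-\theta_*|+|\theta_{t+1}-\theta_*|$, and using that $\theta_t=\alpha\tau_t$, I will obtain (once $t$ is large enough that all arguments lie in the chosen neighborhood) an inequality of the form $|D_{t+1}|\le \rho''|D_t|+C\epsilon_t$ with some $\rho''\in(\rho',1)$. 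Since the first part of the lemma (applied at $t$ and $t+1$) bounds $\epsilon_t$ by $Ce^{-\ra_1 t}$, a standard discrete Gronwall argument converts this into an exponential decay $|D_t|\le \finite_1 e^{-\ra_1' t}$, possibly with a slightly worse rate $\ra_1'\in(0,\ra_1)$.

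The principal obstacle is the non-smoothness of $\eta'$, which makes the Price/Stein computation of $\partial G/\partial c_{12}$ slightly delicate. I plan to justify it either by direct integration by parts against the two-dimensional Gaussian density (which is smooth and easily absorbs the distributional derivative of $\eta$) or by approximating $\eta(\cdot;\theta)$ with smooth thresholders and passing to the limit via dominated convergence using the global Lipschitz bound on $\eta$. Once this analytic point is handled, what remains is the familiar linearization of a contractive iterative map near its fixed point together with the bookkeeping needed to merge the two exponential rates.
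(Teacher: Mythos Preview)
Your approach is correct in its main idea and closely parallels the paper's: both identify the same contraction coefficient $\rho'=\delta^{-1}\E\{\eta'(X_0+\tau_*Z;\theta_*)\}<1$ (via the calibration relation with $\lambda>0$) as the key to exponential convergence of the off-diagonal entries. The paper's execution differs in two respects. First, instead of tracking $D_t=\covz_{t,t+1}-\tau_*^2$ directly, it changes coordinates to $y_{t,3}\equiv\covz_{t-1,t-1}-2\covz_{t-1,t}+\covz_{t,t}=\E\{(Z_{t-1}-Z_t)^2\}$ and studies the three-dimensional map $y_t\mapsto\G(y_t)$; the Jacobian of $\G$ at $y_*=(\tau_*^2,\tau_*^2,0)$ has spectral radius $\max\{\seF'(\tau_*^2),\rho'\}<1$, giving the rate. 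Second, and more substantively, the paper establishes \emph{global} convergence $y_t\to y_*$ before linearizing: it shows that $\G_*(x)\equiv\G_3(\tau_*^2,\tau_*^2,x)$ satisfies $\G_*'(x)\le\G_*'(0)=\rho'$ for all $x\in[0,2\tau_*^2)$, via a correlation-monotonicity lemma for $\prob\{Z_1\in I,\,Z_2\in I\}$ (proved through the Ornstein--Uhlenbeck spectral decomposition), together with an inductive check that $\covz_{t-1,t}>0$ throughout.

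Your outline has a circularity precisely where the paper inserts that global step: you invoke the perturbed contraction $|D_{t+1}|\le\rho''|D_t|+C\epsilon_t$ ``once $t$ is large enough that all arguments lie in the chosen neighborhood'', but one of those arguments is $c_{12}=\covz_{t,t+1}$ itself, whose proximity to $\tau_*^2$ is what you are proving. The easy repair is to observe that your Price-theorem derivative
\[
\frac{\partial G}{\partial c_{12}}(c_{11},c_{22},c_{12};\theta_1,\theta_2)\;=\;\frac{1}{\delta}\,\prob\bigl\{|X_0+Z_1|\ge\theta_1,\;|X_0+Z_2|\ge\theta_2\bigr\}\;\le\;\frac{1}{\delta}\,\prob\bigl\{|X_0+Z_1|\ge\theta_1\bigr\}
\]
is bounded by a marginal that depends only on $(c_{11},\theta_1)$ and equals $\rho'$ at $(\tau_*^2,\theta_*)$. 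Hence once the diagonal entries and thresholds have converged (your first step), the contraction in $c_{12}$ holds uniformly over its entire PSD-admissible range $|c_{12}|\le\sqrt{c_{11}c_{22}}$, and no a~priori smallness of $D_t$ is needed. With this remark added, your direct linearization is slightly leaner than the paper's route, at the price of not exhibiting the monotone structure that the $y_{t,3}$ coordinate makes visible.
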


It is also useful to prove the following fact.
\begin{lemma}\label{lemma:PosDef}
For any $\alpha>0$ and $T\ge 0$, the $T\times T$ matrix
$R_{T+1}\equiv\{\covz_{s,t}\}_{0\le s,t< T}$ is strictly positive definite.
\end{lemma}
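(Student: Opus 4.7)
The plan is to proceed by induction on $T$. The base case $T=1$ reduces to checking $\covz_{0,0} = \sigma^2 + \E\{X_0^2\}/\delta > 0$, which is immediate from $\sigma^2 > 0$.

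For the inductive step, the first move is to rewrite the quadratic form in a transparent way. Setting $U_{-1} \equiv -X_0$ and $U_t \equiv \eta(X_0+Z_t;\theta_t) - X_0$ for $t\ge 0$, the recursion \eqref{eq:2-times-SE} and the initial condition \eqref{eq:Initial-2times-SE} can be written uniformly as
\begin{equation*}
\covz_{s,t} \;=\; \sigma^2 + \frac{1}{\delta}\,\E\{U_{s-1}U_{t-1}\}\qquad\text{for all }s,t\ge 0.
\end{equation*}
Hence, for any $a=(a_0,\ldots,a_{T-1})\in\reals^T$,
\begin{equation*}
a^* R_{T+1}\, a \;=\; \sigma^2\Big(\sum_{s=0}^{T-1}a_s\Big)^{\!2} + \frac{1}{\delta}\,\E\Big\{\Big(\sum_{s=0}^{T-1}a_s U_{s-1}\Big)^{\!2}\Big\}.
\end{equation*}
This re-expression already gives positive semi-definiteness. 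If $a^*R_{T+1}a=0$, both terms must vanish, so $\sum_s a_s=0$ and $\sum_s a_s U_{s-1}=0$ almost surely. The first identity cancels the $-X_0$ contributions inside every $U_{s-1}$, so the second collapses to
\begin{equation*}
\sum_{t=0}^{T-2} a_{t+1}\,\eta(X_0+Z_t;\theta_t) \;=\; 0 \qquad\text{a.s.}
\end{equation*}

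The main step is then to argue that the family $\{\eta(X_0+Z_t;\theta_t)\}_{t=0}^{T-2}$ is linearly independent as an $L^2$ family. For each $k\in\{0,\ldots,T-2\}$ define
\begin{equation*}
A_k \;\equiv\; \{|X_0+Z_k|>\theta_k\} \;\cap\; \bigcap_{j\neq k}\{|X_0+Z_j|<\theta_j\}.
\end{equation*}
On $A_k$ all but the $k$-th term vanishes, and the identity forces $a_{k+1}\,\eta(X_0+Z_k;\theta_k)=0$ with the $\eta$ factor nonzero. By the induction hypothesis, the $(T-1)\times(T-1)$ sub-matrix is strictly positive definite, so $(Z_0,\ldots,Z_{T-2})$ is a non-degenerate centered Gaussian vector independent of $X_0$; conditional on $X_0=x$, the vector $(x+Z_0,\ldots,x+Z_{T-2})$ therefore has a strictly positive Lebesgue density on $\reals^{T-1}$. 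Since every $\theta_j=\alpha\tau_j$ is strictly positive (as $\tau_j^2\ge\sigma^2>0$ by the state-evolution recursion), the set defining $A_k$ has positive Lebesgue measure, yielding $\prob(A_k\mid X_0)>0$ and hence $\prob(A_k)>0$. Consequently $a_{k+1}=0$ for every $k$, and $\sum_s a_s=0$ then forces $a_0=0$, closing the induction.

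The only real obstacle is guaranteeing $\prob(A_k)>0$: this is where the induction hypothesis is indispensable, since without non-degeneracy of the Gaussian vector $(Z_0,\ldots,Z_{T-2})$ one cannot exclude hidden linear dependences that would reduce the relevant conditioning event to measure zero. Everything else is routine manipulation of the recursion and elementary properties of the soft-threshold function.
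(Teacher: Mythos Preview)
Your proof is correct and takes a genuinely different route from the paper's. The paper's argument is indirect: it identifies $\covz_{s,t}$ with the almost-sure limit $\lim_{N\to\infty}\langle m^s,m^t\rangle$ (established in the proof of Theorem~\ref{prop:state-evolution-2times}), so that $R_{T+1}=\delta\lim_{N\to\infty}M_{T+1}^*M_{T+1}/N$, and then invokes Lemma~\ref{lemma:Svalues_t} (whose proof uses the conditional-distribution decomposition of $A$ and random-matrix estimates) to conclude that this Gram matrix has minimum eigenvalue bounded away from zero. Your argument, by contrast, is purely probabilistic and self-contained: you read off the identity $\covz_{s,t}=\sigma^2+\delta^{-1}\E\{U_{s-1}U_{t-1}\}$ directly from the recursion and its boundary condition, use the induction hypothesis to guarantee that $(Z_0,\dots,Z_{T-2})$ is a \emph{non-degenerate} joint Gaussian (so that the $U_t$'s live on a common space and the open sets $A_k$ have positive probability), and then exploit the piecewise structure of the soft threshold to force each coefficient to vanish. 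The paper's approach is shorter given the machinery already in place and ties the lemma to the algorithmic picture; your approach is more elementary, does not touch the AMP iterates or the random matrix $A$ at all, and makes transparent exactly where the hypotheses $\sigma^2>0$ and $\alpha>0$ (hence $\theta_t>0$) are used.
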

\begin{proof}
In proof of Theorem \ref{prop:state-evolution-2times} we show that
\[
\covz_{s,t}=\lim_{N\to\infty}\<h^{s+1},h^{t+1}\>=\lim_{N\to\infty}\<m^{s},m^{t}\>\,,
\]
almost surely.  Hence, $R_{T+1}\asequal\delta\lim_{N\to\infty} (M_{T+1}^*M_{T+1}/N)$. Thus the result follows from Lemma \ref{lemma:Svalues_t}.
%
%Assume it is true for $R_T$ and by contradiction,
%that it is false for $R_{T+1}$. Then there exists coefficients
%$a_0$,\dots $a_T$ such that
%%
%\begin{eqnarray*}
%%
%0=\sum_{s,t=0}^T\covz_{s,t}a_sa_t = \sigma^2\Big(\sum_{t=0}^Ta_t\Big)^2
%+ \frac{1}{\delta}\E\Big\{ \Big(\sum_{t=0}^Ta_tU_t\Big)^2\Big\}\, ,
%%
%\end{eqnarray*}
%%
%where we defined $U_0=-X_0$ and, for $t=0,\dots,T-1$,
%%
%\begin{eqnarray*}
%%
%U_{t+1} = \eta(X_0+Z_t;\theta_t)
%%
%\end{eqnarray*}
%%
%with $(Z_0,\dots,Z_{T-1})$ a gaussian vector with zero mean and
%covariance $R_T$. This means that, conditionally on $X_0$
%$\sum_{t=0}^Ta_tU_t=0$ almost surely. But
%$\sum_{t=0}^Ta_tU_t = f(Z_0,\dots,Z_{T-1})$ for some non-constant continuous
%function $f:\reals^T\to\reals$. By Lemma {\bf ???} in \cite{BM-MPCS-2010},
%and using the induction hypothesis,
%$f(Z_0,\dots,Z_{T-1})$ cannot vanish identically.
\end{proof}

It is then relatively easy to deduce the following.
\begin{lemma}\label{lemma:Tau_Plus_ell}
Assume $\alpha>\alpha_{\rm min}(\delta)$ and let $\{\covz_{s,t}\}$
be defined by the recursion (\ref{eq:2-times-SE}) with initial condition
(\ref{eq:Initial-2times-SE}). Then
there exists constants $\finite_2$,
$\ra_2>0$ such that for all $t_1,t_2\ge t\ge 0$
\begin{eqnarray}
\big|\covz_{t_1,t_2}-\tau_*^2\big|\le \finite_2\, e^{-\ra_2\, t}\, .
\end{eqnarray}
\end{lemma}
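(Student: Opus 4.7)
The plan is to write the two-time recursion (\ref{eq:2-times-SE}) in the form $\covz_{s+1,t+1} = F(\covz_{s,s},\covz_{t,t},\covz_{s,t})$ with
\[
F(a,b,c) \equiv \sigma^2 + \frac{1}{\delta}\,\E\big\{[\eta(X_0+Z_a;\alpha\sqrt{a})-X_0][\eta(X_0+Z_b;\alpha\sqrt{b})-X_0]\big\}\,,
\]
where $(Z_a,Z_b)$ is centred gaussian, independent of $X_0$, with $\E[Z_a^2]=a$, $\E[Z_b^2]=b$, $\E[Z_aZ_b]=c$, and to show that $c\mapsto F(\cdot,\cdot,c)$ is a strict contraction near the diagonal fixed point $(\tau_*^2,\tau_*^2,\tau_*^2)$. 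Granted such a contraction, a discrete Gronwall argument along the off-diagonal indexed by $k\mapsto(k,k+d)$, combined with the exponential diagonal bounds from Lemma \ref{lemma:Tau_Plus}, will give the claim.

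For the contraction step I would compute $\partial_c F$ via the gaussian identity $\partial_c p_\Sigma(z_a,z_b)=\partial_{z_a z_b}p_\Sigma(z_a,z_b)$ and integration by parts (justified for the Lipschitz soft thresholder by smoothing and passing to the limit, with $\eta'(\,\cdot\,;\theta)=\ind_{|\,\cdot\,|>\theta}$ a.e.), obtaining
\[
\partial_c F(a,b,c) = \frac{1}{\delta}\,\E\big\{\eta'(X_0+Z_a;\alpha\sqrt{a})\,\eta'(X_0+Z_b;\alpha\sqrt{b})\big\}\,.
\]
Because $\eta'\in\{0,1\}$, Cauchy--Schwarz gives $|\partial_c F(a,b,c)|\le(1/\delta)\sqrt{\E\eta'(X_0+Z_a;\alpha\sqrt{a})\,\E\eta'(X_0+Z_b;\alpha\sqrt{b})}$; at $a=b=\tau_*^2$ this collapses to $(1/\delta)\E\{\eta'(X_0+\tau_*Z;\alpha\tau_*)\}$, which is strictly less than one by the calibration (\ref{eq:calibration}) together with $\lambda>0$. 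By continuity of $F$ in $(a,b)$ there then exist $\eps_0>0$ and a neighbourhood $\mathcal N$ of $(\tau_*^2,\tau_*^2)$ (uniform in admissible $c$) on which $|\partial_c F|\le 1-\eps_0$; smoothness of $F$ on $\mathcal N$ also provides a Lipschitz constant $L_0$ for $F$ in its first two arguments.

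With these two ingredients I would conclude as follows. Fix $t_1\le t_2$, set $d=t_2-t_1$, and let $u_k\equiv\covz_{k,k+d}-\tau_*^2$. By Lemma \ref{lemma:Tau_Plus} there exists $K_0$ independent of $d$ so that $(\covz_{k,k},\covz_{k+d,k+d})\in\mathcal N$ for all $k\ge K_0$, while $|\covz_{k,k}-\tau_*^2|\le\finite_1 e^{-\ra_1 k}$ for all $k$. Hence for $k\ge K_0$,
\[
|u_{k+1}| \le L_0\big(|\covz_{k,k}-\tau_*^2|+|\covz_{k+d,k+d}-\tau_*^2|\big) + (1-\eps_0)|u_k| \le 2L_0\finite_1\,e^{-\ra_1 k} + (1-\eps_0)|u_k|\,.
\]
Unrolling yields $|u_{t_1}|\le(1-\eps_0)^{t_1-K_0}|u_{K_0}|+2L_0\finite_1\sum_{k=K_0}^{t_1-1}(1-\eps_0)^{t_1-1-k}e^{-\ra_1 k}$, and the initial value $|u_{K_0}|$ is bounded uniformly in $d$ via $|\covz_{K_0,K_0+d}|\le\sqrt{\covz_{K_0,K_0}\covz_{K_0+d,K_0+d}}$. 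A standard two-rate geometric sum estimate then produces $|u_{t_1}|\le\finite_2 e^{-\ra_2 t_1}$ for any $0<\ra_2<\min(\ra_1,-\log(1-\eps_0))$ and some $\finite_2$; the finitely many $t_1<K_0$ are absorbed by enlarging $\finite_2$. Since $t_1\ge t$ by hypothesis this is the claim.

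The main obstacle is the derivative identity $\partial_c F=(1/\delta)\E[\eta'\,\eta']$ and its upgrade from a pointwise value at the fixed point to a \emph{uniform} bound $|\partial_c F|\le 1-\eps_0$ on a neighbourhood: that is where the analytical content lives, since it converts the calibration inequality $\lambda>0$ into genuine contractivity of the off-diagonal dynamics. Once that is in place, the rest is the routine discrete-Gronwall / geometric-series computation indicated above.
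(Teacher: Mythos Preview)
Your argument is correct, but it takes a genuinely different route from the paper. You run a direct Gronwall iteration along the off-diagonal $k\mapsto(k,k+d)$, and the analytical work you identify as the ``main obstacle'' --- the formula $\partial_c F=(1/\delta)\E[\eta'\,\eta']$ and the uniform contraction bound via Cauchy--Schwarz and calibration --- is essentially the same computation that the paper already carries out inside the proof of Lemma~\ref{lemma:Tau_Plus} (there in the guise of $\G'_*(0)<1$ and the spectral radius of the Jacobian). The paper, having thus established \emph{both} conclusions of Lemma~\ref{lemma:Tau_Plus}, does not repeat any of this: it simply notes that by Lemma~\ref{lemma:PosDef} one may realize $\{Z_t\}$ jointly gaussian with $\E[Z_sZ_t]=\covz_{s,t}$, telescopes $Z_{t_1}-Z_{t_2}=\sum_{i=t_1}^{t_2-1}(Z_{i+1}-Z_i)$, applies Cauchy--Schwarz to get $\E[(Z_{t_1}-Z_{t_2})^2]\le\big(\sum_i \E[(Z_{i+1}-Z_i)^2]^{1/2}\big)^2$, and bounds each increment $\E[(Z_{i+1}-Z_i)^2]=\covz_{i,i}-2\covz_{i,i+1}+\covz_{i+1,i+1}\le 4\finite_1 e^{-\ra_1 i}$ using the one-step off-diagonal estimate (\ref{eq:Exponential_2times}). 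So the trade-off is: your approach uses only the diagonal conclusion (\ref{eq:Exponential_1time}) of Lemma~\ref{lemma:Tau_Plus} but then has to redo the contraction analysis; the paper uses both conclusions of Lemma~\ref{lemma:Tau_Plus} and converts them into the general statement by a five-line telescoping trick with no further study of the recursion.
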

\begin{proof}
By triangular inequality and Eq.~(\ref{eq:Exponential_1time}), we have
\begin{align}
\big|\covz_{t_1,t_2}-\tau_*^2\big|\le \frac{1}{2}
\big|\covz_{t_1,t_1}-2\covz_{t_1,t_2}+\covz_{t_2,t_2}\big|+
\finite_1\, e^{-\ra_1\, t}\, .\label{eq:ExpTriang}
\end{align}
By  Lemma \ref{lemma:PosDef} there exist gaussian random variables
$Z_0,Z_1,Z_2,\dots$ on the same probability space with $\E\{Z_t\}=0$
and $\E\{Z_tZ_s\}= \covz_{t,s}$ (in fact in proof of Theorem \ref{prop:state-evolution-2times} we show that $\{Z_i\}_{T\ge i\ge0}$ is the weak limit of the empirical distribution of $\{h^{i+1}\}_{T\ge i\ge0}$). Then (assuming, without loss of generality,
$t_2>t_1$) we have
\begin{align*}
\big|\covz_{t_1,t_1}-2\covz_{t_1,t_2}+\covz_{t_2,t_2}\big|
&=\E\{(Z_{t_1}-Z_{t_2})^2\}\\
&= \sum_{i,j=t_1}^{t_2-1}\E\{(Z_{i+1}-Z_{i})(Z_{j+1}-Z_{j})\}\\
&\le \Big[\sum_{i=t_1}^{t_2-1}\E\{(Z_{i+1}-Z_{i})^2\}^{1/2}\Big]^2\\
&\le 4\finite_1\Big[\sum_{i=t_1}^{\infty}e^{-\ra_1i/2}\Big]^{2}\\
&\le \frac{4\finite_1}{(1-e^{-\ra_1/2})^2}\;
e^{-\ra_1t_1}\, ,
\end{align*}
which, together with Eq.~(\ref{eq:ExpTriang}) proves our claim.
\end{proof}

We are now in position to prove Lemma \ref{lemma:ConvergenceSupport}.
\begin{proof}[Proof of Lemma \ref{lemma:ConvergenceSupport}]
We will show that, under the assumptions of the Lemma,
$\lim_{N\to\infty}|S_{t_2}(\gamma)\setminus S_{t_1}(\gamma)|/N\le \xi$
almost surely, which implies our claim.
Indeed, by Theorem \ref{prop:state-evolution-2times} we have
\begin{align*}
\lim_{N\to\infty}\frac{1}{N}&\,|S_{t_2}(\gamma)\setminus S_{t_1}(\gamma)|
= \lim_{N\to\infty}\frac{1}{N}\sum_{i=1}^N\ind_{\big\{|v_i^{t_2}|\ge 1-\gamma,
\;|v_i^{t_1}|< 1-\gamma\big\}}\\
%%
%&= \lim_{N\to\infty}\frac{1}{N}\sum_{i=1}^N\ind\{|v_i^{t_2}|\ge 1-\gamma,
%\;|v_i^{t_1}|< 1-\gamma\big\}\\
%%
&= \lim_{N\to\infty}\frac{1}{N}\sum_{i=1}^N
\ind_{\big\{|x^{t_2-1}+A^*z^{t_2-1}-x^{t_2}|\ge (1-\gamma)\theta_{t_2-1},
\;|x^{t_1-1}+A^*z^{t_1-1}-x^{t_1}|< (1-\gamma)\theta_{t_2-1}\big\}}\\
& = \prob\big\{|X_0+Z_{t_2-1}|\ge (1-\gamma)\theta_{t_2-1},\;
|X_0+Z_{t_1-1}|< (1-\gamma)\theta_{t_1-1}\big\}\equiv P_{t_1,t_2}\, ,
\end{align*}
where $(Z_{t_1},Z_{t_2})$ are jointly normal with
$\E\{Z_{t_1}^2\} = \covz_{t_1,t_1}$, $\E\{Z_{t_1}Z_{t_2}\} = \covz_{t_1,t_2}$,
$\E\{Z_{t_2}^2\} = \covz_{t_2,t_2}$.
(Notice that, although the function $\ind\{\,\cdots\,\}$ is discontinuous,
the random vector $(X_0+Z_{t_1-1},X_0+Z_{t_2-1})$ admits a density and hence
Theorem \ref{prop:state-evolution-2times} applies by weak convergence
of the empirical distribution of
$\{(x_{i}^{t_1-1}+(A^*z^{t_1-1})_i\,,\,\,x_{i}^{t_2-1}+(A^*z^{t_2-1})_i)\}_{1\le i\le N}$.)

Let $a\equiv (1-\gamma)\alpha\tau_{*}$. By Proposition
\ref{propo:UniqFP}, for any $\ve >0$
and all  $t_*$ large enough
we have $|(1-\gamma)\theta_{t_{i}-1}-a|\le \ve$ for $i\in\{1,2\}$. Then
\begin{align*}
P_{t_1,t_2}&\le  \prob\big\{|X_0+Z_{t_2-1}|\ge a-\ve,\;
|X_0+Z_{t_1-1}|< a+\ve\big\}\\
&\le
\prob\big\{|Z_{t_1-1}-Z_{t_2-1}|\ge 2\ve\big\}+
\prob\{a-3\ve\le|X_0+Z_{t_{1}-1}|\le a+\ve\big\}\\
&\le \frac{1}{4\ve^2}[\covz_{t_1-1,t_1-1}-2\covz_{t_1-1,t_2-1}+\covz_{t_2-1,t_2-1}]+
\frac{4\ve}{\sqrt{2\pi\covz_{t_1-1,t_1-1}}}\\
&\le \frac{1}{\ve^2}\finite_2\, e^{-\ra_2t_*}
+\frac{\ve}{\tau_*}\, ,
\end{align*}
where the last inequality follows by Lemma \ref{lemma:Tau_Plus_ell}.
By taking $\ve = e^{-\ra_2\, t_*/3}$
we finally get (for some constant $C$) $P_{t_1,t_2}\le C\,  e^{-\ra_2t_*}$,
which implies our claim.
\end{proof}
%
%**************************************************************
%

\section*{Acknowledgement}

It is a pleasure to thank David Donoho and Arian Maleki for many
stimulating exchanges. We are also indebted with Jos\'e Bento who collaborated
in preparing Figures \ref{fig:geneExpression} to \ref{fig:PM1Matrices}.

An earlier version of this paper stated some auxiliary lemmas in terms
of convergence \emph{in probability}. We rectified this to
convergence \emph{almost sure} as for the main theorems (with
virtually no change in the proofs). 
We are grateful to Edgar Dobriban and Weijie Su for pointing out this inconsistency.

This work was partially supported by a Terman fellowship,
the NSF CAREER award CCF-0743978 and the NSF grant DMS-0806211.

\appendix
%
%**************************************************************
%
\section{Properties of the state evolution recursion}

\subsection{Proof of Proposition \ref{propo:UniqFP}}
\label{app:UniqFP}

It is a straightforward calculus exercise to compute the partial derivatives
\begin{align}
\frac{\partial \seF}{\partial\tau^2}(\tau^2,\theta) &=
\frac{1}{\delta}\, \E\Big\{\Phi\Big(\frac{X_0-\theta}{\tau}\Big)+
\Phi\Big(\frac{-X_0-\theta}{\tau}\Big)\Big\}-
\frac{1}{\delta}\,\E\Big\{\frac{X_0}{\tau}\,\phi\Big
(\frac{X_0-\theta}{\tau}\Big)-\frac{X_0}{\tau}\,
\phi\Big(\frac{-X_0-\theta}{\tau}\Big)\Big\}\, ,
\label{eq:deFtau2}\\
\frac{\partial \seF}{\partial\theta}(\tau^2,\theta) &=
\frac{2\theta}{\delta}\, \E\Big\{\Phi\Big(\frac{X_0-\theta}{\tau}\Big)+
\Phi\Big(\frac{-X_0-\theta}{\tau}\Big)\Big\}-
\frac{2\tau}{\delta}\,\E\Big\{\phi\Big
(\frac{X_0-\theta}{\tau}\Big)+
\phi\Big(\frac{-X_0-\theta}{\tau}\Big)\Big\}\, .
\label{eq:deFtheta}
\end{align}
From these formulae we obtain the total derivative
\begin{eqnarray}
\delta\, \frac{\de \seF}{\de\tau^2}(\tau^2,\alpha\tau) &= &
(1+\alpha^2)\,\E\Big\{\Phi\Big(\frac{X_0-\alpha\tau}{\tau}\Big)+
\Phi\Big(\frac{-X_0-\alpha\tau}{\tau}\Big)\Big\}\label{eq:deF}\\
&&-\E\Big\{\Big(\frac{X_0+\alpha\tau}{\tau}\Big)\,\phi\Big
(\frac{X_0-\alpha\tau}{\tau}\Big)-\Big(\frac{X_0-\alpha\tau}{\tau}\Big)\,
\phi\Big(\frac{-X_0-\alpha\tau}{\tau}\Big)\Big\}\, .\nonumber
\end{eqnarray}
Differentiating once more
\begin{eqnarray}
\delta\frac{\de^2 \seF}{\de(\tau^2)^2}(\tau^2,\alpha\tau) = -\frac{1}{2\tau^2}
\E\Big\{\Big(\frac{X_0}{\tau}\Big)^3\,\Big[\phi\Big
(\frac{X_0-\alpha\tau}{\tau}\Big)-\,
\phi\Big(\frac{-X_0-\alpha\tau}{\tau}\Big)\Big]\Big\}\, .\nonumber
\end{eqnarray}
Now we have
\begin{eqnarray}
u^3[\phi(u-\alpha)-\phi(-u-\alpha)]\ge 0\, ,
\end{eqnarray}
with the inequality being strict whenever $\alpha>0$, $u\neq 0$. It follows
that $\tau^2\mapsto\seF(\tau^2,\alpha\tau)$ is
concave, and strictly concave provided $\alpha>0$ and $X_0$ is not identically
$0$.

From Eq.~(\ref{eq:deF}) we obtain
\begin{eqnarray}
\lim_{\tau^2\to\infty}
\frac{\de \seF}{\de\tau^2}(\tau^2,\alpha\tau) =  \frac{2}{\delta}
\big\{(1+\alpha^2)\Phi(-\alpha)-\alpha\, \phi(\alpha)\big\}\, ,
\end{eqnarray}
which is strictly positive for all $\alpha\ge 0$.
To see this, let
$f(\alpha)\equiv (1+\alpha^2)\Phi(-\alpha)-\alpha\, \phi(\alpha)$,
and notice that $f'(\alpha) = 2\alpha\Phi(-\alpha)-2\phi(\alpha)<0$,
and $f(\infty) = 0$.

Since $\tau^2\mapsto\seF(\tau^2,\alpha\tau)$ is concave, and strictly
increasing for $\tau^2$ large enough, it also follows that it is increasing
everywhere.

Notice that $\alpha\mapsto f(\alpha)$ is strictly decreasing
with $f(0) = 1/2$.
Hence, for  $\alpha>\alpha_{\rm min}(\delta)$, we have
$\seF(\tau^2,\alpha\tau)>\tau^2$ for $\tau^2$ small enough and
$\seF(\tau^2,\alpha\tau)<\tau^2$ for $\tau^2$ large enough.
Therefore the fixed point equation admits at least one solution.
It follows from the concavity of $\tau^2\mapsto\seF(\tau^2,\alpha\tau)$
that the solution is unique and that the sequence of iterates  $\tau_t^2$
converge to $\tau_*$.
\eprooft
%
%*******************************************
%
\subsection{Proof of Proposition \ref{propo:Lambda}}
\label{app:Lambda}

As a first step, we claim that $\alpha\mapsto\tau_*^2(\alpha)$
is continuously differentiable on $(0,\infty)$. Indeed this is defined as
the unique solution of
\begin{eqnarray}
\tau_*^2 = \seF(\tau^2_*,\alpha\tau_*)\, .
\end{eqnarray}
Since $(\tau^2,\alpha)\mapsto \seF(\tau^2_*,\alpha\tau_*)$ is continuously
differentiable and $ 0\le\frac{\de\seF}{\de\tau^2}(\tau^2_*,\alpha\tau_*)<1$
(the second inequality being a consequence of concavity plus
$\lim_{\tau^2\to\infty}\frac{\de\seF}{\de\tau^2}(\tau^2,\alpha\tau)<1$,
both shown in the proof of Proposition \ref{propo:UniqFP}),
the claim follows from the implicit function theorem applied to the mapping
$(\tau^2,\alpha)\mapsto [\tau^2-F(\tau^2,\alpha)]$.

Next notice that $\tau_*^2(\alpha)\to +\infty$ as $\alpha\downarrow \alpha_{\rm min}(\delta)$. Indeed, introducing the notation $\seF'_{\infty}\equiv
 \lim_{\tau^2\to\infty}\frac{\de\seF}{\de\tau^2}(\tau^2,\alpha\tau)$,
we have, again by concavity,
\begin{align*}
\tau_*^2 \ge \seF(0,0)+ \seF'_{\infty}\tau_*^2\, ,
\end{align*}
i.e. $\tau_*^2\ge \seF(0,0)/(1- \seF'_{\infty})$. Now
$ \seF(0,0)\ge\sigma^2$, while $\seF'_{\infty}\uparrow 1$
as $\alpha\downarrow\alpha_{\rm min}(\delta)$
(shown in the proof of Proposition \ref{propo:UniqFP}),
whence the claim follows.

Finally $\tau_*^2(\alpha)\to \sigma^2+\E\{X_0^2\}/\delta$ as $\alpha\to\infty$.
Indeed for any fixed $\tau^2>0$ we have $\seF(\tau^2,\alpha\tau)\to
\sigma^2+\E\{X_0^2\}/\delta$ as $\alpha\to\infty$ whence the claim follows
by uniqueness of $\tau_*$.

Next consider the function $(\alpha,\tau^2)\mapsto g(\alpha,\tau^2)$
defined by
\begin{eqnarray*}
g(\alpha,\tau^2)\equiv \alpha\tau\Big\{1 -
\frac{1}{\delta}\, \prob\{|X_0+\tau\,Z|\ge\alpha\tau\}
\Big\} \, .
\end{eqnarray*}
Notice that $\lambda(\alpha) = g(\alpha,\tauinf^2(\alpha))$.
Since $g$ is continuously differentiable, it follows that
$\alpha\mapsto \lambda(\alpha)$ is continuously differentiable as well.

Next consider $\alpha\downarrow\alpha_{\rm min}$, and let
$l(\alpha)\equiv 1 - \frac{1}{\delta}\,
\prob\{|X_0+\tau_*\,Z|\ge\alpha\tau_*\}$. Since $\tau_*\to +\infty$
in this limit, we have
\begin{eqnarray*}
l_*\equiv\lim_{\alpha\to\alpha_{\rm min}+}l(\alpha)= 1 - \frac{1}{\delta}\,
\prob\{|Z|\ge\alpha_{\rm min}\} = 1 - \frac{2}{\delta}\, \Phi(-\alpha_{\rm min})
\, .
\end{eqnarray*}
Using the characterization of $\alpha_{\rm min}$ in Eq.~\eqref{eq:AlphaMin}
(and the well known inequality $\alpha\Phi(-\alpha)\le\phi(\alpha)$
valid for all $\alpha>0$), it is immediate to show that
$l_*<0$. Therefore
\begin{eqnarray*}
\lim_{\alpha\to\alpha_{\rm min}+}\lambda(\alpha) = l_*\lim_{\alpha\to\alpha_{\rm min}+}
\alpha\tau_*(\alpha) = -\infty\, .
\end{eqnarray*}

Finally let us consider the limit $\alpha\to\infty$.
Since $\tau_*(\alpha)$ remains bounded, we have
$\lim_{\alpha\to\infty}\prob\{|X_0+\tau_*\,Z|\ge\alpha\tau_*\}=0$
whence
\begin{eqnarray*}
\lim_{\alpha\to\infty}\lambda(\alpha) = \lim_{\alpha\to\infty}
\alpha\tau_*(\alpha) = \infty\, .
\end{eqnarray*}
\eprooft
%
%*******************************************
%
\subsection{Proof of Corollary \ref{coro:AlphaUnique}}
\label{app:LambdaBis}

By Proposition \ref{propo:Lambda}, it is sufficient to prove that,
for any $\lambda>0$ there exists a unique $\alpha>\alpha_{\rm min}$
such that $\lambda(\alpha) = \lambda$.
Assume by contradiction that there are two distinct
such values $\alpha_1$, $\alpha_2$.

Notice that in this case, the function $\alpha(\lambda)$
is not defined uniquely and we can apply Theorem \ref{thm:Risk}
to both choices $\alpha(\lambda)=\alpha_1$ and $\alpha(\lambda) = \alpha_2$.
Using the test function $\psi(x,y) = (x-y)^2$ we deduce that
\begin{eqnarray*}
\lim_{N\to\infty}\frac{1}{N}\|\hx-x_0\|^2 = \E\big\{[\eta(X_0+\tau_*Z\,;\,
\alpha\tau_*)-X_0]^2\big\} = \delta(\tau_*^2-\sigma^2)\, .
\end{eqnarray*}
Since the left hand side does not depend on the choice of $\alpha$,
it follows that $\tau_*(\alpha_1) = \tau_*(\alpha_2)$.

Next apply Theorem \ref{thm:Risk} to
the function  $\psi(x,y) = |x|$. We get
\begin{eqnarray*}
\lim_{N\to\infty}\frac{1}{N}\|\hx\|_1 = \E\big\{|\eta(X_0+\tau_*Z\,;\,
\alpha\tau_*)|\big\} \, .
\end{eqnarray*}
For fixed $\tau_*$, $\theta\mapsto\E\big\{|\eta(X_0+\tau_*Z\,;\,
\theta)|\big\}$ is strictly decreasing in $\theta$. It follows that
$\alpha_1\tau_*(\alpha_1)=\alpha_2\tau_*(\alpha_2)$.
Since we already proved that  $\tau_*(\alpha_1) = \tau_*(\alpha_2)$,
we conclude $\alpha_1=\alpha_2$.
\eprooft
%
%*******************************************
%
\section{Proof of Theorem \ref{prop:state-evolution-2times}}
\label{app:state-evolution-2times}

First note that using representation \eqref{eq:h-as-z-and-x} we have $x^t+A^*z^t=x_0-h^{t+1}$.
Furthermore, using Lemma \ref{lem:elephant}(b)  we have almost surely
\begin{align*}
\lim_{N\to\infty}\frac{1}{N}\sum_{i=1}^N\psi
\big(x_{0,i}-h_i^{s+1},x_{0,i}-h_{i}^{t+1},x_{0,i}\big) &=
\E\Big\{\psi\big(X_0-\tZ_s,X_0-\tZ_t,X_0\big)\Big\}\,\\
&=\E\Big\{\psi\big(X_0+\tZ_s,X_0+\tZ_t,X_0\big)\Big\}\,\\
\end{align*}
for gaussian variables $\tZ_s$, $\tZ_t$ that have zero mean and are independent of $X_0$.
Define for all $s\ge0$ and $t\ge0$,
\begin{align}\label{eq:R=Rhat}
\tcovz_{t,s}\equiv \lim_{N\to\infty}\<h^{t+1},h^{s+1}\>=\E\{\tZ_t\tZ_s\}\,.
\end{align}
Therefore, all we need to show is that for all $s,t\ge0$: $\covz_{t,s}$ and $\tcovz_{t,s}$
are equal. We prove this by induction on $\max(s,t)$.

\begin{itemize}
\item For $s=t=0$ we have using Lemma \ref{lem:elephant}(b)  almost surely
\begin{align*}
\tcovz_{0,0}\equiv \lim_{N\to\infty}\<h^{1},h^{1}\>=\tau_0^2=\sigma^2+\frac{1}{\delta}\E\{X_0^2\}\,,
\end{align*}
that is equal to $\covz_{0,0}$.

\item \emph{Induction hypothesis:} Assume that for all $s\le k$ and $t\le k$,
\begin{align}\label{eq:Rst-induc-hyp}
\covz_{t,s}=\tcovz_{t,s}\,.
\end{align}

\item Then we prove Eq. \eqref{eq:Rst-induc-hyp} for $t=k+1$ (case $s=k+1$ is similar). First assume $s=0$ and $t=k+1$ in which using Lemma \ref{lem:elephant}(c) we have almost surely
\begin{align*}
\tcovz_{k+1,0}&=\lim_{N\to\infty}\<h^{k+2},h^{1}\>=\lim_{n\to\infty}\<m^{k+1},m^{0}\>\\
&=\lim_{n\to\infty}\<b^{k+1}-w,b^{0}-w\>=\sigma^2+\frac{1}{\delta}\lim_{N\to\infty}\<q^{k+1},q^{0}\>\\
&=\sigma^2+\frac{1}{\delta}\E\left\{[\eta(X_0-\tZ_{k};\theta_{k})-X_0][-X_0]\right\}\,,\\
&=\sigma^2+\frac{1}{\delta}\E\left\{[\eta(X_0+\tZ_{k};\theta_{k})-X_0][-X_0]\right\}\,,
\end{align*}
where the last equality uses $q^0=-x_0$ and Lemma \ref{lem:elephant}(b) for the pseudo-Lipschitz function $(h_i^{k+1},x_{0,i})\mapsto [\eta(x_{0,i}-h_i^{k+1};\theta_k)-x_{0,i}][-x_{0,i}]$. Here $X_0\sim p_{X_0}$ and $\tZ_{k}$ are independent and the latter is mean zero gaussian with $\E\{\tZ_{k}^2\}=\tcovz_{k,k}$.
But using the induction hypothesis, $\tcovz_{k,k}=\covz_{k,k}$ holds. Hence,
we can apply Eq. \eqref{eq:Initial-2times-SE} to obtain $\tcovz_{t,0}=\covz_{t,0}$.

Similarly, for the case $t=k+1$ and $s>0$, using Lemma \ref{lem:elephant}(b)(c) we have almost surely
\begin{align*}
\tcovz_{k+1,s}&=\lim_{N\to\infty}\<h^{k+2},h^{s+1}\>=\lim_{n\to\infty}\<m^{k+1},m^{s}\>\\
&=\lim_{n\to\infty}\<b^{k+1}-w,b^{s}-w\>=\sigma^2+\frac{1}{\delta}\lim_{N\to\infty}\<q^{k+1},q^{s}\>\\
&=\sigma^2+\frac{1}{\delta}\E\{[\eta(X_0+\tZ_{k};\theta_{k})-X_0][\eta(X_0+\tZ_{s-1};\theta_{s-1})-X_0]\}\,,
\end{align*}
for $X_0\sim p_{X_0}$ independent of zero mean gaussian variables $\tZ_{k}$ and $\tZ_{s-1}$ that satisfy
\[
\covz_{k,s-1}=\E\{\tZ_{k}\tZ_{s-1}\}\,,~~~\covz_{k,k}=\E\{\tZ_{k}^2\}\,,~~~~\covz_{s-1,s-1}=\E\{\tZ_{s-1}^2\}\,,
\]
using the induction hypothesis.  Hence the result follows.
\end{itemize}
\endproof
%
%*******************************************
%
\section{Proof of Lemma \ref{lemma:Convergence}}
\label{app:Convergence}

The proof of Lemma \ref{lemma:Convergence} relies on Lemma
\ref{lemma:Tau_Plus} which we will prove in the first subsection.
%
%************************************************************
%
\subsection{Proof of Lemma \ref{lemma:Tau_Plus}}
\label{app:TauPlus}

Before proving Lemma \ref{lemma:Tau_Plus}, we state and prove
the following property of gaussian random variables.
\begin{lemma}
\label{lemma:box-monotonicity}
Let $Z_1$ and $Z_2$ be jointly gaussian random variables with $\E(Z_1^2)=
\E(Z_2^2)=1$ and $\E(Z_1Z_2)=c\ge 0$. Let $I$ be a measurable subset
of the real line.
Then $\prob(Z_1\in I,~Z_2\in I)$ is an increasing function of
$c\in [0,1]$.
\end{lemma}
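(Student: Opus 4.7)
The plan is to expand $\ind_I$ in the basis of Hermite polynomials and appeal to Mehler's formula to express $\prob(Z_1\in I,Z_2\in I)$ as a power series in $c$ with non-negative coefficients.

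Let $\gamma$ denote the standard Gaussian measure on $\reals$ and let $\{h_n\}_{n\ge 0}$ denote the probabilist's Hermite polynomials, orthonormal in $L^2(\gamma)$. Since $\ind_I$ is bounded it lies in $L^2(\gamma)$, so we can write
\begin{equation*}
\ind_I \;=\; \sum_{n\ge 0} a_n\, h_n\, , \qquad a_n \;=\; \E[\,h_n(Z)\,\ind_I(Z)\,]\,,\;\; Z\sim\normal(0,1),
\end{equation*}
with convergence in $L^2(\gamma)$ and $\sum_n a_n^2 = \prob(Z\in I)<\infty$ by Parseval.

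The key classical identity (an immediate consequence of Mehler's formula for the Ornstein--Uhlenbeck semigroup) is that, whenever $(Z_1,Z_2)$ is centered jointly Gaussian with unit variances and correlation $c\in[0,1]$,
\begin{equation*}
\E\bigl[h_n(Z_1)\,h_m(Z_2)\bigr] \;=\; c^{\,n}\,\delta_{nm}\, .
\end{equation*}
I would recall the proof of this in one line: both $h_n(Z_1)$ and $h_m(Z_2)$ are eigenfunctions of the OU semigroup acting in each coordinate, so their covariance collapses to the diagonal.

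Given these ingredients the conclusion is immediate. Let $S_N(x)=\sum_{n\le N}a_nh_n(x)$. Since $S_N\to\ind_I$ in $L^2(\gamma)$, a standard Cauchy--Schwarz argument (using that $\|S_N(Z_i)\|_{L^2}$ is bounded uniformly in $N$) gives
\begin{equation*}
\E\bigl[\ind_I(Z_1)\,\ind_I(Z_2)\bigr] \;=\; \lim_{N\to\infty}\E[S_N(Z_1)S_N(Z_2)] \;=\; \lim_{N\to\infty}\sum_{n\le N} a_n^2\,c^n \;=\; \sum_{n\ge 0} a_n^2\,c^n.
\end{equation*}
Since every coefficient $a_n^2$ is non-negative and $c\ge 0$, this power series is non-decreasing on $[0,1]$, proving the claim.

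The only delicate point is the passage to the limit in the $L^2$ expansion, which is routine; the rest is just the spectral identity for Hermite polynomials, which I would state and use without proof.
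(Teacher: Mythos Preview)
Your proof is correct and is essentially the same argument as the paper's: the paper phrases it via the Ornstein--Uhlenbeck semigroup, writing $\prob(Z_1\in I,Z_2\in I)=\sum_\ell c^{\lambda_\ell/2}(\psi_\ell,f)^2_{\mu_G}$ from reversibility and the spectral theorem, which is exactly your Hermite expansion once one recalls that the $\psi_\ell$ are the Hermite polynomials and $\lambda_\ell=2\ell$. Your version is slightly more explicit about the $L^2$ limiting step, which the paper leaves implicit.
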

\begin{proof}
Let $\{X_s\}_{s\in\reals}$ be the standard Ornstein-Uhlenbeck process.
Then $(Z_1,Z_2)$ is distributed as $(X_0,X_t)$ for
$t$ satisfying $c=e^{-2t}$. Hence
\begin{eqnarray}
\prob(Z_1\in I,~Z_2\in I) = \E[f(X_0)f(X_t)] \, ,
\end{eqnarray}
for $f$ the indicator function of $I$. Since the Ornstein-Uhlenbeck process
is reversible with respect to the standard gaussian measure $\mu_{\rm G}$,
we have
\begin{eqnarray}
 \E[f(X_0)f(X_t)] = \sum_{\ell =0}^{\infty} e^{-\lambda_{\ell}t}
\, (\psi_\ell,f)^2_{\mu_{\rm G}} = \sum_{\ell =0}^{\infty} c^{\frac{\lambda_{\ell}}{2}}
\, (\psi_\ell,f)^2_{\mu_{\rm G}}
\end{eqnarray}
with $0\le \lambda_0\le \lambda_1\le \dots$ the eigenvalues of its generator,
$\{\psi_{\ell}\}_{\ell\ge 0}$ the corresponding eigenvectors and
$(\,\cdot\,,\,\cdot\,)_{\mu_{\rm G}}$ the scalar product in
$L^2(\mu_{\rm G})$. The thesis follows.
\end{proof}
We now pass to the proof of  Lemma \ref{lemma:Tau_Plus}.
\begin{proof}[Proof of Lemma \ref{lemma:Tau_Plus}]
%We start by noticing that the array $\{\covz_{t,s}\}_{t,s\ge 0}$
%does not change if the law $p_{X_0}$ is replaced by its symmetrization
%$(p_{X_0}+p_{-X_0})/2$ (here $p_{-X_0}$ is the law of $-X_0$).
%Hence we can assume without loss of generality that the law of $X_0$
%is symmetric.
It is convenient to change coordinates and define
\begin{eqnarray}
y_{t,1} \equiv \covz_{t-1,t-1}=\tau_{t-1}^2\, ,\;\;\;
y_{t,2} \equiv \covz_{t,t}=\tau_{t}^2\, ,
\;\;\;\;
y_{t,3} \equiv \covz_{t-1,t-1}-2\covz_{t,t-1}+\covz_{t,t}\, .
\end{eqnarray}
The vector $y_t=(y_{t,1},y_{t,2},y_{t,3})$
belongs to $\reals_+^3$ by Lemma \ref{lemma:PosDef}.
Using Eq.~\eqref{eq:2-times-SE}, it is immediate to see that this
is updated according to the mapping
\begin{eqnarray}
y_{t+1} & = & \G(y_t)\, ,\nonumber\\
\G_1(y_t) & \equiv & y_{t,2}\, ,\\
\G_2(y_t) & \equiv & \sigma^2+\frac{1}{\delta}
\E\{[\eta(X_0+Z_t;\alpha \sqrt{y_{t,2}})-X_0]^2\}\, ,
\label{eq:G2}\\
\G_3(y_t) & \equiv & \frac{1}{\delta}
\E\{[\eta(X_0+Z_t;\alpha \sqrt{y_{t,2}})-\eta(X_0+Z_{t-1};\alpha
\sqrt{y_{t,1}})]^2\}\, .\label{eq:G3}
\end{eqnarray}
where $(Z_t,Z_{t-1})$ are jointly gaussian with zero mean and covariance
determined by
$\E\{Z_t^2\}=y_{t,2}$, $\E\{ Z_{t-1}^2\}=y_{t,1}$,
$\E\{ (Z_t-Z_{t-1})^2\}=y_{t,3}$.
This mapping is defined for $y_{t,3}\le 2(y_{t,1}+y_{t,2})$.

Next we will show that by induction on $t$ that the stronger
inequality $y_{t,3}< (y_{t,1}+y_{t,2})$ holds for all $t$.
We have indeed
\begin{eqnarray*}
y_{t+1,1}+y_{t+1,2}-y_{t+1,3} = 2\sigma^2+\frac{2}{\delta}\,
\E\{\eta(X_0+Z_t;\alpha \sqrt{y_{t,2}})\,\eta(X_0+Z_{t-1};\alpha
\sqrt{y_{t,1}})\}\, .
\end{eqnarray*}
Since $\E\{Z_tZ_{t-1}\} = (y_{t,1}+y_{t,2}-y_{t,3})/2$
and $x\mapsto\eta(x;\theta)$ is monotone, we deduce that
 $y_{t,3}< (y_{t,1}+y_{t,2})$ implies
that $Z_t$, $Z_{t-1}$ are positively correlated. Therefore
$\E\{\eta(X_0+Z_t;\alpha \sqrt{y_{t,2}})\,\eta(X_0+Z_{t-1};\alpha
\sqrt{y_{t,1}})\}\ge 0$, which in turn yields
$y_{t+1,3}< (y_{t+1,1}+y_{t+1,2})$.

The initial
condition implied by Eq.~\eqref{eq:Initial-2times-SE} is
\begin{align*}
y_{1,1} & = \sigma^2+\frac{1}{\delta}\,\E\{X_0^2\}\,,\\
y_{1,2} & = \sigma^2+\frac{1}{\delta}\,\E\{[\eta(X_0+Z_0;\theta_0)-X_0]^2\}\,,\\
y_{1,3} & = \frac{1}{\delta}\,\E\{\eta(X_0+Z_0;\theta_0)^2\}\, ,
\end{align*}
It is easy to check that these satisfy
$y_{1,3}<y_{1,1}+y_{1,2}$. (This follows from
$\E\{X_0[X_0-\eta(X_0+Z_0;\theta_0)]\}>0$ because $x_0\mapsto x_0-
\E_Z\eta(x_0+Z_0;\theta_0)$ is monotone increasing.)
We can hereafter therefore assume $y_{t,3}< y_{t,1}+y_{t,2}$
for all $t$.

We will consider the above iteration for arbitrary
initialization $y_0$ (satisfying  $y_{0,3}< y_{0,1}+y_{0,2}$)
and will show the following three facts:
\begin{itemize}
\item[]{\bf Fact $(i)$.} As $t\to\infty$, $y_{t,1},y_{t,2}\to\tau_*^2$.
Further the convergence is monotone.
\item[]{\bf Fact $(ii)$.} If $y_{0,1}=y_{0,2}=\tau_*^2$ and $y_{0,3}\le 2\tau_*^2$,
then $y_{t,1}=y_{t,2}=\tau_*^2$ for all $t$ and $y_{t,3}\to 0$.
\item[]{\bf Fact $(iii)$.} The jacobian $J=J_{\G}(y_*)$ of  $\G$
at $y_* = (\tau_*^2,\tau_*^2,0)$ has spectral radius $\sigma(J)<1$.
\end{itemize}
By simple compactness arguments, Facts $(i)$ and $(ii)$ imply $y_t\to y_*$
as $t\to\infty$. (Notice that $y_{t,3}$ remains bounded
since $y_{t,3}\le (y_{t,1}+y_{t,2})$ and by the convergence
of $y_{t,1},y_{t,2}$.)
Fact $(iii)$ implies that convergence is exponentially
fast.

\vspace{0.2cm}

\emph{Proof of Fact $(i)$.} Notice that $y_{t,2}$ evolves independently
by $y_{t+1,2} = \G_2(y_t) = \seF(y_{2,t},\alpha\sqrt{y_{2,t}})$,
with $\seF(\,\cdot\,,\,\cdot\,)$ the state evolution mapping introduced in
Eq.~\eqref{eq:1-dim-SE}. It follows from Proposition \ref{propo:UniqFP}
that $y_{t,2}\to \tau_*^2$ monotonically for any initial condition.
Since $y_{t+1,1} = y_{t,2}$, the same happens for $y_{t,1}$.

\vspace{0.2cm}

\emph{Proof of Fact $(ii)$.} Consider the function $\G_*(x) =
\G_3(\tau_*^2,\tau_*^2,x)$. This is defined for $x\in [0,4\tau_*^2]$
but since $y_{t,3}<y_{t,1}+y_{t,2}$ we will only consider
$\G_*:[0,2\tau^2_*]\to \reals_+$. Obviously $\G_*(0) = 0$.
Further $\G_*$ can be represented as follows in terms of the
independent random variables $Z$, $W\sim\normal(0,1)$:
\begin{eqnarray}
\G_*(x) = \frac{1}{\delta}
\E\{[\eta(X_0+\sqrt{\tau_*^2-x/4}Z+(\sqrt{x}/2)W;\alpha \tau_*)-\eta(X_0+\sqrt{\tau_*^2-x/4}Z-(\sqrt{x}/2)W;\alpha
\tau_*)]^2\}\, .
\end{eqnarray}
A straightforward calculation yields
\begin{align*}
\G'_*(x) = \frac{1}{\delta}\E\{\eta'(X_0+Z_t; \alpha\tau_*)
\eta'(X_0+Z_{t-1}; \alpha\tau_*)\} =
\frac{1}{\delta}\prob\{|X_0+Z_t|\ge \alpha\tau_*,\;
|X_0+Z_{t-1}|\ge \alpha\tau_*\}\, ,
\end{align*}
where $Z_{t-1}=\sqrt{\tau_*^2-x^2/4}Z+(x/2)W$,
$Z_{t}=\sqrt{\tau_*^2-x^2/4}Z-(x/2)W$.
In particular, by Lemma \ref{lemma:box-monotonicity},
$x\mapsto \G_*(x)$ is strictly increasing (notice that the covariance
of $Z_{t-1}$ and $Z_{t}$ is $\tau_*^2-(x/2)$ which is decreasing in $x$).
Further
\begin{align*}
\G'_*(0) =\frac{1}{\delta}\E\{\eta'(X_0+\tau_*\, Z; \alpha\tau_*)\} \, .
\end{align*}
Hence, since $\lambda>0$ using Eq.~(\ref{eq:calibration}) we have $\G'(0)<1$.
Finally, by Lemma \ref{lemma:box-monotonicity}, $x\mapsto \G'(x)$
is decreasing in $[0,2\tau_*)$. It follows that $y_{t,3}\le \G'(0)^ty_{0,3}\to
0$ as claimed.

\vspace{0.2cm}

\emph{Proof of Fact $(iii)$.}
From the definition of $\G$, we have the following expression for
the Jacobian
\begin{align*}
J_{\G}(y_*) = \left(\begin{array}{ccc}
0 & 1 & 0\\
0 & \seF'(\tau_*^2) & 0\\
a & \G_*'(0) & b\\
\end{array}\right)
\end{align*}
where with an abuse of notation we let $\seF'(\tau_*^2) \equiv
\left.\frac{\de\phantom{\tau^2}}{\de\tau^2}\seF(\tau^2,\alpha\tau)
\right|_{\tau^2=\tau^2_*}$. Computing the eigenvalues of the above matrix,
we get
\begin{align*}
\sigma(J) = \max\big\{\, \seF'(\tau_*^2)\,,\, \G_*'(0) \,\big\}\, .
\end{align*}
Since $\G_*'(0)<1$ as proved above, and $\seF(\tau_*^2)<1$
as per Proposition \ref{propo:UniqFP}, the claim follows.
\end{proof}
%
%*****************************************************************88
%
\subsection{Lemma \ref{lemma:Tau_Plus} implies Lemma \ref{lemma:Convergence}}

Using representations (\ref{eq:b-as-z}) and (\ref{eq:q-as-x}) (i.e., $b^t=w-z^t$ and $q^t=x_0-x^t$) and Lemma \ref{lem:elephant}(c) we obtain,
\begin{align*}
\lim_{n\to\infty}\frac{1}{n}\|z^{t+1}-z^t\|_2^2&=\lim_{n\to\infty}\frac{1}{n}\|b^{t+1}-b^t\|_2^2\\
&\asequal\frac{1}{\delta}\lim_{N\to\infty}\frac{1}{N}\|q^{t+1}-q^t\|_2^2\\
&=\frac{1}{\delta}\lim_{N\to\infty}\frac{1}{N}\|x^{t+1}-x^t\|_2^2\,,
\end{align*}
where the last equality uses $q^t=x^t-x_0$.
%
%
%
%Recalling the definition $\omega_t\equiv(1/\delta)\<\eta'(A^*z^{t-1}+x^{t-1};\theta_{t-1})\>$, we have
%%
%\begin{align*}
%%
%\|z^{t+1}-z^t\|\le \|A\|_2\|x^{t+1}-x^t\|+|\omega_{t}-\omega_{t-1}|
%\, \|z^t\|+\omega_{t-1}\|z^{t}-z^{t-1}\|
%%
%\end{align*}
%%
%Using the fact that $\|A\|_2$ has an almost sure finite limit
%(see Theorem \ref{prop:marchenko-pastur}), using Lemma \ref{lemma:NormZ} and letting
%$\omega^{\infty}_* \equiv\lim_{N\to\infty} \omega_t$ (see
%Eq.~(\ref{eq:LimOmega_t})), we have
%%
%\begin{align*}
%%
%\lim_{N\to\infty}\frac{1}{n}\|z^{t+1}-z^t\|^2
%\le  C_1 \lim_{N\to\infty}\frac{1}{N}\|x^{t+1}-x^t\|^2+
%C_2(\omega_{t}^{\infty}-\omega_{t-1}^{\infty})^2
%+(\omega_{t-1}^{\infty})^2\lim_{N\to\infty}\frac{1}{n}\|z^{t}-z^{t-1}\|^2
%%
%\end{align*}
%%
%for some constants $C_1$ and $C_2$ (the latter determined
%using Lemma \ref{lemma:NormZ}). Since $\omega_{t}^{\infty}\to \omega^*$
%with $\omega_*<1$, we have for all
%%
%\begin{align*}
%%
%\lim_{N\to\infty}\frac{1}{n}\|z^{t+1}-z^t\|^2
%\le  (1-c)
%\lim_{N\to\infty}\frac{1}{n}\|z^{t}-z^{t-1}\|^2+\ve(t)\, .
%%
%\end{align*}
%%
%where $c\in (0,1)$ and, assuming the thesis holds for $\|x^{t+1}-x^t\|$,
%we have $\ve(t)\to 0$ as $t\to\infty$.
%Therefore, by Gromwall lemma,
%it is sufficient to prove the thesis for $\|x^{t+1}-x^t\|$.
%{\blue I THINK $\lim_{N\to\infty}\frac{1}{n}\|z^{t+1}-z^t\|^2$ and $\lim_{N\to\infty}\frac{1}{N}\|x^{t+1}-x^t\|^2$
%ARE A MULTIPLE OF EACH OTHER USING STATE EVOLUTION SO THE ABOVE CAN BE VERY MUCH SIMPLIFIED.}
Therefore, it is sufficient to prove the thesis for $\|x^{t+1}-x^t\|_2$.
By state evolution, Theorem \ref{prop:state-evolution-2times}, we have
\begin{align*}
\lim_{N\to\infty}\frac{1}{N}\|x^{t+1}-x^t\|_2^2& =
\E\big\{\big[\eta(X_0+Z_t;\theta_t)-\eta(X_0+Z_{t-1};\theta_{t-1})\big]^2\big\}\\
&\le 2(\theta_t-\theta_{t-1})^2+2\,\E\{(Z_t-Z_{t-1})^2\}=
2(\theta_t-\theta_{t-1})^2+2(\covz_{t,t}-2\covz_{t,t-1}+\covz_{t-1,t-1})\, .
\end{align*}
The first term vanishes as $t\to\infty$ because
$\theta_t=\alpha\tau_t\to\alpha\tau_*$  by Proposition \ref{propo:UniqFP}.
The second term instead vanishes since
$\covz_{t,t}\to \tau_*$, $\covz_{t,t-1}\to \tau_*$ by Lemma
\ref{lemma:Tau_Plus}.
\endproof
%
%*******************************************
%
\section{Proof of Lemma \ref{lemma:Svalues_t}}
\label{sec:Svalues_t}

First note that the upper bound on $\lambda_{\max}(R/N)$ is trivial since using representations
\eqref{eq:X=A*M}, \eqref{eq:Y=AQ}, $q^t=f_t(h^t,x_0)$, $m^t=g_t(b^t,w)$ and Lemma \ref{lem:elephant}(c)(d)
all entries of the matrix $R/N$ are bounded as $N\to\infty$ and the matrix
has fixed dimensions. Hence, we only focus on the lower-bound for $\lambda_{\min}(R/N)$.

The result for $R=M^*M$ and $R=Q^*Q$ follows directly from Lemma \ref{lem:elephant}(g) and Lemma 8 of \cite{BM-MPCS-2010}.

For $R=Y^*Y$ and $R=X^*X$ the proof is by induction on $t$.
%We start the induction by $t=1$ since for $t=0$ both $Y_t$ and $X_t$ are empty matrices.
\begin{itemize}
\item For $t=1$ we have $Y_t=b^0$ and $X_t=h^1+\xi_0q^0=h^1-x_0$.  Using Lemma
\ref{lem:elephant}(b)(c) we obtain almost surely
\begin{align*}
\lim_{N\to\infty}\frac{Y_t^*Y_t}{N}&=\delta\lim_{n\to\infty}\<b^0,b^0\>=\lim_{N\to\infty}\<q^0,q^0\>=\E\{X_0^2\}\,,\\
\lim_{N\to\infty}\frac{X_t^*X_t}{N}&=\lim_{N\to\infty}\<h^1-x^0,h^1-x^0\>=\E\{(\tau_0Z_0+X_0)^2\}=\sigma^2+\frac{\delta+1}{\delta}\E\{X_0^2\}\,,
\end{align*}
where both are positive by the assumption $\prob\{X_0\neq 0\}>0$.

\item \emph{Induction hypothesis:} Assume that for all $t\leq k$ there exist positive constants $c_X(t)$ and $c_Y(t)$ such that as
$N\to\infty$
\begin{align}
c_Y(t)&\leq\lambda_{\min}(\frac{Y_t^*Y_t}{N}) \,,\label{eq:induc-hyp-lambda_min(Y*Y)}\\
c_X(t)&\leq \lambda_{\min}(\frac{X_t^*X_t}{N}) \,.\label{eq:induc-hyp-lambda_min(X*X)}
\end{align}

\item Now we prove Eq. \eqref{eq:induc-hyp-lambda_min(Y*Y)} for $t=k+1$ (proof of \eqref{eq:induc-hyp-lambda_min(X*X)} is similar).
We will prove that there is a positive constant $c$
such that as $N\to\infty$, for any vector $\va_t\in \reals^t$:
\[
\<Y_t\,\va_t,Y_t\,\va_t\>\ge c\|\va_t\|_2^2\,.
\]
First write $\va_t=(a_1,\ldots,a_t)$ and denote its first $t-1$ coordinates with $\va_{t-1}$. Next, we consider the conditional distribution
$A|_{\sigal{S}_{t-1}}$. Using Eqs. \eqref{eq:Conditional_A} and \eqref{eq:Et} we obtain (since $Y_t=AQ_t$)
\begin{align*}
Y_t\,\va_t|_{\sigal{S}_{t-1}}&\ed A|_{\sigal{S}_{t-1}}(Q_{t-1}\,\va_{t-1}+a_tq^{t-1})\\
& = E_{t-1}(Q_{t-1}\,\va_{t-1}+a_tq^{t-1})+a_tP_{M_{t-1}}^{\perp}\tA q^{t-1}_{\perp}\,.
\end{align*}
Hence, conditional on $\sigal{S}_{t-1}$ we have, almost surely
\begin{align}\label{eq:Rep-Ya}
\lim_{N\to\infty}\<Y_t\,\va_t,Y_t\,\va_t\>
&= \lim_{N\to\infty}\frac{1}{N}\|Y_{t-1}\,\va_{t-1}+a_tE_{t-1}q^{t-1}\|^2+a_t^2 \lim_{N\to\infty}\<q^{t-1}_{\perp},q^{t-1}_{\perp}\>\,.
\end{align}
Here we used the fact that $\tA$ is a random matrix with
i.i.d. $\normal(0,1/n)$ entries independent of $\sigal{S}_{t-1}$
(cf. Lemma \ref{lem:prop-Gaussian-matrix}) which implies that almost surely

- $\lim_{N\to\infty}\<P_{M_{t-1}}^{\perp}\tA q^{t-1}_{\perp},P_{M_{t-1}}^{\perp}\tA q^{t-1}_{\perp}\>=\lim_{N\to\infty}\<q^{t-1}_{\perp},q^{t-1}_{\perp}\>$,

- $\lim_{N\to\infty}\<P_{M_{t-1}}^{\perp}\tA q^{t-1}_{\perp},Y_{t-1}\,\va_{t-1}+a_tb^{t-1}+a_t\lambda_{t-1}m^{t-2}\>=0$.

From Lemma \ref{lem:elephant}(g) we know that $\lim_{N\to\infty}\<q^{t-1}_{\perp},q^{t-1}_{\perp}\>$ is larger than a positive constant $\varsigma_t$. Hence, from representation \eqref{eq:Rep-Ya} and induction hypothesis \eqref{eq:induc-hyp-lambda_min(Y*Y)}
\begin{align*}
\lim_{N\to\infty}\<Y_t\,\va_t,Y_t\,\va_t\>
&\ge \lim_{N\to\infty}\left[\sqrt{c_Y(t-1)}\|\va_{t-1}\|-\frac{|a_t|}{\sqrt{N}}\|b^{t-1}+\lambda_{t-1}m^{t-2}\|\right]^2+a_t^2 \varsigma_t\,.
\end{align*}
To simplify the notation let $c'_t\equiv\lim_{N\to\infty}N^{-1/2}\|b^{t-1}+\lambda_{t-1}m^{t-2}\|$.  Now if $c'_t|a_t|\leq \sqrt{c_Y(t-1)}\|\va_{t-1}\|/2$ then
\begin{align}
\lim_{N\to\infty}\<Y_t\,\va_t,Y_t\,\va_t\>
&\geq \frac{c_Y(t-1)}{4}\|\va_{t-1}\|^2+a_t^2 \varsigma_t\geq \min\left(\frac{c_Y(t-1)}{4}\,,\varsigma_t\right)\|\va_t\|_2^2\,,
\end{align}
which proves the result. Otherwise, we obtain the inequality
\begin{align*}
\lim_{N\to\infty}\<Y_t\,\va_t,Y_t\,\va_t\>
&\geq a_t^2 \varsigma_t\ge \left(\frac{\varsigma_t\, c_Y(t-1)}{4(c'_t)^2+c_Y(t-1)}\right)\|\va_t\|_2^2\,,
\end{align*}
that completes the induction argument.
\end{itemize}

%
%*******************************************
%
\section{A concentration estimate}

The following proposition follows from standard concentration-of-measure
arguments.
\begin{proposition}\label{propo:Concentration}
Let $V\subseteq\reals^m$ a uniformly random linear space of dimension
$d$. For $\lambda\in (0,1)$, let $P_{\lambda}$ denote the orthogonal
projector on the first $m\lambda$ coordinates of $\reals^m$.
Define $Z(\lambda) \equiv\sup\{ \|P_{\lambda}v\|\, :\; v\in V,\; \|v\|=1\}$.
Then, for any $\ve>0$ there exists $c(\ve)>0$ such that, for all $m$
large enough (and $d$ fixed)
\begin{eqnarray}
\prob\{|Z(\kappa)-\sqrt{\lambda}|\ge \ve\}\le e^{-m\,c(\ve)}\, .
\end{eqnarray}
\end{proposition}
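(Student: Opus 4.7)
The plan is to represent the uniformly random subspace $V$ concretely via a Gaussian matrix, reduce $Z(\lambda)$ to the operator norm of a small ($d\times d$) random matrix, and then apply standard $\chi^2$ tail bounds. The fact that $d$ is held fixed while $m\to\infty$ is what makes the resulting estimate exponentially sharp.

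First I would invoke rotational invariance: a uniformly random $d$-dimensional subspace $V\subseteq \reals^m$ can be realized as $V=\im(G)$, where $G\in\reals^{m\times d}$ has i.i.d.\ $\normal(0,1)$ entries. Let $Q\in\reals^{m\times d}$ be the matrix whose columns are an orthonormal basis of $V$; equivalently $Q = G(G^*G)^{-1/2}$. Writing any unit $v\in V$ as $v = Qa$ with $a\in S^{d-1}$, we have
\begin{equation*}
Z(\lambda)^2 \;=\; \sup_{a\in S^{d-1}} a^*Q^*P_\lambda Q\, a \;=\; \lambda_{\max}\!\bigl(Q^*P_\lambda Q\bigr)\;=\;\lambda_{\max}\!\Bigl((G^*G)^{-1/2}\, G^*P_\lambda G\, (G^*G)^{-1/2}\Bigr).
\end{equation*}

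Second, I would prove exponential concentration of the two $d\times d$ Wishart-type matrices that appear. Note that $G^*G$ is $\textrm{Wishart}(m,I_d)$ and $G^*P_\lambda G$ is $\textrm{Wishart}(\lfloor m\lambda\rfloor,I_d)$. For any fixed $a\in S^{d-1}$, the scalars $a^*(G^*G)a$ and $a^*(G^*P_\lambda G)a$ are sums of $m$ (resp.\ $\lfloor m\lambda\rfloor$) i.i.d.\ squared standard Gaussians, so by the standard $\chi^2$ Bernstein bound
\begin{equation*}
\prob\!\left\{\bigl|\tfrac{1}{m}a^*G^*G a-1\bigr|\ge \ve\right\}\le 2e^{-c_0 m\ve^2},\qquad
\prob\!\left\{\bigl|\tfrac{1}{m}a^*G^*P_\lambda G a-\lambda\bigr|\ge \ve\right\}\le 2e^{-c_0 m\ve^2},
\end{equation*}
for an absolute constant $c_0>0$. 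An $\ve$-net argument on $S^{d-1}$ (of cardinality $(3/\ve)^d$, which costs only a fixed factor since $d$ is constant) upgrades these pointwise bounds to an operator-norm bound: with probability at least $1-e^{-c_1 m\ve^2}$,
\begin{equation*}
\bigl\|\tfrac{1}{m}G^*G - I_d\bigr\|_{\textrm{op}}\le \ve,\qquad
\bigl\|\tfrac{1}{m}G^*P_\lambda G - \lambda I_d\bigr\|_{\textrm{op}}\le \ve.
\end{equation*}

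Third, I would combine these two estimates. On the high-probability event above (for $\ve$ small enough that $1-\ve\ge 1/2$), $(G^*G/m)^{-1/2}$ is well defined with $\|(G^*G/m)^{-1/2}-I_d\|_{\textrm{op}}\le C\ve$. A direct multiplicative computation then gives
\begin{equation*}
\bigl\|Q^*P_\lambda Q - \lambda I_d\bigr\|_{\textrm{op}}\le C'\ve,
\end{equation*}
so $\lambda_{\max}(Q^*P_\lambda Q)\in[\lambda-C'\ve,\lambda+C'\ve]$ and hence $|Z(\lambda)-\sqrt{\lambda}|\le C''\ve/\sqrt{\lambda}$. Choosing $\ve$ proportional to the target accuracy yields $\prob\{|Z(\lambda)-\sqrt{\lambda}|\ge \ve\}\le e^{-m\, c(\ve)}$ for all $m$ large enough, which is the claim.

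The only mildly delicate step is the third one, where one has to turn a bound on $G^*G/m$ into a bound on its inverse square root; this is handled by noting that on the event $\|G^*G/m-I_d\|\le \ve\le 1/2$ the matrix $(G^*G/m)^{-1/2}$ is given by a norm-convergent series in $I_d-G^*G/m$, so its distance from $I_d$ is $O(\ve)$. All other ingredients (rotational invariance, $\chi^2$ Bernstein bounds, and the net argument on the fixed-dimensional sphere) are standard, and no step introduces polynomial factors in $m$ that could spoil the exponential rate.
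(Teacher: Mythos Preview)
Your proposal is correct and follows the same overall architecture as the paper's proof: represent $V$ via a random $m\times d$ matrix, reduce to finitely many directions by an $\varepsilon$-net on $S^{d-1}$ (whose cardinality is a constant depending only on $d$ and $\varepsilon$), and then invoke a Gaussian-type concentration bound for each direction.

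The one genuine difference is in how the per-direction concentration is obtained. The paper works directly with a Haar-orthogonal matrix $Q$: for each fixed $u\in S^{d-1}$ the vector $Qu$ is exactly uniform on the unit sphere $S^{m-1}$, so L\'evy's concentration on the sphere immediately gives $\prob\{|\|P_\lambda Qu\|-\sqrt{\lambda}|\ge \varepsilon/2\}\le e^{-mc(\varepsilon)}$, and a union bound over the net finishes. You instead take the Gaussian realization $Q=G(G^*G)^{-1/2}$, control the two Wishart matrices $G^*G/m$ and $G^*P_\lambda G/m$ by $\chi^2$ tails plus a net, and then pass through the inverse square root. Both routes are standard; the paper's is a bit shorter because working with Haar-$Q$ from the start bypasses the third step of your argument (the perturbation bound for $(G^*G/m)^{-1/2}$) entirely, while your route has the minor advantage of making the exponent explicit ($c_0\varepsilon^2$) rather than appealing to abstract concentration on $S^{m-1}$.
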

\begin{proof}
Let $Q\in\reals^{m\times d}$ be a uniformly random orthogonal
matrix. Its image is a uniformly random subspace of $\reals^m$
whence the following equivalent characterization of $Z(\lambda)$
is obtained
\begin{eqnarray*}
Z(\lambda) \ed\sup\{ \|P_{\lambda}Qu\|\, :\; u\in S^d\}
\end{eqnarray*}
where $S^d\equiv\{x\in\reals^d\, :\, \|x\|=1\}$ is the $d$-dimensional
sphere, and $\ed$ denotes equality in distribution.

Let $N_d(\ve/2)$ be a $(\ve/2)$-net in $S_d$, i.e. a subset of
vectors $\{u^1,\dots,u^M\}\in S^d$ such that, for any $u\in S^d$,
there exists $i\in \{1,\dots,M\}$ such that $\|u-u^i\|\le \ve/2$.
It follows from a standard counting argument \cite{Ledoux} that there exists
an  $(\ve/2)$-net  of size $|N_d(\ve/2)|\equiv M\le (100/\ve)^d$.
Define
\begin{align*}
Z_{\ve/2}(\lambda) \equiv\sup\{ \|P_{\lambda}Qu\|\, :\; u\in N_d(\ve/2)\}\, .
\end{align*}
Since $u\mapsto P_{\lambda}Qu$ is Lipschitz with modulus $1$,
we have
\begin{align*}
\prob\{|Z(\kappa)-\sqrt{\lambda}|\ge \ve\}&\le
\prob\{|Z_{\ve/2}(\kappa)-\sqrt{\lambda}|\ge \ve/2 \}\\
&\le \sum_{i=1}^M\prob\{|\|P_{\lambda}Qu^i\|-\sqrt{\lambda}|\ge \ve/2 \}\, .
\end{align*}
But for each $i$,  $Qu^i$ is a uniformly random vector with norm $1$
in $\reals^m$. By concentration of measure in $S^m$
\cite{Ledoux}, there exists a function $c(\ve)>0$ such that,
for $x\in S^m$ uniformly random
\begin{align*}
\prob\big\{\big|\|P_{\lambda}x\|-\sqrt{\lambda}\big|\ge \ve/2 \big\}
\le e^{-m\, c(\ve)}\, .
\end{align*}
Therefore we get
\begin{align*}
\prob\{|Z(\kappa)-\sqrt{\lambda}|\ge \ve\}\le |N_{d}(\ve/2)|
 e^{-m\, c(\ve)}\le \Big(\frac{100}{\ve}\Big)^d\,  e^{-m\, c(\ve)}
\end{align*}
which is smaller than $e^{-mc(\ve)/2}$ for all $m$ large enough.
\end{proof}
%
%**************************************************************
%

\section{Useful reference material}

In this appendix we collect a few known results that are used
several times  in our proof. We also provide some pointers to
the literature.

\subsection{Equivalence of $\ell^2$ and $\ell^1$ norm on random vector spaces}

In our proof we make use of the following well-known result of Kashin in the theory of diameters of smooth functions \cite{Kashin1977}.
%Let $L_{n,\upsilon}=\{x\in\reals^n| x_i=0\,,~~\forall~i\ge n(1-\upsilon) + 1\}$.
\begin{theorem}[Kashin 1977]\label{thm:Kashin}
For any positive number $\upsilon$ there exist a universal constant $c_\upsilon$ such that for any
$n\ge 1$, with probability at least $1-2^{-n}$, for a uniformly random subspace $V_{n,\upsilon}$ of dimension $\lfloor n(1-\upsilon)\rfloor$,
\[
\forall~x\in V_{n,\upsilon}:~~~ c_\upsilon\|x\|_2\le \f{1}{\sqrt{n}}\|x\|_1\,.
\]
\end{theorem}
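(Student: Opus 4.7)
The plan is a covering argument on a random rotation of a fixed subspace, combined with concentration of measure on the Euclidean sphere. First I would realize the uniformly random subspace as $V_{n,\upsilon}\stackrel{d}{=}QL$, where $L\subseteq\reals^n$ is the fixed span of the first $d:=n(1-\upsilon)$ standard basis vectors and $Q$ is Haar-distributed on $O(n)$. By rotational invariance, for any fixed unit vector $u\in L$ the image $Qu$ is uniformly distributed on $S^{n-1}$, which lets me treat $V_{n,\upsilon}$ through properties of uniform vectors on the sphere.

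Next I would observe that $f(x):=\|x\|_1/\sqrt{n}$ is $1$-Lipschitz on $\reals^n$ in the Euclidean metric (by Cauchy--Schwarz, $\|x\|_1\le\sqrt{n}\|x\|_2$), and its mean over $S^{n-1}$ satisfies $m_n\to\sqrt{2/\pi}$. By Lévy's concentration inequality on the sphere,
\[
\prob\bigl(f(Qu)\le m_n-t\bigr)\le e^{-c_0 n t^2}
\]
for an absolute $c_0>0$. I would then pick an $\epsilon$-net $N_\epsilon$ on the unit sphere of $L$ with $|N_\epsilon|\le(3/\epsilon)^d$; a union bound gives
\[
\prob\bigl(\exists\, u\in N_\epsilon:\, f(Qu)\le m_n-t\bigr)\le (3/\epsilon)^{n(1-\upsilon)}e^{-c_0 n t^2}.
\]
On the complement, for any unit vector $v\in L$ one picks $u\in N_\epsilon$ with $\|v-u\|_2\le\epsilon$; since $Q$ is an isometry and $f$ is $1$-Lipschitz, $f(Qv)\ge m_n-t-\epsilon$. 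This yields $\|x\|_1\ge(m_n-t-\epsilon)\sqrt{n}\|x\|_2$ for every $x\in V_{n,\upsilon}$, which is the desired inequality with $c_\upsilon:=m_n-t-\epsilon$.

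The conclusion follows by choosing $t$ and $\epsilon$ as small positive constants depending only on $\upsilon$, so that $c_\upsilon>0$ for all sufficiently large $n$ while simultaneously $(1-\upsilon)\log(3/\epsilon)+\log 2<c_0 t^2$, which forces the failure probability to at most $2^{-n}$.

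The main obstacle is exactly this last balancing step. When $\upsilon$ is small, the net cardinality $(3/\epsilon)^{n(1-\upsilon)}$ grows aggressively in $1/\epsilon$, while the need to keep $c_\upsilon>0$ prevents $\epsilon$ (and hence $t$) from being taken too large; the crude Lévy constant $c_0$ is not automatically strong enough to dominate. Two standard remedies are available: either Kashin's iterative halving argument, which bypasses the covering step entirely and proceeds by peeling off coordinates where $x$ concentrates, or Gordon's ``escape through a mesh'' inequality applied to a Gaussian matrix $A\in\reals^{n\upsilon\times n}$ whose kernel has the same law as $V_{n,\upsilon}$, directly lower-bounding $\inf_{x\in\ker(A),\,\|x\|_2=1}\|x\|_1/\sqrt{n}$ by the Gaussian mean width of the $\ell_1$-ball minus a $\sqrt{n\upsilon}$ correction. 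Either route produces a constant $c_\upsilon>0$ together with probability at least $1-2^{-n}$ as required.
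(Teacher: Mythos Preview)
The paper does not prove this statement. Theorem~\ref{thm:Kashin} appears in the appendix ``Useful reference material'' and is simply quoted from Kashin's 1977 paper with a citation; there is no proof in the paper to compare against.

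On the merits of your sketch: the concentration-plus-net argument you outline is the standard warm-up, and you are right that it breaks down precisely where the theorem is interesting. When the subspace has dimension $n(1-\upsilon)$ with $\upsilon$ small, the net on $S^{d-1}$ has cardinality roughly $(3/\epsilon)^{n(1-\upsilon)}$, and to beat this with L\'evy concentration $e^{-c_0 n t^2}$ you would need $c_0 t^2 > (1-\upsilon)\log(3/\epsilon)$ while also keeping $t+\epsilon$ below the mean $m_n\approx\sqrt{2/\pi}$. For $\upsilon$ near zero there is no choice of $(t,\epsilon)$ that satisfies both constraints with the sharp L\'evy constant, so the argument as written only delivers the result for $\upsilon$ bounded away from $0$ (equivalently, subspace dimension a small constant fraction of $n$). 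You diagnose this correctly.

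Where the proposal falls short is that the two ``remedies'' you name---Kashin's iterative decomposition and Gordon's escape-through-a-mesh inequality---are not afterthoughts but the entire substance of the theorem. The point of Kashin's 1977 result is exactly that one can push the proportional-dimensional regime all the way up to $(1-\upsilon)n$ for any $\upsilon>0$, and the iterative halving (splitting coordinates where mass concentrates and recursing) is what makes this work. Citing it by name is not a proof. If you want to write an actual argument, either carry out the Kashin iteration explicitly, or set up Gordon's comparison inequality for $\inf_{x\in\ker(A),\,\|x\|_2=1}\|x\|_1$ with $A$ an $n\upsilon\times n$ Gaussian matrix and compute the relevant Gaussian widths; both are several pages, not a paragraph.
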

%
%**************************************************************
%
\subsection{Singular values of random matrices}

We will repeatedly make use of limit behavior of extreme singular values of
random matrices. A very general result was proved in \cite{BaiYin}
(see also \cite{BaiSilverstein}).
\begin{theorem}[\cite{BaiYin}]
\label{prop:marchenko-pastur}
Let $A\in\reals^{n\times N}$ be a matrix with i.i.d. entries
such that $\E\{A_{ij}\}=0$, $\E\{ A_{ij}^2\}=1/n$, and $n=N\delta$.
Let $\smaxA$ be the largest singular value of $A$, and
$\hsigmamin(A)$ be its smallest non-zero singular value.
Then
\begin{eqnarray}
\lim_{N\to\infty}\smaxA &\asequal &\f{1}{\sqrt{\delta}}+1\, ,\\
\lim_{N\to\infty}\hsigmamin(A)
&\asequal &\f{1}{\sqrt{\delta}}-1\, .
\end{eqnarray}
\end{theorem}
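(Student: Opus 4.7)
The plan is to work with the Wishart matrix $W = AA^\ast \in \reals^{n\times n}$, whose $n$ eigenvalues are the squared nonzero singular values of $A$. Writing $A_{ij} = n^{-1/2}\widetilde A_{ij}$ with $\widetilde A_{ij}$ i.i.d.\ of unit variance, one has $W = n^{-1}\widetilde A\widetilde A^\ast$, i.e.\ $W$ is $(1/\delta)$ times the standard sample covariance matrix $\widetilde A\widetilde A^\ast/N$. The two-part goal is (i) almost sure weak convergence of the empirical spectral distribution (ESD) of $W$ to the rescaled Marchenko--Pastur law on $[(\delta^{-1/2}-1)^2,(\delta^{-1/2}+1)^2]$, and then (ii) almost sure convergence of the extreme eigenvalues of $W$ to the endpoints of this support, which is what the theorem actually requires.

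For (i) I would apply the moment method, expanding $n^{-1}\E[\trace W^k]$ for fixed $k$ as a sum over closed bipartite walks of length $2k$ on $[n]\times[N]$. A standard graph-counting argument shows that only walks whose edges form a tree traversed in Dyck-path order survive in the limit, producing Narayana-number contributions whose generating function is the $k$-th moment of the rescaled Marchenko--Pastur law. A bounded-differences (or martingale) concentration for $n^{-1}\trace W^k$ combined with Borel--Cantelli upgrades convergence in expectation to almost sure weak convergence of the ESD. By lower semicontinuity of the support under weak convergence this already yields $\liminf \sigma_{\max}(A) \ge \delta^{-1/2}+1$ and $\limsup \sigma_{\min}(A) \le \delta^{-1/2}-1$ a.s.

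For (ii), the matching reverse inequalities, I would take $k=k_n\to\infty$ slowly (say $k_n\sim \log n$) and invoke Markov,
\[
\prob\big\{\sigma_{\max}(A)^2 \ge (\delta^{-1/2}+1)^2+\varepsilon\big\}
\le \frac{\E\trace W^{k_n}}{\big((\delta^{-1/2}+1)^2+\varepsilon\big)^{k_n}},
\]
reducing the job to the sharp high-moment bound $\E\trace W^{k_n}\le n\cdot (\delta^{-1/2}+1)^{2k_n}\cdot e^{o(k_n)}$, after which Borel--Cantelli concludes. For $\sigma_{\min}(A)$ the analogous argument runs on the resolvent: show $m_W(z)=n^{-1}\trace(W-zI)^{-1}$ converges a.s.\ to the Marchenko--Pastur Stieltjes transform uniformly on compact subsets of $\mathbb{C}$ bounded away from the support, which forbids any eigenvalue from escaping below $(\delta^{-1/2}-1)^2$.

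The main obstacle is the high-moment control in (ii). Letting $k$ grow with $n$, the trace expansion brings in moments $\E|\widetilde A_{ij}|^{2p}$ of order up to $p\sim k_n$, which need not be finite under the stated $\E\{A_{ij}^2\}=1/n$ hypothesis alone. The classical Bai--Yin resolution is a truncation step: replace each $\widetilde A_{ij}$ by its truncation at level $n^{1/4-\eta}$, use a fourth-moment Borel--Cantelli argument to show the truncation perturbs all singular values by $o(1)$ almost surely, and then perform the graph-counting estimate on the bounded entries, where the count of walks with $r$ repeated edges decays fast enough to absorb the $r$-th moment factors into the $e^{o(k_n)}$ error. Carrying this truncation through while preserving sharp edge constants---especially for the lower edge controlling $\sigma_{\min}$, which is the most delicate step---is the technical heart of the argument.
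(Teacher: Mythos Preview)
The paper does not prove this statement at all: it is quoted verbatim from \cite{BaiYin} in an appendix of ``useful reference material'' and is used as a black box throughout. So there is no proof in the paper to compare against, and your outline is essentially a sketch of the original Bai--Yin argument itself (moment method for the ESD, truncation plus growing-moment / combinatorial path counting for the edges), which is the standard route.

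One genuine technical point worth flagging: as you correctly sensed, the hypotheses stated in the paper (mean zero, variance $1/n$) are \emph{not} sufficient for the edge statements $\sigma_{\max}(A)\to \delta^{-1/2}+1$ and $\sigma_{\min}(A)\to |\delta^{-1/2}-1|$ almost surely. Bai and Yin require a finite fourth moment, and this is known to be sharp for the upper edge. Your truncation step at level $n^{1/4-\eta}$ and the ``fourth-moment Borel--Cantelli argument'' implicitly use this; without it the perturbation caused by truncation is not $o(1)$ a.s., and indeed $\sigma_{\max}(A)$ can diverge. The paper is not in trouble because it only applies the theorem to Gaussian matrices, but as written the theorem statement is slightly under-hypothesized, and your proof cannot close under those hypotheses alone.
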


We will also use the following fact that follows from
the standard singular value decomposition
\begin{eqnarray}
\min\big\{ \|Ax\|_2\, :\; x\in \ker(A)^\perp, \; \|x\|=1\big\}
= \sigma_{\rm min}(A)\, .
\end{eqnarray}
%
%
%**************************************************************
%
\subsection{Two Lemmas from \cite{BM-MPCS-2010}}
\label{sec:FromMPCS}

Our proof uses the results of \cite{BM-MPCS-2010}. We state
copy here the crucial technical lemma in that paper.
Notations refer to the general algorithm in Eq.~\eqref{eq:mpMain}.
General
state evolution defines quantities $\{\tau_t^2\}_{t\ge0}$ and $\{\sigma_t^2\}_{t\ge0}$ via
\begin{eqnarray}
\tau_{t}^2 = \E\big\{ g_t(\sigma_{t} Z,W)^2\big\}\, ,
\;\;\;\;\; \sigma_{t}^2 = \frac{1}{\delta}\,
\E\big\{ f_t(\tau_{t-1} Z,X_0)^2\big\}\, ,\label{eq:tau-sigma-recursion}
\end{eqnarray}
where $W\sim \pW$ and $X_0\sim p_{X_0}$ are independent of $Z\sim\normal(0,1)$
\begin{lemma}\label{lem:elephant}
Let $\{q_0(N)\}_{N\ge 0}$ and $\{A(N)\}_{N\ge 0}$ be, respectively,
a sequence of deterministic  initial conditions and a sequence of matrices $A\in\reals^{n\times N}$
indexed by $N$ with i.i.d. entries $A_{ij}\sim \normal(0,1/n)$.
Assume $n/N\to\delta\in (0,\infty)$. Consider deterministic sequences of vectors $\{x_0(N),w(N)\}_{N\ge 0}$, whose
empirical distributions converge weakly to  probability measures
$p_{X_0}$ and $\pW$ on $\reals$  with bounded $(2k-2)^{th}$ moment, and
assume:
\begin{itemize}
\item[(i)] $\lim_{N\to\infty}\E_{\empr_{x_0(N)}}(X_0^{2k-2})
=\E_{p_{X_0}}(X_0^{2k-2})<\infty$.
\item[(ii)] $\lim_{N\to\infty}\E_{\empr_{w(N)}}(W^{2k-2})=
\E_{\pW}(W^{2k-2})<\infty$.
\item[(iii)] $\lim_{N\to\infty} \E_{\empr_{q_0(N)}}( X^{2k-2})<\infty$.
\end{itemize}

Let $\{\sigma_t,\tau_t\}_{t\ge 0}$ be defined uniquely by the
recursion \eqref{eq:tau-sigma-recursion} with initialization
$\sigma_0^2=\delta^{-1}\lim_{n\to\infty}\<q^0,q^0\>$.  Then the following hold
for all $t\in\naturals\cup\{0\}$
\begin{itemize}
\item[$(a)$]
\begin{align}
h^{t+1}|_{\sigal{S}_{t+1,t}}&\deq \sum_{i=0}^{t-1}{\alpha_i}h^{i+1}+ {\tA}^*m_\perp^t+\tQ_{t+1}\order_{t+1}(1)\, ,\label{eq:main-lem-h-c}\\
b^t|_{\sigal{S}_{t,t}}&\deq \sum_{i=0}^{t-1}{\beta_i}b^{i} + {\tA}q_\perp^{t}+ \tM_t\order_t(1)\, ,\label{eq:main-lem-z-c}
\end{align}
where ${\tA}$ is an independent copy of $A$ and the matrix $\tQ_t$ ($\tM_t$) is such that its columns form an orthogonal basis for the column space of $Q_t$ ($M_t$) and $\tQ_t^*\tQ_t=N\,\identity_{t\times t}$ ($\tM_t^*\tM_t=n\,\identity_{t\times t}$).

\item[$(b)$] For all pseudo-Lipschitz functions $\phi_h,\phi_b:\reals^{t+2}\to\reals$ of order $k$
\begin{align}
\lim_{N\to\infty}\f{1}{N}\sum_{i=1}^N\phi_h(h_i^1,\ldots,h_i^{t+1},x_{0,i})
&\asequal\E\big\{\phi_h(\tau_0 Z_0,\ldots,\tau_tZ_{t},X_0)\big\}\, ,\label{eq:main-lem-h-a}\\
\lim_{n\to\infty}\f{1}{n}\sum_{i=1}^n\phi_b(b_i^0,\ldots,b_i^{t},w_i)
&\asequal\E\big\{\phi_b(\sigma_0\hat{Z}_0,\ldots,\sigma_t\hat{Z}_t,W)\big\}\,
,\label{eq:main-lem-z-a}
\end{align}
where $(Z_0,\ldots,Z_{t})$ and $(\hat{Z}_0,\ldots,\hat{Z}_{t})$
are two zero-mean gaussian vectors independent of $X_0$, $W$,
with $Z_i,\hat{Z}_i\sim \normal(0,1)$.

\item[$(c)$] For all $0\leq r,s\leq t$ the following equations hold and all limits exist, are bounded and have degenerate distribution
(i.e. they are constant random variables):
\begin{align}
\lim_{N\to\infty}\< h^{r+1},h^{s+1}\>&\asequal \lim_{n\to\infty}\< m^{r},m^{s}\>\, ,\label{eq:main-lem-h-b}\\
\lim_{n\to\infty}\< b^r,b^s\>&\asequal\f{1}{\delta}\lim_{N\to\infty}\< q^r,q^s\>\, .\label{eq:main-lem-z-b}
\end{align}

\item[$(d)$] For all $0\leq r,s\leq t$, and for any Lipschitz function
$\varphi:\reals^2\to\reals$ , the following equations hold and all limits exist, are bounded and have degenerate distribution (i.e. they are constant random variables):
\begin{align}
\lim_{N\to\infty}\< h^{r+1}, \varphi(h^{s+1},x_0)\>&\asequal \lim_{N\to\infty}\< h^{r+1},h^{s+1}\> \< \varphi'(h^{s+1},x_0)\>,\label{eq:main-lem-h-d}\\
\lim_{n\to\infty} \< b^{r},\varphi(b^s,w)\>&\asequal \lim_{n\to\infty}\< b^r,b^s\> \< \varphi'(b^s,w)\>\,. \label{eq:main-lem-z-d}
\end{align}
Here $\varphi'$ denotes derivative with respect to the first coordinate of $\varphi$.

\item[$(e)$] For $\ell = k-1$, the following hold
almost surely
\begin{align}
\lim\sup_{N\to\infty}\f{1}{N}\sum_{i=1}^N (h_i^{t+1})^{2\ell}&<\infty\,,\label{eq:main-lem-h-e}\\
\lim\sup_{n\to\infty}\f{1}{n}\sum_{i=1}^n (b_i^t)^{2\ell}&<\infty.\label{eq:main-lem-z-e}
\end{align}

\item[$(f)$] For all $0\leq r\le t$:
\begin{align}
\lim_{N\to\infty}\f{1}{N}\< h^{r+1},q^0\>&\asequal0\,.\label{eq:<ht,q0>=0}
\end{align}

\item[$(g)$] For all $0\leq r\leq t$ and $0\leq s \leq t-1$ the following limits exist, and there exist strictly positive constants $\lbq_r$ and $\lbm_s$ (independent of $N$, $n$) such that almost surely

\begin{align}
\lim_{N\to\infty}\< q^{r}_{\perp},q^{r}_{\perp}\>&>\lbq_r\,,\label{eq:main-lem-q-g}\\
\lim_{n\to\infty}\< m^{s}_\perp,m^{s}_\perp\>&>\lbm_s\,.\label{eq:main-lem-m-g}
\end{align}
\end{itemize}
\end{lemma}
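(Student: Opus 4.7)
The proof plan is to proceed by induction on $t$, exploiting the fact that $A$ is Gaussian and that the conditional distribution of a Gaussian given linear constraints is again Gaussian. At step $t$ we have accumulated the linear constraints $AQ_t = Y_t$ and $A^*M_t = X_t$, so by standard Gaussian conditioning
\[
A|_{\sigal{S}_t} \;\deq\; E_t + P_{M_t}^{\perp}\,\tA\, P_{Q_t}^{\perp},
\]
with $E_t$ the conditional mean given in \eqref{eq:Et} and $\tA\deq A$ independent of $\sigal{S}_t$. This representation is the single most important tool: it isolates a ``fresh'' Gaussian piece that appears at each iteration and drives all the asymptotic statements. Part $(a)$ follows directly by applying the representation to compute $h^{t+1} = x_0 - A^*z^t - x^t$ and $b^t = Aq^t - \lambda_t m^{t-1}$, and decomposing $q^t = q_{\perp}^t + P_{Q_t}q^t$ (and similarly for $m^t$); the component along previous iterates produces the linear combination $\sum_i \alpha_i h^{i+1}$ (resp.\ $\sum_i \beta_i b^i$), the orthogonal part contracted with $\tA^*$ (resp.\ $\tA$) supplies the fresh Gaussian, and the small error $\tQ_{t+1}\order_{t+1}(1)$ absorbs deviations coming from the fact that inner products of previous iterates converge to their limits only asymptotically.

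Given $(a)$, parts $(b)$--$(d)$ would be proved in a single inductive bundle. For $(b)$ I would condition on $\sigal{S}_t$ and use $(a)$ together with a standard Gaussian strong law: the vector $\tA^* m_\perp^t$ has i.i.d.\ $\normal(0, \|m_{\perp}^t\|^2/n)$ entries, so $N^{-1}\sum \phi_h(h_i^1,\ldots,h_i^{t+1},x_{0,i})$ concentrates around the expectation under the joint Gaussian $(\tau_0Z_0,\ldots,\tau_tZ_t)$, using the pseudo-Lipschitz-of-order-$k$ hypothesis to control tails via the moment bound $(e)$ at the previous step. For $(c)$, taking the specific choice $\phi_h(u,v)=uv$ (handled in the limit via a truncation argument since $uv$ is not pseudo-Lipschitz of order $2$ but is of order $k\geq 2$ by hypothesis) reduces the inner products to covariance computations. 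Part $(d)$ is obtained by Stein's identity: $\E\{Z_s\,\varphi(Z_r, X_0)\} = \E\{Z_sZ_r\}\,\E\{\varphi'(Z_r,X_0)\}$. Part $(e)$, which feeds back into $(b)$ for the next iteration, comes from the polynomial growth of $\phi_b,\phi_h$ and a direct moment bound on $\tA^* m^t_\perp$ using Burkholder/Rosenthal-type estimates. Part $(f)$ is the special case $r=0$ of an orthogonality relation that follows from the initialization $q^0 = -x_0$ together with the conditional mean computation.

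The main obstacle, and the subtle part of the induction, is part $(g)$, the lower bound $\lim\<q^r_\perp,q^r_\perp\> > \lbq_r > 0$ and analogously for $m^s_\perp$. Without this strict positivity, the conditioning representation in $(a)$ would degenerate (one would be dividing by a vanishing orthogonal component), and the fresh Gaussian piece would not in fact be fresh. To prove $(g)$, I would assume for contradiction that $q^r_\perp \to 0$; then $q^r$ lies asymptotically in the span of $q^0,\ldots,q^{r-1}$, so by $(b)$ with $\phi = f_r^2$, the corresponding covariance matrix of $(\tau_0 Z_0,\ldots,\tau_r Z_r)$ becomes singular. Using the explicit form $f_r(s,x_0) = \eta_{r-1}(x_0-s)-x_0$ and the strict contraction property $|\partial_1 \seF(\tau^2,\alpha\tau)|<1$ from Proposition \ref{propo:UniqFP}, one checks that the Jacobian of the state-evolution map in the extended $y=(y_1,y_2,y_3)$ coordinates (as in the proof of Lemma \ref{lemma:Tau_Plus}) has its diagonal block strictly less than $1$, which prevents the covariance matrix from becoming singular. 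This is the technical heart of the argument, and it is exactly where the assumption $\prob\{X_0\neq 0\}>0$ enters to rule out the degenerate case.

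Finally, the induction base $t=0$ is immediate: $h^1 = -A^*m^0 = -A^*g_0(-Aq^0,w)$, and since $A$ is unconditionally Gaussian, all the $t=0$ claims follow from standard Gaussian moment computations together with the assumed weak convergence of empirical distributions of $x_0(N)$, $w(N)$, and $q_0(N)$. The delicate points one must be careful about throughout are: the error terms $\tQ_{t+1}\order_{t+1}(1)$ must be shown to vanish in $L^2$ (proved by combining $(c)$ and $(g)$ at the previous step to invert $Q_t^*Q_t/N$ and $M_t^*M_t/n$ stably), and the non-pseudo-Lipschitz functions $\ind\{\cdot\}$ and $\eta'(\cdot;\theta)$ used later in the paper must be handled via the absolute continuity of the limit Gaussian law, as already noted after \eqref{eq:LimOmega_t}.
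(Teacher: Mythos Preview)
The paper does not prove this lemma at all: it is quoted verbatim from \cite{BM-MPCS-2010} as reference material (see the heading of Section~\ref{sec:FromMPCS}). So there is no ``paper's own proof'' to compare against. Your inductive scheme based on the Gaussian conditioning formula $A|_{\sigal{S}_t}\deq E_t+P_{M_t}^\perp\tA P_{Q_t}^\perp$ is indeed the backbone of the argument in \cite{BM-MPCS-2010}, and your treatment of parts $(a)$--$(e)$ is a reasonable sketch of that proof.

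There is, however, a genuine gap in your handling of part $(g)$. You propose to rule out $\lim\<q^r_\perp,q^r_\perp\>=0$ by invoking Proposition~\ref{propo:UniqFP} and the Jacobian computation in the proof of Lemma~\ref{lemma:Tau_Plus}. This is circular: Lemma~\ref{lemma:Tau_Plus} is proved in the present paper \emph{using} Lemma~\ref{lemma:PosDef}, which in turn is proved using Lemma~\ref{lemma:Svalues_t}, whose proof explicitly calls Lemma~\ref{lem:elephant}(g). So you are assuming what you set out to prove. Moreover, the contraction bound $|\de\seF/\de\tau^2|<1$ in Proposition~\ref{propo:UniqFP} holds only at the fixed point $\tau_*$, not at an arbitrary finite iterate, and the statement ``spectral radius of the Jacobian $<1$'' is about convergence of the recursion, not about nonsingularity of the finite-$t$ covariance matrix $\{\covz_{s,t}\}_{s,t<T}$; these are different assertions. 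Finally, Lemma~\ref{lem:elephant} is stated for general $f_t,g_t$, so an argument that hard-codes $f_t(s,x_0)=\eta_{t-1}(x_0-s)-x_0$ does not prove the lemma as stated.

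The correct route to $(g)$, as carried out in \cite{BM-MPCS-2010}, stays inside the induction and does not appeal to any fixed-point or long-time property. One uses part $(a)$ at step $t$ to write $h^{t+1}$ as a linear combination of $h^1,\dots,h^t$ plus the fresh piece $\tA^*m_\perp^t$; the induction hypothesis $(g)$ at step $t-1$ guarantees $\|m_\perp^t\|^2/n$ is bounded below, so this fresh Gaussian has strictly positive variance and is independent of the past. Then $q^{t+1}=f_{t+1}(h^{t+1},x_0)$ is a nonlinear function of a variable with a genuinely new Gaussian component, and a direct variance computation (using only that $f_{t+1}$ is not affine in its first argument) shows that $f_{t+1}(\tau_tZ_t,X_0)$ cannot lie almost surely in the span of $f_0,\dots,f_t$. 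This yields $\lbq_{t+1}>0$ without any reference to $\tau_*$ or to the later lemmas of the present paper.
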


It is also useful to recall some simple  properties of gaussian random
matrices.
\begin{lemma}\label{lem:prop-Gaussian-matrix} For any deterministic $u\in\reals^N$ and $v\in\reals^n$ with $\|u\|=\|v\|=1$ and a gaussian matrix ${\tA}$ distributed as $A$ we have
\begin{itemize}
\item[(a)] $v^* {\tA} u\deq Z/\sqrt{n}$ where $Z\sim \normal(0,1)$.

\item[(b)] $\lim_{n\to\infty} \|{\tA}u\|^2 = 1$ almost surely.

\item[(c)] Consider, for $d\le n$, a $d$-dimensional subspace $W$ of
$\reals^n$, an orthogonal basis $w_1,\ldots,w_d$ of $W$ with $\|w_i\|^2=n$ for $i=1,\ldots,d$, and the orthogonal projection $P_W$ onto $W$. Then for $D=[w_1|\ldots|w_d]$, we have $P_W A u\deq Dx$ with $x\in\reals^d$ that satisfies: $\lim_{n\to\infty}\|x\|\asequal 0$
(the limit being taken with $d$ fixed). Note that $x$ is $\order_d(1)$ as well.
\end{itemize}
\end{lemma}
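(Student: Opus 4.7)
The three statements all reduce to elementary Gaussian calculations; the only subtlety is upgrading convergence in probability to almost sure convergence. I will treat the three parts in order.

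For part (a), I would simply observe that $v^*\tA u = \sum_{i=1}^n\sum_{j=1}^N v_i\,\tA_{ij}\,u_j$ is a linear combination of i.i.d.\ $\normal(0,1/n)$ random variables, hence Gaussian with mean $0$ and variance $(1/n)\sum_{i,j}v_i^2u_j^2 = \|v\|^2\|u\|^2/n = 1/n$. This is the claim.

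For part (b), set $\xi_i \equiv (\tA u)_i = \sum_{j=1}^N \tA_{ij}u_j$. Since the rows of $\tA$ are independent and each $\xi_i$ is a linear combination of independent $\normal(0,1/n)$ entries with coefficients summing (in square) to $\|u\|^2=1$, the $\xi_i$ are i.i.d.\ $\normal(0,1/n)$. Then $n\xi_i^2$ are i.i.d.\ with mean $1$ and finite variance, so the strong law of large numbers yields $\|\tA u\|^2 = \sum_{i=1}^n \xi_i^2 = (1/n)\sum_{i=1}^n n\xi_i^2 \to 1$ almost surely.

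For part (c), since the columns of $D$ are orthogonal with $D^*D = nI_d$, the orthogonal projector onto $W$ is $P_W = DD^*/n$, so $P_W\tA u = Dx$ with $x_k = n^{-1} w_k^*\tA u$ for $k=1,\ldots,d$. By part (a), each $w_k^*\tA u = \|w_k\|\,(w_k/\|w_k\|)^*\tA u$ is distributed as $\sqrt{n}\cdot Z_k/\sqrt{n} = Z_k$ for some standard Gaussian $Z_k$ (not independent across $k$), so $x_k \ed Z_k/n$ and in particular $\E[x_k^{2p}] = C_p/n^{2p}$ for every integer $p$ and some constant $C_p$. Choosing $p=1$ and summing over the $d$ coordinates gives $\E\|x\|^2 = d/n^2 \to 0$, which already yields $\|x\|\to 0$ in $L^2$ and hence in probability. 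To upgrade to almost sure convergence (with $d$ fixed), I would take $p=2$: then $\prob(|x_k|>\eps) \le C_2/(n\eps)^4$ is summable in $n$, so by the Borel--Cantelli lemma each $x_k\to 0$ a.s., whence $\|x\|\to 0$ a.s.

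The main ``obstacle'' is really just being careful in part (c) about the distinction between convergence in distribution/probability and almost sure convergence; once moments are computed, Borel--Cantelli closes the gap. Everything else is a direct computation using Gaussianity and $\|u\|=\|v\|=1$.
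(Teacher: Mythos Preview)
The paper does not actually prove this lemma: it is stated without proof in the appendix as a result recalled from \cite{BM-MPCS-2010}, so there is no ``paper's own proof'' to compare against. Your argument is correct in substance and is the natural one; two small remarks.

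In part (b), invoking the strong law of large numbers is slightly informal: the $n\xi_i^2$ for different $n$ come from different matrices (a triangular array), not from a single infinite i.i.d.\ sequence. The clean fix is exactly the Borel--Cantelli route you already use in part (c): since $\|\tA u\|^2$ has the law of $n^{-1}\chi^2_n$, a fourth-moment (or sub-exponential concentration) bound gives tail probabilities summable in $n$ and hence almost sure convergence.

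In part (c), the $Z_k$ are in fact independent, not merely ``not independent across $k$'': since the $w_k$ are orthogonal and $\tA u$ is a Gaussian vector with covariance $(1/n)I_n$, the variables $w_k^*\tA u$ are jointly Gaussian with $\E[(w_k^*\tA u)(w_l^*\tA u)]=(1/n)w_k^*w_l=0$ for $k\neq l$. This only strengthens your conclusion. Note also that second moments already suffice here, since $\prob(\|x\|^2>\eps)\le \E\|x\|^2/\eps = d/(n^2\eps)$ is summable in $n$; the fourth-moment step is unnecessary but harmless.
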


%
%**************************************************************
%
\bibliographystyle{amsalpha}

\providecommand{\bysame}{\leavevmode\hbox to3em{\hrulefill}\thinspace}
\providecommand{\MR}{\relax\ifhmode\unskip\space\fi MR }
% \MRhref is called by the amsart/book/proc definition of \MR.
\providecommand{\MRhref}[2]{%
  \href{http://www.ams.org/mathscinet-getitem?mr=#1}{#2}
}
\providecommand{\href}[2]{#2}

\newpage

{\bf Mohsen Bayati} is an assistant professor of operations and information
technology at Stanford university Graduate School of Business.  Mohsen
received his PhD in Electrical Engineering from Stanford University in 2007.
His dissertation was on machine learning and modeling aspects of large-scale
networks. During the summers of 2005 and 2006 he interned at IBM Research
and Microsoft Research respectively.  He was a Postdoctoral Researcher with
Microsoft Research from 2007 to 2009 working mainly on applications of
machine learning and optimization methods in healthcare and online
advertising. In particular, he focused on hospital readmissions.  He has
been a Postdoctoral Scholar at Stanford University from 2009 to 2011 with a
research focus in high-dimensional statistical data-mining.

\vspace{1cm}

{\bf Andrea Montanari}  is an associate professor
in the Departments of Electrical Engineering and of Statistics, Stanford
University.
He received the Laurea degree in physics in 1997, and the
Ph.D. degree in theoretical physics in 2001, both from Scuola Normale Superiore,
Pisa, Italy.
He has been a Postdoctoral Fellow with the Laboratoire de Physique
Th\'eorique of Ecole Normale Sup\'erieure (LPTENS), Paris, France, and the
Mathematical Sciences Research Institute, Berkeley, CA. Since 2002, he has
been Charg\'e de Recherche (a research position with Centre National de la
Recherche Scientifique, CNRS) at LPTENS. In September 2006, he joined the
faculty of Stanford University.
Dr. Montanari was coawarded the ACM SIGMETRICS Best Paper Award in
2008. He received the CNRS Bronze Medal for Theoretical Physics in 2006 and
the National Science Foundation CAREER award in 2008.
His research focuses on algorithms on graphs, graphical models, statistical inference
and estimation.

\end{document}